\renewcommand\emptyset{\varnothing}
\renewcommand\P{P}
\renewcommand\k{\mathbf{k}}
\newcommand\bad{\mathcal{B}}
\newcommand\ord{\mathcal{O}}
\newcommand\ordC{\mathcal{L}}
\newcommand\T{\mathcal{T}}
\newcommand\I{I_\bullet}
\newcommand\precC{\prec\mathrel\cdot}
\renewcommand\l{\lambda}
\newcommand\Z{\mathbb{Z}}
\newcommand\R{\mathbb{R}}               
\newcommand\nR{\R_{\ge0}}
\newcommand\Q{\mathcal{Q}}
\newcommand\Qess{\Q_{\mathrm{ess}}}
\newcommand\GT{\mathrm{GT}}
\newcommand\OGT{\mathcal{GT}}
\newcommand\Ehr{\mathsf{Ehr}}
\renewcommand\int{\mathsf{int}}
\newcommand\set[2]{\left\{ {#1} \ : \ {#2} \right\}}
\newcommand\tset[2]{\{ {#1} \ : \ {#2} \}}
\newcommand\Emph[1]{\textbf{#1}}
\DeclareMathOperator*\conv{conv}
\DeclareMathOperator\Ext{Ext}
\DeclareMathOperator*{\codim}{codim}
\DeclareMathOperator*{\relint}{relint}
\DeclareMathOperator*{\dd}{dd}
\newtheorem{thm}{Theorem}[section]
\newtheorem{cor}[thm]{Corollary}
\newtheorem{lem}[thm]{Lemma}
\newtheorem{prop}[thm]{Proposition}
\newtheorem{quest}{Question}
\theoremstyle{definition}
\newtheorem{example}[thm]{Example}
\newtheorem{obs}{Observation}
\title[Arithmetic of marked order polytopes]{Arithmetic of marked order
polytopes, monotone triangle reciprocity, and partial colorings}
\author{Katharina Jochemko}
\author{Raman Sanyal}
\address{Fachbereich Mathematik und Informatik, %
Freie Universit\"at Berlin, %
Germany}
\email{\{jochemko,sanyal\}@math.fu-berlin.de}
\keywords{partially ordered sets, order preserving maps, order polytopes,
piecewise polynomials, monotone triangles, partial graph colorings}
\subjclass[2010]{06A07, 06A11, 52B12, 52B20}
\date{\today}
\thanks{KJ was supported by a \emph{Hilda Geiringer Scholarship} at
the Berlin Mathematical School. 
RS has been supported by
the European Research Council under the European Union's Seventh Framework
Programme (FP7/2007-2013) / ERC grant agreement n$^\mathrm{o}$ 247029.}
\begin{document}

\begin{abstract}
    For a pair of posets $A \subseteq P$ and an order preserving map $\l : A
    \rightarrow \R$, the marked order polytope parametrizes the order
    preserving extensions of $\l$ to $P$. We show that the function counting
    integral-valued extensions is a piecewise polynomial in $\l$ and we prove
    a reciprocity statement in terms of order-reversing maps.  We apply our
    results to give a geometric proof of a combinatorial reciprocity for
    monotone triangles due to Fischer and Riegler (2011) and we consider the
    enumerative problem of counting extensions of partial graph colorings
    of Herzberg and Murty (2007).
\end{abstract}

\maketitle

\section{Introduction}

Partially ordered sets, or \Emph{posets} for short, are among the most
fundamental objects in combinatorics. For a finite poset $P$,
Stanley~\cite{stanley70} considered the problem of counting (strictly) order
preserving maps from $P$ into $n$-chains and showed that many problems in
combinatorics can be cast into this form. Here, a map $\l : P \rightarrow [n]$
into the $n$-chain is \Emph{order preserving} if $\l(p) \le \l(q)$ whenever $p
\prec_P q$ and the inequality is strict for \Emph{strict} order preservation.
In \cite{stanley70} it is shown that the number of order preserving maps into
a chain of length $n$ is given by a polynomial $\Omega_P(n)$ in the positive
integer $n$ and the number of strictly order preserving maps is related to
$\Omega_P(n)$ by a combinatorial reciprocity (see
Section~\ref{ssec:reciprocity}). 

In this paper we consider the problem of counting the number of order
preserving extensions of a map $\l : A \rightarrow \Z$ from a subposet $A
\subseteq P$ to $P$. Clearly, this number is finite only when $A$ comprises
all minimal and maximal elements of $P$ and we tacitly assume this throughout.
It is also obvious that no extension exists unless $\l$ is order preserving
for $A$ and we define $\Omega_{P,A}(\l)$ as the number of order preserving
maps $\widehat{\l} : P \rightarrow \Z$ such that $\widehat{\l}|_A = \l$. By
adjoining a minimum and maximum to $P$ it is seen that $\Omega_{P,A}(\l)$
generalizes the order polynomial.

The function $\Omega_P(n)$ can be studied from a geometric perspective by
relating it to the Ehrhart function of the \Emph{order
polytope}~\cite{stanley86}, the set of order preserving maps $P \rightarrow
[0,1]$.  The finiteness of $P$ asserts that this is indeed a convex polytope
in the finite dimensional real vector space $\R^P$. The order polytope is a
lattice polytope whose facial structure is intimately related to the structure
of $P$ and which has a canonical unimodular triangulation again described in
terms of the combinatorics of $P$. Standard facts from Ehrhart theory (see for
example~\cite{BR}) then assert that $\Omega_P(n)$ is a polynomial of degree
$|P|$. We pursue this geometric route and study the \Emph{marked order
polytope}
\[
    \ord_{P,A}(\l) \ = \ \set{ \ \widehat{\l}: P \rightarrow \R \text{ order
    preserving } }{
    \widehat{\l}(a) = \l(a) \text{ for all } a \in A \ } \ \subset \ \R^P\,.
\]
Marked order polytopes were considered (and named) by Ardila, Bliem, and
Salazar~\cite{ABS} in connection with representation theory. In the case that
$A$ is a chain, the polytopes already appear in~\cite{stanley81}; see
Section~\ref{ssec:chains}.  The set $\ord_{P,A}(\l)$ defines a polyhedron for
any choice of $A \subseteq P$ but it is a polytope precisely when $\min(P)
\cup \max(P) \subseteq A$. It follows that $\Omega_{P,A}(\l) =
\#(\ord_{P,A}(\l) \cap \Z^P)$. In Section~\ref{sec:mop} we elaborate on the
geometric-combinatorial properties of $\ord_{P,A}(\l)$ and we show that
$\Omega_{P,A}(\l)$ is a piecewise polynomial over the space of integral-valued
order preserving maps $\l : A \rightarrow \Z$. We give an explicit description
of the polyhedral domains for which $\Omega_{P,A}(\l)$ is a polynomial and we
give a combinatorial reciprocity for $\Omega_{P,A}(-\l)$. We close by
``transferring'' our results to the marked chain polytopes of~\cite{ABS}.

In Section~\ref{sec:MT}, we use our results to give a geometric
interpretation of a combinatorial reciprocity for \Emph{monotone triangles}
that was recently described by Fischer and Riegler~\cite{FR}. A monotone
triangle is a triangular array of numbers such as
\begin{equation}\label{eqn:MTexample}
\begin{array}{ccccccccc}
          & &   & & 5 & &    & &  \\ 
          & &   &4&   &5&    & &  \\ 
          & & 3 & & 5 & & 7  & &  \\
          &3&   &4&   &7&    &8&  \\
        1 & & 4 & & 6 & & 8 & & 9
    \end{array}
\end{equation}
with fixed bottom row such that the entries along the directions $\searrow$
and $\nearrow$ are weakly increasing and strictly increasing in direction
$\rightarrow$; a more formal treatment is deferred to Section~\ref{sec:MT}.
Monotone triangles arose initially in connection with alternating sign
matrices~\cite{MRR} and a significant amount of work regarding their
enumerative behavior was done in~\cite{fischer06}. In particular, it was shown
that the number of monotone triangles is a polynomial in the strictly
increasing bottom row. In~\cite{FR} a (signed) interpretation is given for the
evaluation of this polynomial at weakly decreasing arguments in terms of
\Emph{decreasing monotone triangles}. In our language, monotone triangles are
extensions of order preserving maps over posets know as \Emph{Gelfand-Tsetlin
patterns} plus some extra conditions.  These extra conditions can be
interpreted as excluding the lattice points in a natural subcomplex of the
boundary of $\ord_{P,A}(\l)$. We investigate the combinatorics of this
subcomplex and give a geometric interpretation for the combinatorial
reciprocity of monotone triangles.

Finally, a well-known result of Stanley~\cite{stanley73} gives a combinatorial
interpretation for the evaluation of the chromatic polynomial $\chi_G(t)$
of a graph $G$ at negative integers in terms of acyclic orientations. We give
a combinatorial reciprocity for the situation of counting extensions of
partial colorings which was considered by Herzberg and Murty~\cite{herzberg}. 

\textbf{Acknowledgments.} This project grew out of the course ``Combinatorial
Reciprocity Theorems'' given by the second author in the winter term 2011/12
at FU Berlin. We would like to thank the participants Katharina M\"olter and
Tran Manh Tuan as well as Thomas Bliem for helpful discussions.

\section{Marked order polytopes}\label{sec:mop}

Marked order polytopes as defined in the introduction naturally arise as
sections of a polyhedral cone, the \Emph{order cone}, which parametrizes order
preserving maps from a finite poset $P$ to $\R$. The order cone is the
``cone-analog'' of the order polytope which was thoroughly studied
in~\cite{stanley86} and whose main geometric results we reproduce before
turning to marked order polytopes.  We freely make use of concepts from
polyhedral geometry as can be found, for example, in~\cite{ziegler}. For a
finite set $S$ we identify $\R^S$ with the vector space of real-valued
functions $S \rightarrow \R$.

\subsection{Order cones}
The order cone is the set $\ordC(P)
\subseteq \R^P$ of order preserving maps from $P$ into $\R$
\[
    \ordC(P) \ = \ \{ \phi \in \R^P \;:\; \phi(p)  \ \le \ \phi(q) \text{ for
    all } p \preceq_P q\}.
\]
This is a closed convex cone and the finiteness of $P$ ensures that $\ordC(P)$
is polyhedral (i.e.\ bounded by finitely many halfspaces). The cone is not
pointed and the lineality space of $\ordC(P)$ is spanned by the indicator
functions of the connected components of $P$. Said differently, the largest
linear subspace contained in $\ordC(P)$ is spanned by the functions  $\chi : P
\rightarrow \{0,1\}$ that satisfy $\chi(p) = \chi(q)$ whenever there is a
sequence $p = p_0p_1\dots p_{k-1}p_k = q$ such that $p_ip_{i+1}$ are
comparable in $P$.

The cone $\ordC(P) \subseteq \R^P$ is of full dimension $|P|$ and its facet
defining equations are given by $\phi(p) = \phi(q)$ for every cover
relation $p \precC_P q$. Every face $F \subseteq \ordC(P)$ gives rise to a
subposet $G(F)$ of $P$ whose Hasse diagram is given by those $p \precC_P q$
for which  $\phi(p) = \phi(q)$  for all $ \phi \in F$.  Such a subposet $G(F)$
arising from a face $F \subseteq \ordC(P)$ is called a \Emph{face partition}.
The following characterization of face partitions is taken
from~\cite{stanley86}.
\begin{prop} \label{prop:face_graph}
    A subposet $G \subseteq P$ is a face partition if and only if for every $p,q
    \in G$ with $p \preceq_G q$ we have $[p,q]_P \subseteq G$.
\end{prop}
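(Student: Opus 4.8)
The plan is to study a face $F$ through a point $\phi$ in its relative interior: by the facet description of $\ordC(P)$, a cover relation $p \precC_P q$ lies in the Hasse diagram of $G(F)$ exactly when $\phi(p)=\phi(q)$, and two elements of $G(F)$ lie in a common connected component (a ``block'') precisely when they are joined by a chain of such tight covers. The single observation driving both directions is that for order preserving $\phi$, if $a \preceq_P b$ and $\phi(a)=\phi(b)$, then $\phi$ is constant along every saturated chain from $a$ to $b$; hence every cover on such a chain is tight and $a,b$ lie in a common block.

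For the forward implication I would fix $\phi \in \relint F$ and take $p \preceq_{G(F)} q$. A chain of tight covers from $p$ to $q$ gives $\phi(p)=\phi(q)$, and for any $r \in [p,q]_P$ order preservation forces $\phi(p) \le \phi(r) \le \phi(q)$, so $\phi(r)=\phi(p)=\phi(q)$. Applying the observation to $p \preceq_P r$ and to $r \preceq_P q$ shows that $r$ is joined to $p$ by tight covers, hence lies in the block of $p$; thus $r \in G(F)$ with $p \preceq_{G(F)} r \preceq_{G(F)} q$, and therefore $[p,q]_P \subseteq G(F)$.

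For the converse I would start from a $G$ satisfying the interval condition and first record that its cover relations are exactly the covers $p \precC_P q$ of $P$ with both endpoints in $G$: one inclusion is immediate, and the interval condition forbids a $P$-element strictly between the endpoints of a $G$-cover. I then contract each connected component of $G$ to a point, leave the elements of $P \setminus G$ as singletons, and equip the resulting set with the quotient preorder induced by $\preceq_P$. Choosing any strictly order preserving map from this quotient into $\R$ and composing with the quotient map produces $\phi \in \ordC(P)$ whose tight covers are precisely the $P$-covers with both endpoints in the same block, that is, the cover relations of $G$; taking $F$ to be the smallest face containing $\phi$ then yields $G(F)=G$.

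The step I expect to be the main obstacle is verifying that this quotient is genuinely a partial order, since only then does the required strictly order preserving map exist. By transitivity it suffices to exclude a two-cycle between distinct classes, and here the interval condition is exactly what is needed. A two-cycle between a block $B$ and an outside singleton $\{x\}$ yields $a,b \in B$ with $a \preceq_P x \preceq_P b$, so $x \in [a,b]_P \subseteq G$, contradicting $x \notin G$; and a two-cycle between two blocks yields $a,b \in G$ with $a \preceq_P b$, so $a \preceq_G b$ for the induced order, whence the interval condition places a whole saturated chain from $a$ to $b$ inside $G$ and forces the two blocks to coincide. The remaining identification of the vertex set of $G(F)$ with that of $G$ is routine bookkeeping once the convention for isolated blocks is fixed.
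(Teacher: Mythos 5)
A preliminary remark: the paper does not prove this proposition at all --- it is quoted verbatim from Stanley's \emph{Two poset polytopes} --- so there is no in-paper argument to compare against and your proof has to stand on its own. Your forward direction does: for $\phi \in \relint F$ the inequalities tight at $\phi$ are exactly those tight on all of $F$, and the squeeze $\phi(p)\le\phi(r)\le\phi(q)$ combined with constancy along saturated chains places every $r\in[p,q]_P$ in the block of $p$ with $p\preceq_{G(F)}r\preceq_{G(F)}q$. That half is correct.

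The converse has a genuine gap, and it sits precisely in the step you label ``immediate'': the claim that every cover $p\precC_P q$ with both endpoints in $G$ is a cover relation of $G$. Your $\phi$ is constant on each connected component of $G$ and strictly increasing between distinct classes, so its tight covers are \emph{all} $P$-covers whose endpoints lie in a common component; for $G(F)=G$ you need these to be exactly the Hasse edges of $G$, and that is the non-immediate inclusion. The interval hypothesis, read literally, only constrains pairs already comparable in $G$ and says nothing about a $P$-cover joining two elements of the same component that are not $G$-comparable. Concretely, let $P=\{a,b,c,d\}$ with the four covers $a\precC_P c$, $a\precC_P d$, $b\precC_P c$, $b\precC_P d$, and let $G$ be generated by the three covers $ac$, $bc$, $bd$. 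Every interval $[p,q]_P$ with $p\preceq_G q$ is a two-element set contained in $G$, so the hypothesis holds; yet $G$ is connected, so your $\phi$ is constant on $\{a,b,c,d\}$ and is also tight on $a\precC_P d$, whence the face you produce has face partition strictly larger than $G$ (indeed no face of $\ordC(P)$ has $G(F)=G$). To close the argument you must either derive from the interval condition that $G$ contains every $P$-cover internal to one of its components --- which is exactly where you would need $\preceq_G$ to be the induced order, a convention you invoke in the two-cycle argument (``so $a\preceq_G b$ for the induced order'') but which is inconsistent with the forward direction, where $\preceq_{G(F)}$ is generated by the tight covers --- or work throughout with the partition into connected components rather than with $G$ itself, which is Stanley's formulation (connected blocks plus acyclicity of the quotient preorder, the version the paper records in the sentence following the proposition). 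Separately, your reduction of antisymmetry to two-cycles treats a two-cycle in the transitive closure as if it consisted of two generator relations; this is repairable by passing to a minimal directed cycle of generators, but as written it is another loose end.
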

Equivalently, the directed graph obtained from the Hasse diagram of $P$ by
contracting the cover relations in $G$ is an acyclic graph and, after removing
transitive edges, is the Hasse diagram of a poset that we denote by $P/G$.
Note that $G$ is typically not a connected poset.  The face corresponding to
such a graph $G$ is then
\[
    F_P(G) \ = \ \set{ \phi \in \ordC(P)}{ \phi \text{ is constant on every
    connected component of $G$ } }
\]
and $F_P(G)$ is isomorphic to $\ordC(P/G)$ by a linear and lattice preserving
map.

The order cone has a canonical subdivision into unimodular cones that stems
from refinements of $P$ induced by elements of $\ordC(P)$. To describe the
constituents of the subdivision, recall that $I \subseteq P$ is an \Emph{order
ideal} if $p \preceq_P q$ and $q \in I$ implies $p \in I$. Let $\phi \in
\ordC(P)$ be an order preserving map with range $\phi(P) = \{ t_0 < t_1 <
\cdots < t_k \}$. Then $\phi$ induces a chain of order ideals 
\[
    \I^P \ : \ 
    I_0 \ \subsetneqq \ 
    I_1 \ \subsetneqq \ 
    I_2 \ \subsetneqq \ 
    \cdots \ \subsetneqq \ 
    I_k \ = \ P 
\]
by setting $I_j = \set{ p \in P }{\phi(p) \le t_j}$. If the poset $P$ is clear
from the context, we drop the superscript and simply write $\I$.
Conversely, a given chain
of order ideals $\I$ is induced by $\phi \in \ordC(P)$ if and only if $\phi$
is constant on $I_{j} \setminus I_{j-1}$ for $j=0,1,\dots,k$ (with $I_{-1} =
\emptyset$) and
\[
    \phi(I_0) \ < \
    \phi(I_1\setminus I_0) \ < \
    \phi(I_2\setminus I_1) \ < \
    \cdots  \ < \
    \phi(I_k\setminus I_{k-1}).
\]
This defines the relative interior of a $(k+1)$-dimensional simplicial cone in
$\ordC(P)$ whose closure we denote by $F(\I)$. Chains of order ideals are
ordered by refinement and the maximal elements correspond to saturated chains
of order ideals or, equivalently, \Emph{linear extensions} of $P$. For a
saturated chain $\I$, we have $I_{j} \setminus I_{j-1} = \{ p_j \}$ for
$j=0,1,\dots,m = |P|-1$  and $p_i \prec_P p_j$ implies $i < j$. In this case
\[
    F(\I) \ = \ \set{ \phi \in \R^P}{ 
    \phi(p_0) \ \le \
    \phi(p_1) \ \le \
    \cdots \ \le \
    \phi(p_{m-1}) }.
\]
Modulo lineality space, this is a unimodular simplicial cone spanned by the
characteristic functions $\phi^0,\phi^1,\dots,\phi^{m-1} : P \rightarrow \{0,1\}$ with
$\phi^k(p_j) = 1$ iff $j \ge k$. Faces of $F(\I)$ correspond to the
coarsenings of $\I$ and since every $\phi \in \ordC(P)$ induces a unique $\I =
\I(\phi)$, this proves the following result which was first shown by
Stanley~\cite{stanley86} for the order polytope $\ordC(P) \cap [0,1]^P$.

\begin{prop}
    Let $P$ be a finite poset.  Then 
    \[
    \T_\P \ = \  \set{ F(\I^\P) }{ \I^\P \text{ chain of order ideals in $\P$
        } } 
    \]
    is a subdivision of $\ordC(P)$ into unimodular simplicial cones.
\end{prop}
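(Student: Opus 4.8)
The plan is to verify that $\T_\P$ satisfies the three defining properties of a polyhedral subdivision of $\ordC(P)$: every member is a cone contained in $\ordC(P)$ (which holds by construction) and their union is all of $\ordC(P)$; the collection is closed under passing to faces; and any two members meet in a common face. Simpliciality and unimodularity will then follow almost for free from the description of the top-dimensional cones already recorded above.

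First I would isolate the two facts that make everything run: that $\phi \in F(\I)$ holds exactly when the induced chain $\I(\phi)$ is a coarsening of $\I$, and consequently that $\relint F(\I) = \set{\phi \in \ordC(P)}{\I(\phi) = \I}$. The first is immediate from the closed description of $F(\I)$: a map that is constant on the blocks $I_j \setminus I_{j-1}$ and weakly increasing along them has, as its own ideals $\set{p}{\phi(p)\le t}$, exactly unions of consecutive blocks, each of which is some $I_j$; hence $\I(\phi)$ arises from $\I$ by deleting ideals. Granting this, since every $\phi \in \ordC(P)$ induces a unique chain $\I(\phi)$, the relative interiors $\relint F(\I)$ are pairwise disjoint and cover $\ordC(P)$, so that $\bigcup_\I F(\I) = \ordC(P)$, giving the covering property. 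Closure under faces is precisely the statement that the faces of $F(\I)$ are the $F(\I')$ for coarsenings $\I'$ of $\I$, which was observed above.

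The technical heart is the common-face property. Given two chains $\I$ and $\I'$, I would show $F(\I) \cap F(\I') = F(\I \wedge \I')$, where $\I \wedge \I'$ is the chain consisting of the order ideals appearing in both $\I$ and $\I'$. One first notes that these common ideals are pairwise nested (each occurs in a single chain, so any two are comparable) and that both lists terminate in $P$, so they genuinely form a chain of order ideals. A map $\phi$ then lies in $F(\I) \cap F(\I')$ if and only if $\I(\phi)$ coarsens both $\I$ and $\I'$, that is, if and only if the ideals of $\I(\phi)$ are drawn from $\{I_j\} \cap \{I'_j\}$, which is exactly the condition $\phi \in F(\I\wedge\I')$. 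Since $\I\wedge\I'$ is a coarsening of each of $\I$ and $\I'$, the cone $F(\I\wedge\I')$ is a face of both, so the intersection is a common face.

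Finally I would settle simpliciality and unimodularity. Any chain of order ideals refines to a saturated one by inserting ideals one element at a time, so every $F(\I)$ is a face of some $F(\I^{\mathrm{sat}})$ with $\I^{\mathrm{sat}}$ a linear extension of $P$; the latter cones were already shown to be unimodular simplicial modulo the lineality space, and faces of simplicial (resp.\ unimodular) cones are again simplicial (resp.\ unimodular). Hence every member of $\T_\P$ is unimodular simplicial, completing the argument. The one step demanding genuine care is the intersection identity $F(\I)\cap F(\I') = F(\I\wedge\I')$, and in particular checking that the shared ideals form a chain indexing exactly the common face; everything else reduces to the bijection $\phi \leftrightarrow \I(\phi)$ and the face--coarsening correspondence supplied above.
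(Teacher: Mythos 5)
Your proof is correct and follows essentially the same route as the paper, which deduces the result in one line from the two facts you isolate: that every $\phi\in\ordC(P)$ induces a unique chain $\I(\phi)$ (so the relative interiors $\relint F(\I)$ partition $\ordC(P)$) and that faces of $F(\I)$ correspond to coarsenings of $\I$. The only addition is your explicit verification of the common-face property via $F(\I)\cap F(\I')=F(\I\wedge\I')$, which the paper leaves implicit; that check is carried out correctly.
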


\subsection{Marked order polytopes} \label{ssec:mop}
Now let $A \subseteq P$ be a subposet such that $\min(P) \cup \max(P)
\subseteq A$. For an order preserving map $\l : A \rightarrow \R$, the
marked order polytope 
\[
    \ord_{P,A}(\l) \ = \ \set{ \widehat{\l} \in \ordC(P) }{ \widehat{\l}(a) =
    \l(a) \text{ for all } a \in A} \ = \ \ordC(P) \ \cap \ \Ext_{P,A}(\l)
\]
is the intersection of the order cone with the affine space $\Ext_{P,A}(\l) =
\tset{ \widehat{\l} \in \R^P }{\widehat{\l}|_A = \l }$. Every face of
$\ord_{P,A}(\l)$ is a section of a face $H$ of $\ordC(P)$ with
$\Ext_{P,A}(\l)$ and is itself a marked order polytope. We denote the
dependence of $H$ on $\l$ by $H(\l)$.
We can describe them in terms of face partitions.
\begin{prop}\label{prop:compat_face_graph}
    Let $G$ be a face partition of $P$ and let $\l : A \rightarrow \R$ be an
    order preserving map for $A \subseteq P$. Then  $\Ext_{P,A}(\l)$ meets
    $F_P(G)$ in the relative interior if and only if the following holds for
    all $a,b \in A$: Let
    $G_a,G_b \subseteq P$ be the connected components of $G$ containing $a$ and $b$,
    respectively.
    \begin{enumerate}[\normalfont i)]
        \item If $\l(a) < \l(b)$ then
            \[
                \bigcup_{p \in G_a} P_{\preceq p} \ \cap \
                \bigcup_{q \in G_b} P_{\succeq q}  \ = \ \emptyset.
                \]
        \item If $\l(a) = \l(b)$ and $a$ and $b$ are comparable, then $G_a =
            G_b.$ 
    \end{enumerate}
    In this case, $F_P(G) \cap \Ext_{P,A}(\l)$ is linearly isomorphic to
    $\ord_{P/G,A/G}(\l_G)$ where $\l_G : A/G \rightarrow \R$ is the
    well-defined map on the quotient.
\end{prop}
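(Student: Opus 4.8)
The plan is to pass to the quotient poset $P/G$ and reduce the statement to extending a partial labeling to a strictly order preserving map. Recall from the discussion preceding the proposition that $\phi \mapsto \bar\phi$, where $\bar\phi$ is the induced map on $P/G$, identifies $F_P(G)$ linearly and lattice preservingly with $\ordC(P/G)$; since the facets of $\ordC(P/G)$ are exactly the cover relations, $\phi$ lies in the relative interior of $F_P(G)$ precisely when $\bar\phi$ is \emph{strictly} order preserving, i.e. $\bar\phi(C) < \bar\phi(C')$ whenever $C \prec_{P/G} C'$. Hence $\Ext_{P,A}(\l)$ meets $F_P(G)$ in its relative interior if and only if there is a strictly order preserving $\psi : P/G \to \R$ whose value on the class of $a$ equals $\l(a)$ for every $a \in A$. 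Such a $\psi$ can only exist if $\l$ is constant on the marked elements of each class; in that case $\l$ descends to a well-defined $\l_G : A/G \to \R$, and the isomorphism carries $F_P(G) \cap \Ext_{P,A}(\l)$ onto $\{\psi \in \ordC(P/G) : \psi|_{A/G} = \l_G\} = \ord_{P/G,A/G}(\l_G)$, which already gives the final assertion.

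For necessity I would argue directly. Suppose $\phi \in \Ext_{P,A}(\l)$ lies in the relative interior of $F_P(G)$, so $\phi$ is constant on $G_a, G_b$ with values $\l(a), \l(b)$ and $\phi(u) < \phi(v)$ whenever $[u] \prec_{P/G} [v]$. For (i): if $\l(a) < \l(b)$ but the intersection were nonempty, there would be $q \in G_b$, $p \in G_a$ and $x$ with $q \preceq_P x \preceq_P p$, whence $\l(b) = \phi(q) \le \phi(p) = \l(a)$, a contradiction. For (ii): if $\l(a) = \l(b)$ and $a,b$ are comparable, say $a \prec_P b$, then $G_a \preceq_{P/G} G_b$; were the classes distinct we would get $\phi(a) < \phi(b)$, contradicting $\l(a)=\l(b)$, so $G_a = G_b$.

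For sufficiency I would show (i) and (ii) produce the strict extension $\psi$. Condition (i) encodes two things: taking $a,b$ in a common class (where the intersection is automatically nonempty, as a connected class of size $\ge 2$ has comparable elements) forces $\l(a)=\l(b)$, so $\l_G$ is well defined; and for distinct comparable classes $G_a \prec_{P/G} G_b$ the intersection for the pair $(b,a)$ is nonempty, forcing $\l(a) \le \l(b)$, i.e. $\l_G$ is weakly monotone. Condition (ii) is then meant to upgrade this to strict monotonicity by forbidding two distinct comparable classes from carrying the same value. Once $\l_G$ is strictly monotone on $A/G$, a greedy assignment along a linear extension of $P/G$—using the density of $\R$ to place each new value strictly between its already-fixed marked lower and upper neighbours—yields a strictly order preserving $\psi$ extending $\l_G$, and lifting $\psi$ through the isomorphism gives the desired relative-interior point.

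The step I expect to be the main obstacle is exactly the passage from (ii) to strict monotonicity of $\l_G$. The difficulty is that comparability of two classes $G_a \prec_{P/G} G_b$ is witnessed by \emph{some} pair $p \in G_a$, $q \in G_b$ with $p \prec_P q$, while (ii) constrains the marked elements, and the marked representatives of the two classes need not themselves be comparable in $P$. To bridge this I would exploit the standing hypothesis $\min(P) \cup \max(P) \subseteq A$ together with the convexity of the $G$-classes guaranteed by Proposition~\ref{prop:face_graph}: from a witnessing pair $p \prec_P q$ one passes down to a minimal and up to a maximal element of $P$, both forced to lie in $A$, and uses convexity of the classes to keep the comparison inside $G_a$ and $G_b$, thereby exhibiting marked elements of the two classes with equal values and violating (ii). Carrying out this extraction carefully—and verifying that the minimal and maximal elements one produces actually stay in the intended classes—is the delicate heart of the argument.
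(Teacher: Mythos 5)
Your overall route is the same as the paper's: identify $F_P(G)$ with $\ordC(P/G)$, observe that meeting the relative interior of $F_P(G)$ is equivalent to the existence of a strictly order preserving $\psi : P/G \rightarrow \R$ extending the induced marking, and reduce everything to well-definedness and strict monotonicity of $\l_G$ on $A/G$. Your necessity argument and the identification of $F_P(G) \cap \Ext_{P,A}(\l)$ with $\ord_{P/G,A/G}(\l_G)$ are fine, and are if anything more explicit than the paper's proof, which disposes of the whole proposition in three sentences.

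The step you single out as the delicate heart is, however, a genuine gap, and the repair you sketch does not close it. Passing from a witnessing pair $p \prec_P q$ (with $p \in G_a$, $q \in G_b$) down to a minimal and up to a maximal element of $P$ does land you in $A$, but there is no reason for those elements to lie in $G_a$ or $G_b$: convexity of the classes controls the interval $[u,v]_P$ only when $u \preceq_G v$, which is exactly what fails across the two classes. In fact, with (ii) read literally (comparability of $a$ and $b$ in $P$) the equivalence is false. Take $P = \{\hat 0, a, b, p, q, \hat 1\}$ with cover relations $\hat 0 \precC a$, $\hat 0 \precC b$, $a \precC p$, $p \precC q$, $b \precC q$, $q \precC \hat 1$; let $A = \{\hat 0, a, b, \hat 1\}$ with $\l(\hat 0)=0$, $\l(a)=\l(b)=1$, $\l(\hat 1)=2$, and let $G$ have Hasse diagram $\{a \precC p\} \cup \{b \precC q\}$. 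This $G$ is a face partition, and conditions (i) and (ii) hold --- here $a$ and $b$ are incomparable in $P$, so (ii) is vacuous for that pair --- yet every $\phi \in F_P(G) \cap \Ext_{P,A}(\l)$ satisfies $\phi(p)=\phi(q)=1$ and so fails strictness on the cover $p \precC q \notin G$; the affine space does not meet $\relint F_P(G)$. The statement only becomes true, and your greedy construction then finishes the sufficiency direction with no detour through $\min(P)$ and $\max(P)$, if (ii) is read as a condition on comparability of the classes $G_a$ and $G_b$ in $P/G$ (equivalently, of some element of $G_a$ with some element of $G_b$). The paper's own proof simply asserts that (ii) yields strictness without addressing this point, so you have located a real imprecision rather than missed a trick; but as written your argument does not establish strict monotonicity of $\l_G$, and the proposed min/max extraction cannot be made to work.
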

\begin{proof}
    Let $P/G$ be the quotient poset associated to the face partition $G$.  The
    quotient $A/G$ is a subposet of $P/G$ and $\l_G : A/G \rightarrow \R$ is a
    well-defined map if condition i) holds. Moreover, the induced map $\l_G$
    is order preserving for $A/G$ if condition i) holds, and in fact strictly
    if ii) holds. Thus $F_P(G) \cap \Ext_{P,A}(\l)$ is linearly isomorphic to
    $\ord_{P/G,A/G}(\l_G)$ which is of maximal dimension.
\end{proof}

We call a face partition \Emph{compatible} with $\l$ if it satisfies the
conditions above. In particular, taking the intersection of all compatible
face partitions of $P$, we obtain $\ord_{P,A}(\l)$ as an improper face.

\begin{cor} \label{cor:dim}
    Let $A \subseteq P$ be a pair of posets and $\l : A \rightarrow \R$ an
    order preserving map. Then $\ord_{P,A}(\l)$ is a
    convex polytope of dimension 
    \[
        \dim \ord_{P,A}(\l) \ = \ \left|
         P \setminus \set{ p \in P }{ a \preceq p \preceq b \text{ for
        } a,b \in A \text{ with } \l(a) = \l(b) } \right|.
    \]
\end{cor}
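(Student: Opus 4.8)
The plan is to squeeze $\dim\ord_{P,A}(\l)$ between matching upper and lower bounds equal to $|P\setminus S|$, where $S$ denotes the set $\set{p\in P}{a\preceq p\preceq b\text{ for some }a,b\in A\text{ with }\l(a)=\l(b)}$ subtracted off in the statement. The workhorse is the pair of envelopes
\[
  \ell(p)\ =\ \max\set{\l(a)}{a\in A\cap P_{\preceq p}},\qquad
  u(p)\ =\ \min\set{\l(b)}{b\in A\cap P_{\succeq p}}.
\]
Since $\min(P)\cup\max(P)\subseteq A$, every $p$ has a marked element below and above it, so $\ell$ and $u$ are well defined; monotonicity of $p\mapsto A\cap P_{\preceq p}$ and $p\mapsto A\cap P_{\succeq p}$ shows that $\ell$ and $u$ are themselves order preserving. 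Every extension $\widehat\l\in\ord_{P,A}(\l)$ satisfies $\ell(p)\le\widehat\l(p)\le u(p)$, which simultaneously proves boundedness (hence that $\ord_{P,A}(\l)$ is a polytope) and, together with the easy equivalence $p\in S\iff\ell(p)=u(p)$, forces $\widehat\l(p)=\ell(p)=u(p)$ for every $p\in S$. The polytope therefore lies in the affine subspace cut out by fixing the $|S|$ coordinates indexed by $S$, giving $\dim\ord_{P,A}(\l)\le|P\setminus S|$.

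For the reverse inequality I would exhibit a single extension at which all $|P\setminus S|$ free coordinates may be perturbed independently. The obvious candidate is the midpoint $\widehat\l^\circ=\tfrac12(\ell+u)$: it is order preserving, restricts to $\l$ on $A$, and lies strictly between the envelopes on $P\setminus S$. Unfortunately it need not sit in the relative interior, since a cover $p\precC q$ with $\ell(p)=\ell(q)$ and $u(p)=u(q)$ is satisfied with equality. This is exactly the obstacle, and I would remove it by a perturbation that strictifies every cover having a free endpoint while leaving the pinned coordinates untouched.

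Concretely, fix a linear extension of $P$ with strictly order-preserving rank function $r:P\to\nZ$, set $\eta(p)=r(p)$ for $p\notin S$ and $\eta(p)=0$ for $p\in S$, and consider $\widehat\l_\delta=\widehat\l^\circ+\delta\eta$ for small $\delta>0$. A short case analysis over the three types of cover $p\precC q$ (both endpoints in $S$, exactly one in $S$, neither in $S$) shows that $\widehat\l_\delta$ is still an order-preserving extension and that every cover with at least one endpoint in $P\setminus S$ now holds strictly; the key point is that $\eta$ vanishes on $S$, so the pinned coordinates and the values on $A$ are unchanged. At such a point, for each free $p$ the only defining inequalities touching the coordinate $\widehat\l(p)$ are the covers at $p$, all of which have slack, so the segment $\widehat\l_\delta\pm t\,e_p$ stays in $\ord_{P,A}(\l)$ for small $t$. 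These $|P\setminus S|$ linearly independent directions yield $\dim\ord_{P,A}(\l)\ge|P\setminus S|$, matching the upper bound.

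The main obstacle is thus the lower bound, and more precisely the construction of a genuine relative-interior point: the naive midpoint collapses certain covers, and the repair is the rank perturbation $\delta\eta$ chosen to be zero on $S$ and strictly increasing along covers elsewhere. All remaining work is the cover-by-cover verification that $\widehat\l_\delta$ remains order preserving, which is routine once $\eta$ is set up this way.
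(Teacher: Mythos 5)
Your proof is correct, but the lower bound is obtained by a genuinely different route than the paper's. The upper bound is the same idea in both (the coordinates indexed by $S$ are already determined by $\l$), though your envelopes $\ell,u$ make the pinning explicit; the key facts you use --- that $\ell$ and $u$ are order preserving extensions of $\l$ with $\ell\le u$, that $p\in S$ iff $\ell(p)=u(p)$, and that a cover with exactly one endpoint in $S$ is already strict at the midpoint $\tfrac12(\ell+u)$ --- all check out. For the lower bound the paper simply invokes Lemma~\ref{lem:prod_simpl}: a compatible chain of order ideals yields a cell $F(\I)\cap\Ext_{P,A}(\l)$ that is a product of simplices, and a suitably fine compatible chain realizes dimension $|P\setminus S|$. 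You instead construct an explicit relative-interior point by perturbing the midpoint with a rank function supported off $S$, and then read off $|P\setminus S|$ independent perturbation directions. Your version is self-contained and arguably more transparent: the paper's one-line appeal to Lemma~\ref{lem:prod_simpl} is a forward reference and leaves to the reader the verification that a compatible chain of the right fineness exists; your argument also identifies the affine hull exactly (coordinates on $S$ frozen at the common value $\ell=u$) and hands over a relative-interior point, which is what Theorem~\ref{thm:reciprocity} implicitly relies on. What the paper's route buys is economy: the product-of-simplices cells are needed anyway for Theorem~\ref{thm:piecew_poly}, so the dimension count comes for free inside that machinery.
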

\begin{proof}
    The presentation  as the affine section of a cone marks $\ord_{P,A}(\l)$
    as a convex polyhedron. As every element of $P$ has by assumption a lower
    and upper bound in $A$, it follows that $\ord_{P,A}(\l)$ is a polytope. 
    The right-hand side is exactly the number of elements of $P$ whose values
    are not yet determined by $\l$ and $\ord_{P,A}(\l)$ has at most this
    dimension. On the other hand, Lemma~\ref{lem:prod_simpl} shows the
    existence of a subpolytope of exactly this dimension.
\end{proof}

\subsection{Induced subdivisions and arithmetic}

Intersecting every cell of the canonical subdivision $\T_P$ of $\ordC(P)$ with
the affine space $\Ext_{P,A}(\l)$ induces a subdivision of $\ord_{P,A}(\l)$
that we can explicitly describe. To describe the cells in the intersection,
let $\I$ be a chain of order ideals of $P$. For $a \in P$ we denote by
$i(\I,a)$ the smallest index $j$ for which $a \in I_j$.  We call a chain of
order ideals $\I$ of $P$ \Emph{compatible} with $\l$ if
\[
    i(\I,a) < i(\I,b) \quad\text{ if and only if }\quad \l(a) < \l(b)
\]
for all $a,b \in A$. The crucial observation is that $\relint F(\I)\cap\Ext_{P,A}(\l)$ is not empty iff $\I$ is compatible with $\l$ and in this case $F(\I) \cap
\Ext_{P,A}(\l)$ is of a particularly nice form.

\begin{lem}\label{lem:prod_simpl}
    Let $A \subseteq P$ be a pair of posets and $\l : A \rightarrow \R$ an
    order preserving map. If $\I$ is a chain of order ideals of $P$ compatible
    with $\l$, then the induced cell $F(\I) \cap \Ext_{P,A}(\l)$ is a
    Cartesian product of simplices.
\end{lem}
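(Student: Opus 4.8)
The plan is to work in the block coordinates supplied by the chain $\I$. Recall that every $\phi \in F(\I)$ is constant on each block $B_j := I_j \setminus I_{j-1}$ (with $I_{-1} = \emptyset$), so that writing $t_j := \phi(B_j)$ identifies $F(\I)$, modulo its lineality, with the simplicial cone $\{(t_0,\dots,t_k) : t_0 \le t_1 \le \cdots \le t_k\}$. First I would record that intersecting with $\Ext_{P,A}(\l)$ simply freezes some of these coordinates: since each $a \in A$ lies in the block $B_{i(\I,a)}$, the equation $\widehat{\l}(a) = \l(a)$ reads $t_{i(\I,a)} = \l(a)$.

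The role of compatibility is to make this slicing well posed. Let $S := \{i(\I,a) : a \in A\} \subseteq \{0,\dots,k\}$. If $i(\I,a) = i(\I,b)$ then, by compatibility, neither $\l(a) < \l(b)$ nor $\l(b) < \l(a)$ can hold, so $\l(a) = \l(b)$; hence for each $j \in S$ there is a well-defined constant $c_j$ with $t_j = c_j$, and compatibility further yields $c_j < c_{j'}$ whenever $j < j'$ in $S$. Moreover, because $\min(P) \cup \max(P) \subseteq A$, the bottom block $B_0 = I_0$ contains a minimal element of $P$ and the top block $B_k = P \setminus I_{k-1}$ contains a maximal element of $P$, both lying in $A$; thus $0, k \in S$. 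This is exactly where the hypothesis on $A$ enters, and it is what will keep the forthcoming factors bounded.

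The heart of the argument is then a decoupling. Writing $S = \{s_0 = 0 < s_1 < \cdots < s_r = k\}$, the constraints $t_0 \le \cdots \le t_k$ together with the frozen values $t_{s_i} = c_{s_i}$ split into independent blocks of inequalities, one per consecutive pair, namely $c_{s_i} \le t_{s_i+1} \le \cdots \le t_{s_{i+1}-1} \le c_{s_{i+1}}$. Since the free coordinates appearing in different blocks are disjoint, $F(\I) \cap \Ext_{P,A}(\l)$ is linearly isomorphic to the Cartesian product over $i = 0, \dots, r-1$ of the polytopes $\Delta_i := \{(t_{s_i+1},\dots,t_{s_{i+1}-1}) : c_{s_i} \le t_{s_i+1} \le \cdots \le t_{s_{i+1}-1} \le c_{s_{i+1}}\}$ (a point when $s_{i+1} = s_i + 1$). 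It then remains only to identify each $\Delta_i$ as a simplex: the set of monotone sequences caught between two fixed real values is, up to an affine rescaling, the order polytope of a chain, hence a (unimodular) simplex; equivalently, $\Delta_i$ is a bounded, full-dimensional polytope in its coordinate space cut out by exactly one more facet than its dimension.

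The main obstacle I anticipate is not a single computation but keeping the two uses of compatibility cleanly separated: one use guarantees that the frozen values $c_j$ are consistently defined on each block, while the other, together with $0, k \in S$, guarantees that every factor $\Delta_i$ is a genuine bounded simplex rather than an unbounded or degenerate cell. Once these are in place, the product decomposition and the simplex identification are routine.
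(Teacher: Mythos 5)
Your proof is correct and takes essentially the same route as the paper's: both pass to the block coordinates $t_j = \phi(I_j\setminus I_{j-1})$, use compatibility to freeze the coordinates indexed by $A$ at consistent, strictly increasing values, and decouple the remaining chain of inequalities into independent factors, each a simplex of monotone sequences between two fixed endpoints. Your explicit justification that $0,k \in S$ via $\min(P)\cup\max(P)\subseteq A$ (ensuring boundedness of the extreme factors) is left implicit in the paper's assertion $0 = i_0 < \cdots < i_r = k$, but is the same underlying fact.
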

\begin{proof}
    Let $\l(A) = \{ t_0  <  t_1 < \dots < t_r \}$ be the range of $\l$ and
    pick elements $a_0,a_1,\dots, a_r \in A$ with $\l(a_i) = t_i$. 
    Let $i_j = i(\I,a_j)$ for $j = 0,1,\dots,r$ and,  
    since $\I$ is compatible with $\l$, we have $0 = i_0 < i_1 < \cdots < i_r =
    k$. It follows that 
    $F(\I) \cap \Ext_{P,A}(\l)$ is the set of all $\phi \in \R^P$ such that
    $\phi$ is constant on $I_h \setminus I_{h-1}$ for $h=0,1,\dots,k$ (with
    $I_{-1}=\emptyset$) and
    \[
    \newcommand{\verteq}[0]{\begin{turn}{90} $=$\end{turn}}
    \begin{array}{ccccccccccccccc}
        \phi(I_0) & \le &
        \phi(I_1 \setminus I_0) & \le &
        \cdots & \le &
        \phi(I_{i_1} \setminus I_{i_{1}-1}) & \le & 
        \phi(I_{i_1+1} \setminus I_{i_{1}}) & \le & 
        \cdots & \le & 
        \phi(I_k \setminus I_{k-1})\\[5pt]
        \verteq && && && \verteq && && && \verteq\\
        \l(a_0) && && && \l(a_1) && && && \l(a_r)\\
    \end{array}
    \]
    Thus, $F(\I)\cap \Ext_{P,A}(\l)$ is linearly isomorphic to 
    $F_0 \times F_1 \times \cdots \times F_{r-1}$ where, by setting \mbox{$s_j =
    \phi(I_j \setminus I_{j-1})$,}
    \begin{equation}\label{eqn:ord_simplex}
    F_j \ = \ \{ \ 
    \l(a_{j}) \ \le \
    s_{i_{j}+1} \ \le \ 
    s_{i_{j}+2} \ \le \ 
    \cdots  \ \le \ 
    s_{i_{j+1}-1} \ \le \ \l(a_{j+1}) \ \}.
\end{equation}
is a simplex of dimension $d_j = i_{j+1} - i_j-1$.
\end{proof}

Thus the canonical subdivision of $\ordC(P)$ induces a subdivision of
$\ord_{P,A}(\l)$ into products of simplices indexed by compatible chains of
order ideals. This is the key observation for the following result. 

\begin{thm}\label{thm:piecew_poly}
    Let $A \subseteq P$ be a pair of posets with $\min(P) \cup \max(P)
    \subseteq A$. For integral-valued order preserving maps $\l : A \rightarrow \Z$, the function
    \[
        \Omega_{P,A}(\l) \ = \ |\ord_{P,A}(\l) \cap \Z^P|
    \]
    is a piecewise polynomial over the order cone $\ordC(A)$. The cells of the
    canonical subdivision of $\ordC(A)$ refine the domains of polynomiality of
    $\Omega_{P,A}(\l)$. In other words, $\Omega_{P,A}(\l)$ is a polynomial
    restricted to any cell $F(\I^A)$ of the subdivision of $\ordC(A)$.
\end{thm}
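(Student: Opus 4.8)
The plan is to read off the lattice point count from the induced subdivision of $\ord_{P,A}(\l)$ provided by Lemma~\ref{lem:prod_simpl}, and to observe that the combinatorial type of this subdivision is constant on each relatively open cell of $\ordC(A)$. Since the relative interiors of the cells $F(\I)$ partition $\ordC(P)$, intersecting with the affine space $\Ext_{P,A}(\l)$ partitions $\ord_{P,A}(\l)$ into the pieces $\relint F(\I)\cap\Ext_{P,A}(\l)$, which are nonempty precisely for the chains $\I$ compatible with $\l$. First I would therefore write
\[
    \Omega_{P,A}(\l) \ = \ \sum_{\I \text{ compatible with } \l}
    \bigl|\, \relint F(\I) \cap \Ext_{P,A}(\l) \cap \Z^P \,\bigr| ,
\]
so that every lattice point is accounted for exactly once.

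Next I would evaluate each summand. By Lemma~\ref{lem:prod_simpl} the cell $F(\I)\cap\Ext_{P,A}(\l)$ is the product of simplices $F_0\times\cdots\times F_{r-1}$ of \eqref{eqn:ord_simplex}, and its relative interior is the product of the relative interiors, on which all inequalities in \eqref{eqn:ord_simplex} become strict. Counting strictly increasing integer sequences of length $d_j = i_{j+1}-i_j-1$ strictly between the integers $\l(a_j)$ and $\l(a_{j+1})$ gives
\[
    \bigl|\, \relint F(\I) \cap \Ext_{P,A}(\l) \cap \Z^P \,\bigr|
    \ = \ \prod_{j=0}^{r-1} \binom{\l(a_{j+1}) - \l(a_j) - 1}{d_j} ,
\]
a product of polynomials in the differences of marking values, hence a polynomial in $\l$. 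The compatibility condition $i(\I,a) < i(\I,b) \iff \l(a) < \l(b)$ depends only on the ordered set partition of $A$ induced by $\l$, which is recorded exactly by the chain of order ideals $\I^A$ whose cell $F(\I^A)$ contains $\l$ in its relative interior. Thus on $\relint F(\I^A)$ the index set of compatible chains $\I$ is constant, the displayed sum is one fixed polynomial $p_{\I^A}(\l)$, and the canonical subdivision of $\ordC(A)$ refines the domains of polynomiality as claimed.

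The main obstacle is to upgrade this from the relative interior to the closed cell $F(\I^A)$. A proper face of $F(\I^A)$ is a cell $F(\mathcal{J})$ for a coarsening $\mathcal{J}$ of $\I^A$ in which some consecutive blocks have merged; there the true count $\Omega_{P,A}$ is governed by the strictly smaller family of chains compatible with $\mathcal{J}$, and the binomial factors attached to the merged blocks lose their enumerative meaning, since the argument $\l(a_{j+1})-\l(a_j)-1$ may turn negative. What remains is to show that the fixed polynomial $p_{\I^A}$ nonetheless agrees with $\Omega_{P,A}$ on $F(\mathcal{J})$. I would prove this by grouping the chains $\I$ compatible with $\I^A$ according to the chain they collapse to once the blocks of $\mathcal{J}$ are merged: over each collapsed chain the binomial products of its preimages sum, by iterated Vandermonde convolution $\sum_i \binom{x}{i}\binom{y}{n-i} = \binom{x+y}{n}$, to the single binomial product that the interior count assigns to that collapsed chain for $\mathcal{J}$. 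Hence $p_{\I^A}$ restricts on $F(\mathcal{J})$ to the polynomial $p_{\mathcal{J}}$, which by the relative-interior case applied to the cell $F(\mathcal{J})$ equals $\Omega_{P,A}$ on $\relint F(\mathcal{J})$. Since the closed cell $F(\I^A)$ is the disjoint union of the relative interiors of its faces, $p_{\I^A} = \Omega_{P,A}$ throughout $F(\I^A)$, completing the argument.
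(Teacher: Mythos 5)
Your main decomposition is correct and is a genuine (if mild) variant of the paper's argument. The paper counts lattice points in the \emph{closed} cells $F(\I)\cap\Ext_{P,A}(\l)$, obtaining $\prod_j\binom{\l(a_{j+1})-\l(a_j)+d_j}{d_j}$, and then removes the overcounting by M\"obius inversion on the face lattice of the induced subdivision; you instead sum the counts of the \emph{relatively open} cells, which partition $\ord_{P,A}(\l)$, so no inversion is needed. Your interior count $\binom{\l(a_{j+1})-\l(a_j)-1}{d_j}$ is right, and the observation that the set of compatible chains is constant on $\relint F(\I^A)$ is exactly the paper's closing remark. Up to this point your argument buys a slightly cleaner bookkeeping at no cost, and it establishes everything the paper's own proof establishes, namely polynomiality on the relative interior of each cell.

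The place where you go beyond the paper is the closure step, and there your justification does not hold up as stated. Fix a face $F(\mathcal{J})$ of $F(\I^A)$ obtained by merging the blocks of $a_j$ and $a_{j+1}$, i.e.\ by specializing $\l(a_{j+1})=\l(a_j)$. Within the fiber of chains $\I$ collapsing to a fixed chain $\mathcal{K}$ compatible with $\mathcal{J}$, all blocks outside the merged range are unchanged, so every factor of the product except the $j$-th is constant on the fiber; the varying factor $\binom{\l(a_{j+1})-\l(a_j)-1}{d_j}$ specializes to $\binom{-1}{d_j}=(-1)^{d_j}$. What you must therefore prove is the alternating-sum identity $\sum_{\I\to\mathcal{K}}(-1)^{d_j(\I)}=1$, the sum running over the refinements of the merged block having $a_j$ (and its value class) in the bottom part and $a_{j+1}$ (and its value class) in the top part. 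This is not a Vandermonde convolution --- there is no splitting of an upper index across two binomials --- but an Euler-characteristic/M\"obius computation: the refinements in the fiber index the cells of the canonical subdivision of a marked order polytope of the merged block, $d_j$ is the dimension of the corresponding cell, and $\sum_{\text{cells}}(-1)^{\dim}=1$ because that polytope is contractible. The step is true and your grouping is the right one, but if you try to execute it via $\sum_i\binom{x}{i}\binom{y}{n-i}=\binom{x+y}{n}$ you will find the sum simply does not have that shape; replace the citation of Vandermonde by the Euler relation for polyhedral subdivisions (or by a M\"obius-function argument on the refinement poset) and the proof is complete. For what it is worth, the paper itself never addresses agreement on closed cells, so your attempt to do so is a genuine strengthening once repaired.
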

\begin{proof}
    Lemma~\ref{lem:prod_simpl} shows that for fixed $\l : A
    \rightarrow \Z$ every maximal cell in
    the induced subdivision of $\ord_{P,A}(\l)$ is a product of simplices and
    the proof actually shows that, after taking successive differences, the
    simplices $F_j$ of \eqref{eqn:ord_simplex} are lattice isomorphic to 
    \begin{equation}\label{eqn:std_simplex}
        (\l(a_{j+1}) - \l(a_{j}))\cdot 
        \Delta_{d_j} \ = \ \bigl\{ y \in \nR^{d_j} \;:\; y_1 \ + \ y_2 \ + \
        \cdots \ + \ y_{d_j} \ \le \ 
        \l(a_{j+1}) - \l(a_{j})
         \bigr\}.
    \end{equation}
    Elementary counting then shows that
    \begin{equation}\label{eqn:prod_ehr}
|F(\I)\cap\Ext_{P,A}(\l) \cap \Z^P| \quad = \quad
    \prod_{j=0}^{r-1} |F_j \cap \Z^P| \quad =  \quad
    \prod_{j=0}^{r-1}
    \binom{\l(a_{j+1}) -
    \l(a_j) + d_j}{d_j}
    \end{equation}
    which is a polynomial in
    $\l$ of  degree $d_0+d_1+\cdots+d_{r-1} = \dim F(\I)\cap\Ext_{P,A}(\l)$.
    M\"obius inversion on the face lattice of the induced
    subdivision shows that $\Omega_{P,A}$ is the evaluation of a polynomial at
    the given $\l$. 
    To complete the proof, note that $\l, \l^\prime : A \rightarrow \R$ have
    the same collections of compatible chains of order ideals whenever $\l,
    \l^\prime \in \relint C$ for some cell $C$ in the canonical subdivision  $\T_A$ of
    $\ordC(A)$.
\end{proof}

A weaker version of Theorem~\ref{thm:piecew_poly} can also be derived from the
theory of partition functions~\cite[Ch.~13]{DCP}. It can be seen that over
$\ordC(A)$, the marked order polytope is of the form
\[
    \ord_{P,A}(\l) \ = \ \set{ x \in \R^n }{ Bx \ \le \ c(\l) }
\]
where $B \in \Z^{M \times n}$ is a fixed matrix with $n = |P|$ and $c : \R^A
\rightarrow \R^M$ is an affine map. Moreover, $B$ is unimodular. It follows
from the theory of partition functions that the function $\Phi_B : \Z^M
\rightarrow \Z$ given by
\[
    g \ \mapsto \ \#\{ x \in \Z^n : Bx \le g \}
\]
is a piecewise polynomial over the cone $C_B \subset \R^M$ of (real-valued)
$g$ such that the polytope above is non-empty. The domains of polynomiality
are given by the \Emph{type cones} for $B$; see McMullen~\cite{mcmullen73}.
Consequently, we have $\Omega_{P,A}(\l) = \Phi_B(c(\l))$. It follows that
$\ordC(A)$ is linearly isomorphic to a section of $C_B$ and the canonical
subdivision $\T_A$ is a refinement of the induced subdivision by type cones.
It is generally difficult to give an explicit description of the subdivision
of $C_B$ by type, not to mention the sections of type cones by the image of
$c(\l)$. So, an additional benefit of the proof presented here is the explicit
description of the domains of polynomiality.

In the context of representation theory, the lattice points of certain marked
order polytopes bijectively correspond to bases elements of irreducible
representations; cf.~the discussion in~\cite{ABS,bliem}. Bliem~\cite{bliem}
used partition functions of \emph{chopped and sliced cones} to show that in
the marking $\l$, the dimension of the corresponding irreducible
representation is given by a piecewise quasipolynomial.
Theorem~\ref{thm:piecew_poly} strengthens his result to piecewise polynomials.
Bliem~\cite[Warning~1]{bliem} remarks that his `regions of
quasi-polynomiality' might be too fine in the sense that the quasi-polynomials
for adjacent regions might coincide. This also happens for the piecewise
polynomial described in Theorem~\ref{thm:piecew_poly}. In the simplest case $A
= P$ and $\Omega_{P,A} \equiv 1$. 

\begin{quest}
    What is the coarsest subdivision of $\ordC(A)$ for which
    $\Omega_{P,A}(\l)$ is a piecewise polynomial?
\end{quest}

For this it is necessary to give a combinatorial condition when two adjacent cells of $\T_A$ carry
the same polynomial.

\begin{example}
Consider the following poset $P$ given by its Hasse diagram:

\begin{figure}[h]
\includegraphics[width=1.7cm]{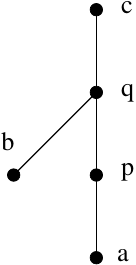}
\end{figure}

Let $A=\{a,b,c\}$ and let $\lambda \colon A \rightarrow \mathbb{Z}$ be an order preserving map. If $\lambda (a)<\lambda (b)<\lambda (c)$ then there are two compatible linear extensions of $P$:
\[
\begin{array}{ccccccccc}
a&\prec &b&\prec &p&\prec &q&\prec &c\\
a&\prec &p&\prec &b&\prec &q&\prec &c\\
\end{array}
\]
The number of lattice points in the corresponding maximal cells of $\mathcal{O}_{P,A}(\lambda)$ are ${\lambda (c) -\lambda (b)+2 \choose 2}$ and $(\lambda (c)-\lambda (b)+1)(\lambda (b) -\lambda (a) +1)$ respectively. Taking into account overcounting we have to substract the number of order preserving extensions of $\lambda$ to $P$ for which $p$ and $b$ have the same value. These correspond to lattice points in the cell given by the chain of order ideals
\[
\{a\}\subset \{a,b,p\} \subset \{a,b,p,q\} \subset \{a,b,p,q,c\}
\] 
and their number is $\lambda (c)-\lambda (b)+1$. In total we have
\[
\Omega _{P,A}(\lambda)={\lambda (c) -\lambda (b)+2 \choose 2}+(\lambda (c)-\lambda (b)+1)(\lambda (b) -\lambda (a) +1)-(\lambda (c)-\lambda (b)+1).
\]
If $\lambda (b)<\lambda (a)<\lambda (c)$ then the only compatible linear extension is
\[
\begin{array}{ccccccccc}
b&\prec &a&\prec &p&\prec &q&\prec &c\\
\end{array}
\]
and 
\[
\Omega _{P,A}(\lambda)={\lambda (c) -\lambda (a)+2 \choose 2}.
\]
\end{example}

\subsection{Chains and Cayley cones}\label{ssec:chains}

Let us consider the special case in which $A \subseteq P$ is a chain. It turns
out that in this case the relation between $\ordC(P)$ and $\ordC(A)$ is very
special. A pointed polyhedral cone $K \subset \R^n$ is called a \Emph{Cayley
cone} over $L$ if there is a linear projection $\pi: K \rightarrow L$ onto a pointed
simplicial cone $L$ such that every ray of $K$ is injectively mapped to a ray
of $L$. In case $K$ is not pointed, then $K \cong K^\prime \times U$ where
$K^\prime$ is pointed and $U$ is a linear space and we require $L \cong
L^\prime \times U$ and $\pi$ is an isomorphism on $U$.  Cayley cones are the
``cone-analogs'' of Cayley configurations/polytopes~\cite[Sect.~9.2]{DLRS}
which are precisely the preimages under $\pi$ of bounded hyperplane sections
$L \cap H$. 

\begin{prop}
    If $A \subseteq P$ is a chain and $\min(P) \cup
    \max(P) \subseteq A$, then $\ordC(P)$ is a Cayley cone over
    $\ordC(A)$.
\end{prop}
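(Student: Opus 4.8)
The plan is to exhibit the projection $\pi$ explicitly using the chain structure of $A$ and to verify the two defining properties of a Cayley cone: that $\pi$ maps $\ordC(P)$ onto a simplicial cone and that every ray of $\ordC(P)$ is sent injectively to a ray of the target. Write $A = \{a_0 \prec_P a_1 \prec_P \cdots \prec_P a_r\}$ for the chain, noting that since $\min(P)\cup\max(P)\subseteq A$, the elements $a_0$ and $a_r$ are the global minimum and maximum reached by every element of $P$. The natural candidate for $\pi$ is the coordinate restriction $\pi : \R^P \to \R^A$, $\phi \mapsto \phi|_A$, whose fibers are exactly the affine spaces $\Ext_{P,A}(\l)$ from Section~\ref{ssec:mop}. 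Because $A$ is a chain, $\ordC(A) \subseteq \R^A$ is itself a simplicial cone: after modding out the lineality (the all-ones direction, since $A$ is connected), the inequalities $\phi(a_0) \le \phi(a_1) \le \cdots \le \phi(a_r)$ cut out a cone spanned by the $r$ step functions $\phi^k$ with $\phi^k(a_j) = [j \ge k]$, which is unimodular and in particular simplicial. So the target cone $L = \ordC(A)$ has the required shape for free.

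First I would check that $\pi(\ordC(P)) = \ordC(A)$. The inclusion $\pi(\ordC(P)) \subseteq \ordC(A)$ is immediate since restricting an order preserving map on $P$ to the subposet $A$ yields an order preserving map on $A$. For the reverse inclusion, given any $\psi \in \ordC(A)$ I would produce an order preserving extension to $P$; the assumption that every $p \in P$ has both a lower and an upper bound in $A$ makes this a routine interpolation (for instance, send $p$ to the value $\psi(a_j)$ where $a_j$ is the largest chain element below $p$, or more carefully to any value sandwiched between the values forced from below and above), showing $\ordC(A) \subseteq \pi(\ordC(P))$. This establishes surjectivity onto the simplicial cone $L$.

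The substantive step is the ray condition. After quotienting by the common lineality space (spanned by the indicator of $P$, which is connected because $A$ is a connected chain meeting every component), I would argue that each extreme ray of $\ordC(P)$ maps to an extreme ray of $\ordC(A)$ and that distinct rays with a common image are excluded. The extreme rays of $\ordC(P)$ are exactly the rays spanned by characteristic functions $\mathds{1}_{P \setminus I}$ of complements of proper nonempty order ideals $I$ (equivalently, the $0/1$ valued order preserving maps), as follows from the unimodular triangulation $\T_P$ of Proposition~\ref{prop:face_graph} and the description of $F(\I)$ for saturated chains. Restricting such a function to the chain $A$ gives a $0/1$ valued monotone function on $A$, i.e. a step function $\phi^k$ for some $k \in \{1,\dots,r\}$ (or the zero/lineality class), which is precisely a ray generator of $\ordC(A)$; here I would use that $I$ meets the chain in an initial segment, so $\mathds{1}_{P\setminus I}|_A$ is monotone $0/1$. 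Thus $\pi$ sends rays to rays. The injectivity-on-each-ray requirement is the ``every ray of $K$ is injectively mapped'' clause: I must confirm that no ray generator is sent into the lineality space (so that it genuinely lands on a ray of $L$, not at the cone point), which holds because the complement $P \setminus I$ always contains $a_r$ and misses $a_0$, so the restriction is nonconstant on $A$.

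The main obstacle I anticipate is bookkeeping around the lineality space and the precise reading of the definition of Cayley cone in the non-pointed case. The definition demands $K \cong K' \times U$ and $L \cong L' \times U$ with $\pi$ an isomorphism on $U$; since $\ordC(P)$ and $\ordC(A)$ share the lineality direction spanned by $\mathds{1}_P$ and $\mathds{1}_A$ respectively, and $\pi$ visibly carries one to the other, this compatibility should hold, but I would need to state it carefully rather than wave at it. A secondary subtlety is that the Cayley-cone definition asks each ray to be mapped \emph{injectively} to a ray, which is slightly stronger than mapping rays onto rays; I would verify that the map on the set of extreme rays, while not necessarily a bijection (several ideals can give the same step function on $A$), restricts injectively on each individual ray, which is automatic once we know the image of a ray generator is nonzero modulo lineality. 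Everything else is direct from the already-established simpliciality of $\ordC(A)$ and the ray description of $\ordC(P)$.
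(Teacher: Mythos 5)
Your proof is correct and follows essentially the same route as the paper: the restriction map $\pi(\phi)=\phi|_A$, simpliciality of $\ordC(A)$ modulo lineality via the step functions $\phi^k$, and the identification of the extreme rays of $\ordC(P)$ with indicator functions of proper filters, which restrict to step functions on the chain $A$. You supply slightly more detail than the paper (the explicit surjectivity argument and the check that ray generators stay off the lineality space because every proper nonempty filter contains the maximum and misses the minimum), but the underlying argument is the same.
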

\begin{proof}
    The restriction map $\pi(\phi) = \phi|_A$ for $\phi \in \ordC(P)$ is a
    surjective linear projection.  Since $A$ is a chain and $\min(P) \cup
    \max(P) \subseteq A$, $A$ and $P$ are connected posets. The lineality
    spaces are spanned by $1_A$ and $1_P$, respectively, and $\pi$ is an
    isomorphism on lineality spaces. Moreover, $\ordC_0(A) = \ordC(A) / (\R
    \cdot 1_A)$ is linear isomorphic to the cone of order preserving maps $A
    \rightarrow \R_{\ge0}$ which map $\min(A) = \{a_0\}$ to $0$, which shows
    that $\ordC_0(A)$ is simplicial.

    Thus, we only need to check that $\pi : \ordC_0(P) \rightarrow \ordC_0(A)$
    maps rays to rays. It follows from the description of face partitions
    (Proposition~\ref{prop:face_graph}) that
    the rays of $\ordC_0(P)$ are spanned by indicator functions of proper filters. Let $\phi$ be such an indicator function. Then also $\phi|_A : A \rightarrow \{0,1\}$ is an indicator function of a proper filter of $A$ which proves the claim. 
\end{proof}

Here is the main property of Cayley cones that make them an indispensable tool
in the study of mixed subdivisions and mixed volumes.

\begin{prop}
    Let $K$ be a pointed Cayley cone over $L$. Let $r_1,\dots,r_k$ be linearly
    independent generators of $L$ and let $K_i =
    \pi^{-1}(r_i)$ be the fiber over the generator $r_i$. Then for every point $p
    \in L$ we have
    \[
        \pi^{-1}(p) \ = \ \mu_1 K_1 \ + \  \mu_2 K_2 \ + \  \cdots \ + \
        \mu_r K_r
    \]
    where $\mu_1,\mu_2,\dots,\mu_r \ge 0$ are the unique coefficients such that $p =
    \sum_i \mu_ir_i$.
\end{prop}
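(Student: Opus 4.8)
The plan is to prove the fiber decomposition directly from the definition of Cayley cone, using linearity of the projection $\pi$. Let $K$ be a pointed Cayley cone over $L$ with projection $\pi : K \rightarrow L$, and let $r_1,\dots,r_k$ be linearly independent generators of the simplicial cone $L$. Since $L$ is simplicial with these generators, every point $p \in L$ has a \emph{unique} expression $p = \sum_i \mu_i r_i$ with all $\mu_i \ge 0$; this is exactly where linear independence of the $r_i$ enters. The fibers $K_i = \pi^{-1}(r_i)$ are the preimages of the generating rays, and the claim is that $\pi^{-1}(p) = \mu_1 K_1 + \cdots + \mu_k K_k$ as a Minkowski sum.

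First I would prove the inclusion $\supseteq$. Take $x_i \in K_i$ for each $i$, so $\pi(x_i) = r_i$ and $x_i \in K$. Since $K$ is a convex cone, the nonnegative combination $\sum_i \mu_i x_i$ lies in $K$, and by linearity of $\pi$ we compute $\pi\bigl(\sum_i \mu_i x_i\bigr) = \sum_i \mu_i \pi(x_i) = \sum_i \mu_i r_i = p$. Hence $\sum_i \mu_i x_i \in \pi^{-1}(p)$, which gives $\mu_1 K_1 + \cdots + \mu_k K_k \subseteq \pi^{-1}(p)$.

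The reverse inclusion $\subseteq$ is where I expect the main work, and it is the step that genuinely uses the Cayley structure rather than mere linearity. Take any $x \in K$ with $\pi(x) = p$; I want to write $x = \sum_i \mu_i x_i$ with $x_i \in K_i$. The key point is that the defining property of a Cayley cone forces each ray of $K$ to map injectively onto a ray of $L$, so $K$ is ``graded'' by the fibers over the generators in a compatible way. Concretely, one writes $x$ as a nonnegative combination of ray generators of $K$; grouping these generators according to which generator $r_i$ of $L$ they project onto, and using that $\pi$ restricted to each ray is a bijection onto a ray of $L$, one sees that the part of $x$ projecting to the $r_i$-direction accumulates to a point of $\mu_i K_i$. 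The uniqueness of the coefficients $\mu_i$ (again from simplicial $L$) guarantees that this regrouping respects the prescribed scaling. I would make this rigorous by induction on the number of ray generators of $K$ appearing in the decomposition of $x$, peeling off one generator at a time and matching it to its image ray.

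For the non-pointed case I would reduce to the pointed one via the stated splitting $K \cong K' \times U$ and $L \cong L' \times U$, on which $\pi$ acts as an isomorphism on the $U$ factor; the decomposition then follows componentwise, with the $U$-component handled trivially by the isomorphism and the $K'$-component handled by the pointed argument above. The main obstacle is making the $\subseteq$ direction airtight: one must argue that every $x$ in the fiber decomposes consistently across the fibers $K_i$ with \emph{exactly} the coefficients $\mu_i$ dictated by $p$, which is precisely the content of the injectivity-of-rays hypothesis and the simpliciality of $L$ working together.
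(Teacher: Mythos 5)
Your proposal is correct and follows essentially the same route as the paper: write $x \in \pi^{-1}(p)$ as a nonnegative combination of ray generators of $K$ normalized so that each projects onto some $r_i$, group the terms by image ray, and use the uniqueness of the conic coordinates $\mu_i$ in the simplicial cone $L$ to conclude that the $i$-th group lands in $\mu_i K_i$ (the paper phrases this via a minimal generating set $\{s_{ij}\}$ with $\pi(s_{ij})=r_i$ and $K_i=\conv\{s_{ij}\}$). The induction you suggest for the reverse inclusion is not needed once the grouping is set up, but it does no harm.
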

\begin{proof}
    Let $\tset{s_{ij} \in K }{ 1 \le i \le k, 1 \le j \le m_i }$ be a 
    minimal generating set of $K$ such
    that $\pi(s_{ij}) = r_i$. It follows that $K_i = \conv \tset{ s_{ij} }{ 1
    \le j \le m_i }$. Thus, if $\mu_{ij} \ge 0$ are such that 
    \[
        \sum_{i,j} \mu_{ij} s_{ij} \ \in \ \pi^{-1}(p)
    \]
    then, by the uniqueness of the $\mu_i$, we have $\sum_{j} \mu_{ij} =
    \mu_i$ and $\sum_{j} \mu_{ij} s_{ij} \in \mu_iK_i$.
\end{proof}

If $A = \{ a_0 \prec_P a_1 \prec_P \cdots \prec_P a_k \}$ is a chain, recall
that $\phi^0,\phi^1,\dots,\phi^k : A \rightarrow \{0,1\}$ with
$\phi^i(a_j) = 1$ iff $j \ge i$ is a minimal generating set of $\ordC(A)$. If
$\l : A \rightarrow \R$ is an order preserving map, then unique coordinates of
$\lambda \in \ordC(A)$
with respect to $\{\phi^i\}$ are given by $\mu_0 = \l(a_0)$ and $\mu_i =
\l(a_i) - \l(a_{i-1})$ for $ 1 \le i \le r$. 

\begin{cor}
    Let $P$ be a poset and 
     $A \subseteq P$ a chain such that 
    $\min(P) \cup \max(P) \subseteq A$. Let
    $\Phi_i = \ord_{P,A}(\phi^i)$ for $i = 1,2, \dots, k$. Then for any order
    preserving map $\l: A \rightarrow \R$ we have
    \begin{equation}\label{eqn:ord_cayley}
        \ord_{P,A}(\l) \ = \ \mu_0 1_P \ + \ \mu_1\Phi_1 \ + \ \mu_2\Phi_2 \ + \ \cdots  \
        + \ \mu_k\Phi_k.
    \end{equation}
\end{cor}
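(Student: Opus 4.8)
The plan is to read off the statement as a direct instance of the Minkowski-sum decomposition for Cayley cones. By the preceding proposition, the restriction map $\pi(\phi) = \phi|_A$ exhibits $\ordC(P)$ as a Cayley cone over $\ordC(A)$, and $\ord_{P,A}(\l) = \pi^{-1}(\l)$ by definition. The functions $\phi^0,\phi^1,\dots,\phi^k$ form a basis of $\R^A$ (they are indicators of the nested filters $\{a_i,\dots,a_k\}$, hence triangular as a system), with $\phi^0 = 1_A$ spanning the lineality $\R\cdot 1_A$ and $\phi^1,\dots,\phi^k$ projecting to the generators of the pointed quotient $\ordC_0(A)$, which is simplicial. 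Writing $\l = \sum_{i=0}^k \mu_i\phi^i$ recovers exactly the coordinates $\mu_0 = \l(a_0)$ and $\mu_i = \l(a_i)-\l(a_{i-1})$, and order preservation of $\l$ is equivalent to $\mu_1,\dots,\mu_k \ge 0$.

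First I would identify the fibers. For $i \ge 1$ the fiber over the generator $\phi^i$ is $\pi^{-1}(\phi^i) = \ord_{P,A}(\phi^i) = \Phi_i$. For the lineality generator $\phi^0 = 1_A$ I would observe that $\pi^{-1}(1_A) = \ord_{P,A}(1_A) = \{1_P\}$: since $A$ contains all minimal and maximal elements and any order preserving extension is sandwiched between the constant value $1$ on $\min(P)$ and on $\max(P)$, every element of $P$ must take the value $1$. Thus the lineality fiber is the single point $1_P$, which is precisely what the term $\mu_0 1_P$ encodes.

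With these identifications the Minkowski-sum proposition yields $\pi^{-1}(\l) = \sum_i \mu_i\,\pi^{-1}(\phi^i)$, which is exactly the claimed identity. The hard part is that the quoted proposition is stated for a pointed cone with linearly independent generators and nonnegative coefficients, whereas here $\ordC(A)$ is not pointed and the lineality coordinate $\mu_0 = \l(a_0)$ may be negative. I would handle this by splitting off the lineality: over the pointed quotients $\pi : \ordC_0(P) \to \ordC_0(A)$ the proposition applies verbatim to the generators $\phi^1,\dots,\phi^k$ with nonnegative coefficients $\mu_1,\dots,\mu_k$, yielding $\mu_1\Phi_1 + \cdots + \mu_k\Phi_k$ modulo the lineality direction. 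Since $\pi$ is a linear isomorphism on lineality spaces with one-point fiber $\{1_P\}$, reintroducing the lineality merely translates this sum by $\mu_0 1_P$, a translation that is legitimate for any sign of $\mu_0$.

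As a consistency check, and to make the argument transparent for a reader who wants to avoid the pointed/non-pointed bookkeeping, I would verify directly the easy inclusion $\supseteq$: any $\mu_0 1_P + \sum_{i \ge 1}\mu_i\psi_i$ with $\psi_i \in \Phi_i$ is order preserving, being a nonnegative combination of order preserving maps plus a constant, and restricts on $A$ to $a_j \mapsto \mu_0 + \sum_{i=1}^{j}\mu_i = \l(a_j)$, so it lies in $\ord_{P,A}(\l)$. The reverse inclusion is precisely the nontrivial decomposition statement supplied by the Cayley structure, which is where the Cayley cone property established above does the real work.
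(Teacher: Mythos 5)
Your proof is correct and follows the same route the paper intends: the corollary is stated there without a separate proof, being a direct consequence of the two preceding propositions (the Cayley-cone structure of $\ordC(P)$ over $\ordC(A)$ and the Minkowski decomposition of fibers), which is exactly how you argue. You in fact supply slightly more detail than the paper, namely the explicit reduction modulo the lineality space to handle a possibly negative $\mu_0$ and the identification of the fiber over $1_A$ with the single point $1_P$; both steps are accurate.
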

This was already observed by Stanley~\cite[Thm.~3.2]{stanley81} and used to
show that the number of order preserving maps extending a given map on a chain
$A \subset P$ satisfy certain log-concavity conditions. This is done by
identifying the numbers as mixed volumes which are calculated from the Cayley
polytope.

In particular, $\Omega_{P,A}(\l)$ counts the number of lattice points in the
Minkowski sum \eqref{eqn:ord_cayley}. It follows from
Theorem~\ref{thm:piecew_poly} and~\eqref{eqn:prod_ehr} that over a maximal
cell $C \in \T_A$, the function $\Omega_{P,A}(\l)$ can be written as a
polynomial $f(\mu)$ in the coordinates $\mu = (\mu_1,\dots,\mu_k)$. The degree
of $f(\mu)$ in every variable $\mu_i$ is given by
\[
    \deg_{\mu_i} f(\mu) \ = \ \dim \Phi^i \ = \ |P \setminus (P_{\preceq
    a_{i-1}} \cup P_{\succeq a_{i}}) |
\]
The degree in $\l_i$ is more difficult to determine. 
\begin{quest}
    What is $\deg_{\l_i} \Omega_{P,A}(\l)$ in terms of the combinatorics of
    $P$?
\end{quest}

If $A \subseteq P$ is a
chain with minimum $a_0$ and maximum $a_k$, then the degree of $\l_0$ and
$\l_k$ agrees with $\mu_1$ and $\mu_k$.
A related situation is implicitly treated in Fischer~\cite{fischer06}: The
number $\alpha(n;k_1,k_2,\dots,k_n)$ of monotone triangles with bottom row $\k
= (k_1 \le k_2 \le \cdots \le k_n)$ is a polynomial in $\k$ and is of degree
$n-1$ in every variable $k_i$.  In Section~\ref{sec:MT}, it is shown that
$\alpha(n;\k)$ is essentially the number of integer-valued order preserving
extensions from a particular poset with some extra conditions (i.e.\ certain
faces of the marked order polytope are excluded).  However, it appears that
these extra condition do not influence the degree.

\subsection{Combinatorial reciprocity} \label{ssec:reciprocity}
For a special choice of $A$, we recover
the classical order polytope.

\begin{example}[Order polytopes]
    Let $P^\prime$ be the result of adjoining a minimum $\hat{0}$ and
    maximum $\hat{1}$ to $P$. Let $A = \{ \hat{0}, \hat{1} \}$ and for $n > 0$
    let $\l_n : A \rightarrow \Z$ be the order preserving map with
    $\l_n(\hat{0}) = 1$ and $\l_n(\hat{1}) = n$. Then
    $\Omega_{P^\prime,A}(\l_n) = \Omega_P(n)$ is the order polynomial of $P$
    which counts the number of order preserving maps from $P$ to $[n]$.
    Equivalently, $\Omega_{P^\prime,A}(\l_n)$ equals the Ehrhart polynomial 
    of the order polytope $\ordC(P) \cap [0,1]^P$ evaluated at $n-1$.
    Ehrhart-Macdonald Reciprocity (see for example~\cite[Thm.~4.1]{BR}) then yields that 
    \[
        (-1)^{|P|}\,\Omega_P(-n) \ = \
        (-1)^{\dim\,\ord_{P^\prime,A}(\l_n)}\, \Omega_{P^\prime,A}(\l_{-n})
    \]
    equals the number of strictly order preserving maps into $[n]$.
\end{example}

We wish to extend this combinatorial reciprocity to our more general setting.
For that we say that an extension $\widehat{\l} : P \rightarrow \R$ of $\l$ is
\Emph{strict} if $\widehat{\l}(p) = \widehat{\l}(q)$ and $p \prec q$ implies
that $a \preceq p \prec q \preceq b$ for
some $a,b \in A$ with $\l(a) = \l(b)$. 

\begin{thm}\label{thm:reciprocity}
    Let $A \subset P$ be a pair of posets with $\min(P)\cup\max(P) \subseteq
    A$. If $\l : A \rightarrow \Z$ is an order preserving map, then
    \[
        (-1)^{\dim\,\ord_{P,A}(\l)}\,\Omega_{P,A}(-\l) 
    \]
    equals the number of strict order preserving extensions of $\l$.
\end{thm}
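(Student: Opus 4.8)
The plan is to emulate the Ehrhart--Macdonald strategy that proves the classical case from the \emph{Example} above, but to carry it out cell-by-cell on the induced subdivision of $\ord_{P,A}(\l)$ rather than for a single polytope. By Theorem~\ref{thm:piecew_poly}, over any fixed cell $C$ of the canonical subdivision $\T_A$ of $\ordC(A)$ the function $\Omega_{P,A}(\l)$ is an honest polynomial given by the sum \eqref{eqn:prod_ehr} over compatible chains of order ideals, obtained by M\"obius inversion on the induced subdivision of $\ord_{P,A}(\l)$. The key point is that each maximal cell $F(\I)\cap\Ext_{P,A}(\l)$ is, by Lemma~\ref{lem:prod_simpl}, a Cartesian product of simplices $F_0\times\cdots\times F_{r-1}$, and each factor is a dilate $(\l(a_{j+1})-\l(a_j))\Delta_{d_j}$ of a standard simplex as in \eqref{eqn:std_simplex}. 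This product structure is what makes the reciprocity tractable: Ehrhart reciprocity applies factorwise.

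First I would observe that the lattice-point count of a product of simplices factors as in \eqref{eqn:prod_ehr}, so the polynomial representing $\Omega_{P,A}$ on $C$ is, up to M\"obius inversion, a sum of products of binomial coefficients $\binom{\l(a_{j+1})-\l(a_j)+d_j}{d_j}$. For a single dilated simplex $t\Delta_d$, Ehrhart--Macdonald reciprocity gives $(-1)^d\,\mathrm{Ehr}_{\Delta_d}(-t) = \#(\relint(t\Delta_d)\cap\Z^d)$, i.e.\ substituting $-\l$ for $\l$ and correcting by the sign $(-1)^{d_j}$ turns the closed-simplex count into the count of lattice points in the \emph{relative interior} of the $j$-th factor. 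Multiplying across all $j$, the total sign is $(-1)^{\sum_j d_j} = (-1)^{\dim F(\I)\cap\Ext_{P,A}(\l)}$, and on a maximal cell this dimension equals $\dim\ord_{P,A}(\l)$ by Corollary~\ref{cor:dim}. The next step is therefore to argue that evaluating the polynomial at $-\l$ and multiplying by $(-1)^{\dim\ord_{P,A}(\l)}$ produces, via the same M\"obius inversion run backwards, the count of lattice points in the relative interiors of the maximal cells --- equivalently, the lattice points of $\ord_{P,A}(\l)$ that lie in no lower-dimensional cell of the induced subdivision.

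It then remains to identify these ``interior'' lattice points combinatorially with the \emph{strict} order preserving extensions. Here I would unwind the definition of strictness preceding the theorem: a lattice point $\widehat{\l}$ lies in the relative interior of some maximal cell $F(\I)\cap\Ext_{P,A}(\l)$ exactly when the chain of order ideals $\I(\widehat{\l})$ it induces is compatible with $\l$ \emph{and} saturated away from the blocks forced to be constant by $\l$. Concretely, $\widehat{\l}(p)=\widehat{\l}(q)$ for a cover relation $p\precC q$ is permitted only when $p,q$ are squeezed between marked elements $a\preceq p\prec q\preceq b$ with $\l(a)=\l(b)$; this is precisely the condition that the equality is forced rather than interior, matching the definition of strictness. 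I would verify this correspondence by checking that the dilation factors $\l(a_{j+1})-\l(a_j)$ being \emph{strictly} positive (which holds since on a maximal cell the marked values are genuinely increasing across the relevant blocks) makes the relative-interior condition on each factor simplex translate to strict inequalities $\widehat{\l}(I_h\setminus I_{h-1}) < \widehat{\l}(I_{h+1}\setminus I_h)$ on the non-forced steps.

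The main obstacle I anticipate is the bookkeeping at the boundary between cells: I must ensure that the cell-wise Ehrhart reciprocity assembles \emph{globally} into the correct interior count, i.e.\ that lattice points lying on shared faces of the induced subdivision are counted with the right multiplicity after M\"obius inversion, and that the sign $(-1)^{\dim\ord_{P,A}(\l)}$ is uniform across all maximal cells (which requires all maximal cells to have the same dimension as $\ord_{P,A}(\l)$, true because the subdivision is into full-dimensional products of simplices). The subtle part is reconciling the per-factor relative interiors with the \emph{poset-theoretic} notion of strictness: a point in the relative interior of the product need not be in the topological interior of $\ord_{P,A}(\l)$, since the marked order polytope may itself be lower-dimensional when $\l$ forces equalities among comparable marked elements. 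I would handle this by passing to the quotient description of Proposition~\ref{prop:compat_face_graph}, working on the genuinely full-dimensional polytope $\ord_{P/G,A/G}(\l_G)$ for the minimal compatible face partition $G$, applying the product-of-simplices reciprocity there, and then pulling the result back to recover exactly the strict extensions of $\l$ on $P$.
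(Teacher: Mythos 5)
Your plan diverges substantially from the paper's proof, and as written it has a real gap at its central step. The paper's argument is a short dilation trick: since the cells of the canonical subdivision $\T_A$ of $\ordC(A)$ are cones, $n\l$ lies in the same relatively open cell as $\l$ for every integer $n>0$, so the single polynomial $P$ representing $\Omega_{P,A}$ on that cell satisfies $P(n\l)=|n\,\ord_{P,A}(\l)\cap\Z^P|$; hence $n\mapsto P(n\l)$ \emph{is} the Ehrhart polynomial of $\ord_{P,A}(\l)$, and one application of Ehrhart--Macdonald reciprocity to the whole polytope at $n=-1$ gives the count of lattice points in $\relint\ord_{P,A}(\l)$. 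You instead apply reciprocity factorwise to each product-of-simplices cell of the induced subdivision and then try to reassemble. The reassembly you flag as ``the main obstacle'' is precisely the Euler-relation identity $\sum_{F}(-1)^{\dim F}\,|F\cap\Z^P| \;=\;(-1)^{\dim Q}\,|\relint Q\cap\Z^P|$ summed over all nonempty cells $F$ of a subdivision of $Q$ --- in other words, you would be re-proving Ehrhart--Macdonald for $\ord_{P,A}(\l)$ from the simplex case. You name this obstacle but do not supply the argument, so the proof is incomplete exactly where the work lies; the paper sidesteps it entirely.

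There is also an outright error in your combinatorial identification. Strict extensions are the lattice points of $\relint\ord_{P,A}(\l)$ (no non-forced facet inequality $\widehat{\l}(p)\le\widehat{\l}(q)$ is tight), \emph{not} the lattice points lying in relative interiors of \emph{maximal} cells of the induced subdivision, and not ``the lattice points in no lower-dimensional cell.'' A strict extension may assign equal values to incomparable elements; its induced chain of order ideals is then not saturated, so it lies in the relative interior of a lower-dimensional interior cell. For instance, take $P=\{\hat{0},\hat{1},p,q\}$ with $p,q$ incomparable, $A=\{\hat{0},\hat{1}\}$, $\l(\hat{0})=0$, $\l(\hat{1})=2$: the extension $\widehat{\l}(p)=\widehat{\l}(q)=1$ is strict but sits on the diagonal cell of the subdivision of the square $\ord_{P,A}(\l)$. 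Your third paragraph would therefore undercount. If the Euler-relation bookkeeping of the previous paragraph is carried out correctly, the signed sum does produce $|\relint\ord_{P,A}(\l)\cap\Z^P|$, and the identification with strictness then follows directly from the facet description (together with the quotient reduction via Proposition~\ref{prop:compat_face_graph} when $\l$ forces equalities, which you correctly anticipate); but the detour through maximal cells must be dropped.
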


Note that if $F(\I^A)$ is the unique cell of the subdivision of $\ordC(A)$ that
contains $\l$ in the relative interior, then $\Omega_{P,A}(\l)$ is the evaluation of a polynomial and
it is this polynomial that is evaluated at $-\l$ in the course of the theorem
above. From the geometric point of view,
$(-1)^{\dim\,\ord_{P,A}(\l)}\,\Omega_{P,A}(-\l)$ counts the number of lattice
points in the relative interior of $\ord_{P,A}(\l)$. This is reminiscent of
Ehrhart-Macdonald reciprocity and in fact follows from it.

\begin{proof}
    For fixed $\l$, let $\I^A$ such that $\l \in \relint F(\I^A)$. Then
    $\Omega_{P,A}$ restricted to $\relint F(\I^A)$ is given by some
    polynomial $P(\mathbf{x}) \in \R[x_a : a \in A]$. For $n \in \Z_{>0}$, we
    have that $n\l \in \relint F(\I^A)$ and thus $\Omega_{P,A}(n\l) =
    P(n\l)$. As $\Omega_{P,A}(n\l)$ equals the number of lattice points in
    $n\,
    \ord_{P,A}(\l)$, it follows that $P(n\l)$ is the Ehrhart polynomial of
    $\ord_{P,A}(\l)$.  Now, Ehrhart-Macdonald reciprocity implies that the
    number of points in the relative interior of $\ord_{P,A}(\l)$  equals
    \[
        (-1)^d\,\Ehr(\ord_{P,A}(\l),-1) \ = \ (-1)^d P(-\l) \ = \ (-1)^d
        \Omega_{P,A}(-\l).
    \] 
    where $d = \dim\,\ord_{P,A}(\l)$.
\end{proof}

\subsection{Marked chain polytopes}\label{ssec:odds}

Let us close by \Emph{transferring} our results to the marked chain polytopes
of Ardila, Bliem, and Salazar~\cite{ABS}. To that end we write $\phi(C) = \sum
\tset{ \phi(c) }{ c \in C }$ for a subset $C \subseteq P$ and $\phi : P
\rightarrow \R$.  For a pair of posets $A \subset P$ and an order preserving
map $\l : A \rightarrow \R$, the \Emph{marked chain polytope} is the convex
polytope
\[
    \mathcal{C}_{P,A}(\l) \ = \ \set{ \phi \in \R^P_{\geq 0} }{ \phi(C) \ \le \ 
    \l(b) - \l(a) \text{ for every chain $C \subseteq [a,b]$ and $a,b \in A$
    }}
\]
The unmarked version of the chain polytope was introduced in \cite{stanley86}
to show that certain invariants of $P$ (such as $\Omega_P(n)$) only depend on
the comparability graph of $P$. The marked chain polytopes were introduced
in~\cite{ABS} in connection with representation theory.  Stanley defined a
lattice preserving, piecewise linear map from the order polytope to the chain
polytope and this transfer map was extended in~\cite{ABS} to relate the
arithmetic of marked order polytope and marked chain polytopes. Thus,
appealing to Theorem~3.4 of~\cite{ABS} proves
\begin{cor}
    For a pair of posets $A \subset P$, $\min(P) \cup
    \max(P) \subseteq A$, the function
    \[
        \l \ \mapsto \ |\mathcal{C}_{P,A}(\l) \ \cap \ \Z^P|
    \]
    is a piecewise polynomial over $\ordC(A) \cap \Z^A$ and evaluating at
    $-\l$ equals $(-1)^{\dim\,\mathcal{C}_{P,A}(\l)}$ times the number of
    lattice points in the relative interior of $\mathcal{C}_{P,A}(\l)$.
\end{cor}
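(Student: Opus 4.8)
The plan is to transport the statement across the lattice-preserving transfer map of Ardila, Bliem, and Salazar rather than to reprove anything from scratch. The single external input is Theorem~3.4 of~\cite{ABS}, which supplies a piecewise-linear homeomorphism $\Phi \colon \ord_{P,A}(\l) \to \mathcal{C}_{P,A}(\l)$ restricting to a bijection $\ord_{P,A}(\l) \cap \Z^P \to \mathcal{C}_{P,A}(\l) \cap \Z^P$. First I would draw out its three consequences. The lattice-point counts coincide,
\[
    |\mathcal{C}_{P,A}(\l) \cap \Z^P| \ = \ |\ord_{P,A}(\l) \cap \Z^P| \ = \ \Omega_{P,A}(\l);
\]
as a homeomorphism of polytopes $\Phi$ preserves dimension, so $\dim \mathcal{C}_{P,A}(\l) = \dim \ord_{P,A}(\l)$; and $\Phi$ carries the relative interior of $\ord_{P,A}(\l)$ onto that of $\mathcal{C}_{P,A}(\l)$. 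The first displayed equality, combined with Theorem~\ref{thm:piecew_poly}, already settles the first assertion: the chain-polytope count is the piecewise polynomial $\Omega_{P,A}$ over the cells of $\T_A$, and on the cell $F(\I^A)$ with $\l$ in its relative interior it is represented by the very polynomial that represents $\Omega_{P,A}$ there.

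For the reciprocity I would use that $\Phi$ matches up interior lattice points bijectively: an interior lattice point of $\ord_{P,A}(\l)$ is sent to an interior point (homeomorphism) that is again integral (lattice-preserving), and conversely. Hence the number of lattice points in the relative interior of $\mathcal{C}_{P,A}(\l)$ equals the number in the relative interior of $\ord_{P,A}(\l)$, which by Theorem~\ref{thm:reciprocity} equals $(-1)^{\dim \ord_{P,A}(\l)}\,\Omega_{P,A}(-\l)$. Because the two counting functions agree as piecewise polynomials, evaluating $|\mathcal{C}_{P,A}(\cdot) \cap \Z^P|$ at $-\l$ returns precisely $\Omega_{P,A}(-\l)$; combined with $\dim \mathcal{C}_{P,A}(\l) = \dim \ord_{P,A}(\l)$, rearranging shows that this value is $(-1)^{\dim \mathcal{C}_{P,A}(\l)}$ times the number of interior lattice points of $\mathcal{C}_{P,A}(\l)$, as required.

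A self-contained alternative for the reciprocity, bypassing the interior-preserving property of $\Phi$, mirrors the proof of Theorem~\ref{thm:reciprocity} directly. Since the inequalities $\phi(C) \le \l(b) - \l(a)$ are homogeneous in $\l$ while the constraint $\phi \ge 0$ is fixed, the marked chain polytope scales honestly, $\mathcal{C}_{P,A}(n\l) = n\,\mathcal{C}_{P,A}(\l)$; using that $\mathcal{C}_{P,A}(\l)$ is a lattice polytope~\cite{ABS}, the map $n \mapsto |n\,\mathcal{C}_{P,A}(\l) \cap \Z^P| = \Omega_{P,A}(n\l)$ is its Ehrhart polynomial, and Ehrhart-Macdonald reciprocity applies verbatim. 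In either route the only real obstacle is the careful invocation of Theorem~3.4 of~\cite{ABS}: one must verify that it yields a genuine lattice-point bijection and a homeomorphism, for it is exactly these two properties---rather than a mere equality of dilated lattice-point counts---that force the two polytopes to share both their cell polynomials and the sign $(-1)^{\dim}$ appearing in the reciprocity. Everything else is a transcription of Theorems~\ref{thm:piecew_poly} and~\ref{thm:reciprocity}.
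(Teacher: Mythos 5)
Your proposal is correct and follows the paper's own (very terse) proof exactly: the paper likewise disposes of the corollary by appealing to the lattice-preserving transfer map of Theorem~3.4 of~\cite{ABS} to carry Theorems~\ref{thm:piecew_poly} and~\ref{thm:reciprocity} over from $\ord_{P,A}(\l)$ to $\mathcal{C}_{P,A}(\l)$; you merely spell out which properties of the transfer map (lattice-point bijection, homeomorphism, hence preservation of dimension and of interior lattice points) are actually being used. Your self-contained alternative for the reciprocity, via the homogeneity $\mathcal{C}_{P,A}(n\l)=n\,\mathcal{C}_{P,A}(\l)$ and Ehrhart--Macdonald reciprocity, is a valid bonus but not needed.
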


\section{Monotone triangle reciprocity}\label{sec:MT}

A \Emph{monotone triangle} (MT, for short) of order $n$, as exemplified
in~\eqref{eqn:MTexample}, is a triangular array of integers $a = (a_{i,j})_{1
\le j \le i \le n} \in \Z$ such that the entries
\begin{compactenum}[(M1)]
    \item weakly increase along the north-east direction: $a_{i,j}
        \le a_{i-1,j}$ for all $1 \le j < i \le n$,
    \item weakly increase along the south-east direction: $a_{i,j}
        \le a_{i+1,j+1}$ for all $1 \le j \le i < n$, and
    \item \label{not_bad} strictly increase in the rows: $a_{i,j}
        < a_{i,j+1}$ for all $1 \le j < i < n$.
\end{compactenum}

The number of monotone triangles with fixed bottom row $\k = (k_1 \le k_2 \le
\cdots \le k_n)$ is finite and denoted by $\alpha(n;k_1, k_2, \dots, k_n)$. 
Monotone triangles originated in the study of alternating sign
matrices~\cite{MRR} where it was shown that alternating sign matrices of order
$n$ exactly correspond to monotone triangles with bottom row $(1,2,\dots,n)$.
The study of enumerative properties of monotone triangles with general bottom
row was initiated in~\cite{fischer06} where it was shown that
$\alpha(n;k_1,k_2,\dots,k_n)$ is a polynomial in the strictly increasing
arguments. Note that our definition of a monotone triangle slightly differs
from that of Fischer~\cite{fischer06} inasmuch that we do not require that the
bottom row is strictly increasing.

More precisely, there is a polynomial that agrees with $\alpha(n;\k)$ for
increasing $\k = (k_1 \leq k_2 \leq \cdots \leq k_n)$ and, by abuse of
notation, we identify $\alpha(n;\k)$ with this polynomial.  As a polynomial,
$\alpha(n;\k)$ admits evaluations at arbitrary $\k \in \Z^n$ and it is natural
to ask if there are domains for which the values $\alpha(n;\k)$ have
combinatorial significance. An interpretation for the values of $\alpha$ at
weakly decreasing arguments was given by Fischer and Riegler \cite{FR} in
terms of signed
enumeration of so called decreasing monotone triangles. A \Emph{decreasing
monotone triangle} (DMT) is again a triangular array $b = (b_{i,j})_{1 \le j
\le i \le n} \in \Z$ such that

\begin{compactenum}[(W1)]
    \item the entries weakly decrease along the north-east direction: $b_{i,j}
        \ge b_{i-1,j}$ for \mbox{$1 \le j < i \le n$},

    \item the entries weakly decrease along the south-east direction: $b_{i,j}
        \ge b_{i+1,j+1}$ for \mbox{$1 \le j \le i < n$, }

    \item there are no three identical entries per row, and
    \item two consecutive rows do not contain the same integer exactly
        once.
\end{compactenum}
An example of a DMT is 
\begin{equation}\label{eqn:WTexample}
    \newcommand\uZ{{2}}
    \newcommand\uD{{3}}
    \begin{array}{ccccccccc}
          & &   & & 3 & &    & &  \\ 
          & &   &3&   &3&    & &  \\ 
          \cline{4-6}
          & & 4 & & 3 & & 3  & &  \\
          &4&   &4&   &3&    &2&  \\
          \cline{2-4}
        4 & & 4 & & 3 & & 3 & & 1\\
          \cline{1-3}\cline{5-7}
    \end{array}
\end{equation}
The  collection of decreasing monotone triangles with bottom row $\k =
(k_1 \ge k_2 \ge \cdots \ge k_n) \in \Z^n$ is denoted by $\mathcal{W}_n(\k)$.
For a DMT $b$, two adjacent and identical elements in a row are called a
\Emph{duplicate-descendant} if they are either in the last row or the row
below contains exactly the same pair. In the example, the
duplicate-descendants are underlined. The number of duplicate-descendants of
$b$ is denoted by $\dd(b)$.  The precise reciprocity statement now is

\begin{thm}[{\cite[Thm.~1]{FR}}] \label{thm:MT_reciprocity}
    For weakly decreasing integers $\k = (k_1 \ge k_2 \ge \cdots \ge k_n)$ we
    have
    \[
    \alpha(n;k_1,k_2,\dots,k_n) \ = \ (-1)^{\tbinom{n}{2}} \sum_{b \in \mathcal{W}_n(\k)}
    (-1)^{\dd(b)}.
        \]
\end{thm}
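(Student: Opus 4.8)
The plan is to realize monotone triangles as lattice points of a marked order polytope over the Gelfand--Tsetlin poset and to extract the reciprocity from Theorem~\ref{thm:reciprocity}. Let $P$ be the poset on the entries $a_{i,j}$, $1\le j\le i\le n$, whose Hasse diagram records the interlacing covers $a_{i,j}\precC a_{i-1,j}$ and $a_{i,j}\precC a_{i+1,j+1}$, so that order preserving maps are exactly the arrays satisfying (M1) and (M2). I would first check that the bottom row $A=\{a_{n,1}\prec\cdots\prec a_{n,n}\}$ is a chain containing $\min(P)\cup\max(P)=\{a_{n,1},a_{n,n}\}$, that $\l(a_{n,j})=k_j$ is order preserving on $A$ precisely when $\k$ is weakly increasing, and that by Corollary~\ref{cor:dim} one has $\dim\ord_{P,A}(\l)=\binom{n}{2}$ for strictly increasing $\k$ (the excluded set being all of $A$) --- already matching the global sign in the theorem.

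The monotone triangles with bottom row $\k$ are then exactly the lattice points of $\ord_{P,A}(\l)$ that avoid the subcomplex $\bad$ on which some interior strict inequality $a_{i,j}<a_{i,j+1}$ ($2\le i\le n-1$) fails. The geometric fact I would establish is that tightening one such inequality forces the whole diamond $a_{i-1,j}=a_{i,j}=a_{i,j+1}=a_{i+1,j+1}$; hence each $\bad_{i,j}=\{x\in\ord_{P,A}(\l):x_{i,j}=x_{i,j+1}\}$ is the face obtained by contracting two covers, and $F_S:=\bigcap_{(i,j)\in S}\bad_{i,j}$ is the face associated with the partition $G_S$ generated by these diamonds, which is itself a marked order polytope $\ord_{P/G_S,A/G_S}(\l_{G_S})$ by Proposition~\ref{prop:compat_face_graph}. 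Inclusion--exclusion over the interior positions then gives $\alpha(n;\k)=\sum_S(-1)^{|S|}\,\Omega_{P/G_S,A/G_S}(\l_{G_S})$; since $A$ is a chain, Theorem~\ref{thm:piecew_poly} shows this is a single polynomial on $\ordC(A)$, which we are therefore entitled to evaluate at weakly decreasing arguments.

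Next I would set $\k$ weakly decreasing, i.e.\ evaluate the polynomial at $-\l$ for an increasing auxiliary marking, and apply Theorem~\ref{thm:reciprocity} to every summand. Each term becomes $(-1)^{\dim F_S}$ times the number of relative-interior lattice points of $F_S$; negating these points yields weakly decreasing arrays $b$, i.e.\ candidates satisfying (W1) and (W2). Grouping contributions by the unique face $F(b)$, with partition $G(b)$, whose relative interior contains $b$, the coefficient of $b$ collapses to
\[
    (-1)^{\dim F(b)}\sum_{S\,:\,G_S=G(b)}(-1)^{|S|},
\]
and it remains to show that this vanishes unless $b$ is a decreasing monotone triangle and otherwise equals $(-1)^{\binom{n}{2}}(-1)^{\dd(b)}$.

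This last identity is the main obstacle. It requires analyzing, for a weakly decreasing array $b$, the family of sets $S$ of interior positions whose diamond-closure reproduces exactly the block partition $G(b)$, and evaluating the Euler-characteristic-type sum $\sum_S(-1)^{|S|}$. I expect the diamonds to cut the equal-value blocks of $b$ into pieces so that conditions (W3) (no three equal entries in a row) and (W4) (consecutive rows not sharing a value exactly once) are precisely what obstruct a free cancellation $(1-1)=0$, while each duplicate-descendant contributes one controlled sign flip, accounting for $(-1)^{\dd(b)}$; the prefactor $(-1)^{\binom{n}{2}}$ is the Ehrhart--Macdonald sign $(-1)^{\dim\ord_{P,A}(\l)}$ combined with $(-1)^{\codim F(b)}$. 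I would also keep the degenerate cases in view --- weakly increasing $\k$ with repeated bottom-row values, and the rows $i=1$ and $i=n-1$ where a diamond degenerates or meets $A$ --- reducing to strictly decreasing $\k$ by polynomiality.
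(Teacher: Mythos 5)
Your architecture is exactly the paper's: realize monotone triangles as the lattice points of the marked order polytope $\OGT_n(\k)=\ord_{\GT_n,A}(\k)$ over the Gelfand--Tsetlin poset that avoid the faces $Q_{ij}=\{a_{i,j}=a_{i,j+1}\}$, do inclusion--exclusion (equivalently, M\"obius inversion on the intersection poset of the $Q_{ij}$), apply Theorem~\ref{thm:reciprocity} termwise, and regroup the relative-interior lattice points by the face containing them. Up to that point the proposal is sound, apart from one slip: setting $a_{i,j}=a_{i,j+1}$ collapses all four cover relations of the diamond, so $Q_{ij}(\k)$ has codimension $3$, not $2$ as your phrase ``contracting two covers'' suggests (Proposition~\ref{prop:bad_face}); this is not cosmetic, because the parity of the codimension added by each diamond is precisely where $(-1)^{\dd(b)}$ comes from.

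The genuine gap is that everything you label ``the main obstacle'' and preface with ``I expect'' is the actual mathematical content of the proof, and none of it is routine. Four statements are missing. (1) A characterization of which faces of $\ordC(\GT_n)$ arise as intersections $\bigcap_{(i,j)\in S}Q_{ij}$: these are exactly the faces whose face partition is a \emph{closed} diamond poset (Lemma~\ref{lem:closed_diamond}), and establishing this requires a path-rerouting argument, not just the observation that each $Q_{ij}$ collapses a diamond. (2) The evaluation of $\sum_{S:F_S=F}(-1)^{|S|}$: via the crosscut theorem this is a M\"obius function, and the paper shows (Lemma~\ref{lem:mobius}) that it vanishes precisely when two horizontally adjacent diamonds $G_{i,j},G_{i,j+1}$ both occur in $G(F)$ and equals $(-1)^{|I(G)|}$ otherwise; this is what singles out condition (W3). (3) The identification of (W3) together with (W4) with ``$a$ lies in the relative interior of a face indexed by some $G\in\Qess$'' (Proposition~\ref{prop:open_faces}). (4) The parity identity $|I(G)|+\codim F+m\equiv\dd(F)\pmod 2$ (Lemma~\ref{lem:codim_mod2}), proved by peeling off one diamond at a time and checking that a diamond sharing no edge adds $3$ to the codimension and nothing to $\dd$, while a diamond sharing one edge adds $2$ and one duplicate-descendant. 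Finally, the degenerate bottom rows cannot merely be ``kept in view'': when $\k$ has three equal entries one must prove the inclusion--exclusion expression vanishes identically, which the paper does with a sign-reversing involution on $\{H\in\Qess:\bar H=\bar G\}$; and when $\k$ has $m$ pairs of equal entries the ambient dimension drops to $\binom{n}{2}-m$, so the extra $(-1)^m$ has to be absorbed into (4). Without these pieces the proposal yields the polynomiality of $\alpha$ and the general shape of the reciprocity, but not the theorem.
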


In this section we give a geometric proof for the result above by relating
(decreasing) monotone triangles to special order preserving maps. 
A \Emph{Gelfand-Tsetlin poset} $\GT_n$ of
order $n$ is the poset on $\tset{ (i,j) \in \Z^2 }{ 1 \le j \le i \le n}$ with
order relation 
\[
    (i,j) \preceq_{\GT_n} (k,l) \quad :\Longleftrightarrow \quad k-i \le l-j \text{
    and } j\le l.
\]
The Hasse diagram for $\GT_n$ is given in Figure~\ref{fig:GTn}.
\begin{figure}[b]
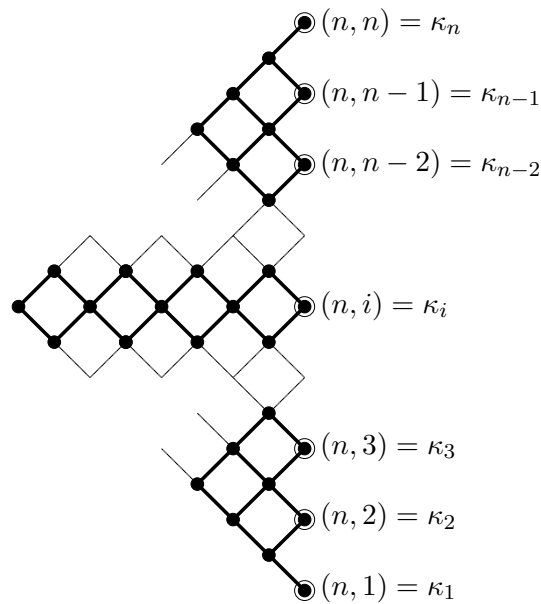

    \begin{overpic}[width=4cm]%
        {GT}
        \put(52,1){$\scriptsize(n,1)      = \kappa_1    $}
        \put(52,13){$\scriptsize(n,2)   = \kappa_2    $}
        \put(52,25){$\scriptsize(n,3)   = \kappa_3    $}
        \put(52,49){$\scriptsize(n,i) = \kappa_i    $}
        \put(52,73){$\scriptsize(n,n-2)   = \kappa_{n-2}$}
        \put(52,85){$\scriptsize(n,n-1)   = \kappa_{n-1}$}
        \put(52,97){$\scriptsize(n,n)     = \kappa_n    $}
    \end{overpic}
    \caption{Hasse diagram for the Gelfand-Tsetlin poset of order $n$ (in
    solid black). }
    \label{fig:GTn}
\end{figure}
Throughout, we let $A = \{ \kappa_1, \kappa_2, \dots,\kappa_n\} \subset \GT_n$
be the $n$-chain of elements $\kappa_j = (n,j)$ with $1 \le j \le n$, depicted
by the circled elements in Figure~\ref{fig:GTn}.  An increasing sequence $k =
(k_1 \le k_2 \le \cdots \le k_n)$ corresponds to a order preserving
map $k: A \rightarrow \Z$ by setting $k(\kappa_i) = k_i$. We call an order preserving map $a : \GT_n \rightarrow \Z$ a \Emph{weak monotone triangle}
(WMT) (also known as \Emph{Gelfand-Tsetlin pattern}). Here is the main observation.

\begin{obs}
    The collection of monotone triangles $a = (a_{ij})_{1\le j \le i \le n} \in \Z$ for given
    bottom row $\k = (k_1 \le k_2 \le \cdots \le k_n) \in \Z^n$ bijectively
    correspond to integral-valued order preserving maps $a : \GT_n \rightarrow
    \Z$ extending $\k : A \rightarrow \Z$ and such that $a_{i,j}  < a_{i,j+1}$
    for all $1 \le j < i < n$.
\end{obs}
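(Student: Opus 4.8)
The plan is to observe that the asserted bijection is essentially tautological: a triangular array $(a_{i,j})_{1 \le j \le i \le n}$ is literally the same data as a function $a : \GT_n \rightarrow \Z$, $(i,j) \mapsto a_{i,j}$. Under this identification condition (M3) is verbatim the extra requirement $a_{i,j} < a_{i,j+1}$ for $1 \le j < i < n$, and the condition that the bottom row equal $\k$ is verbatim the condition $a|_A = \k$ (recall $\kappa_j = (n,j)$). So the entire content of the Observation is to show that $a$ is order preserving for $\GT_n$ if and only if (M1) and (M2) hold.

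First I would check that (M1) and (M2) are implied by order preservation. This amounts to verifying that the north-east step $(i,j) \mapsto (i-1,j)$ and the south-east step $(i,j) \mapsto (i+1,j+1)$ are relations of $\GT_n$; both follow at once from the defining inequalities $k-i \le l-j$ and $j \le l$ (for the first, $-1 \le 0$ and $j \le j$; for the second, $1 \le 1$ and $j \le j+1$). In fact these are cover relations, and since every relation factors through such steps (see the next paragraph) they constitute exactly the Hasse diagram drawn in Figure~\ref{fig:GTn}. Consequently any order preserving $a$ satisfies $a_{i,j} \le a_{i-1,j}$ and $a_{i,j} \le a_{i+1,j+1}$, i.e. (M1) and (M2).

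The converse, that (M1) and (M2) force order preservation, is the main step and the one place the argument could go wrong. Given a relation $(i,j) \preceq_{\GT_n} (k,l)$, so $k-i \le l-j$ and $j \le l$, I would exhibit a saturated chain from $(i,j)$ to $(k,l)$ built from north-east and south-east covers and apply (M1), (M2) along it. Setting $s = l-j$ and $t = (l-j)-(k-i)$, both nonnegative by hypothesis, the path takes $t$ north-east steps followed by $s$ south-east steps; it terminates at $(k,l)$ since the net displacement is $(s-t,\,s) = (k-i,\,l-j)$. The crux is that the path must remain inside the triangle $\{(r,c) : 1 \le c \le r \le n\}$, and here the \emph{ordering} matters: performing the north-east steps first keeps the column fixed at $j$ while lowering the row only to $k-l+j \ge j$, and the subsequent south-east steps preserve $r-c = k-l \ge 0$ while raising the row only to $k \le n$. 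Thus all intermediate points are legitimate elements of $\GT_n$, the inequalities (M1)/(M2) chain up to give $a_{i,j} \le a_{k,l}$, and order preservation follows.

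With order preservation shown equivalent to (M1) and (M2), the identity map is the desired bijection, since (M3) and the bottom-row constraint match the two remaining conditions on the nose. One may note in passing that the relation $(i,j) \preceq_{\GT_n} (i,j+1)$ also lies in $\GT_n$, so order preservation already forces $a_{i,j} \le a_{i,j+1}$; condition (M3) merely upgrades this to a strict inequality for rows $i < n$, while the bottom row $i=n$ stays weakly increasing in accordance with $\k$.
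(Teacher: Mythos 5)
Your proposal is correct. The paper states this as an Observation without proof, and your argument supplies exactly the verification one would want: the identification of the array with a map on $\GT_n$ is tautological, (M3) and the bottom-row condition match on the nose, and the only substantive point — that (M1) and (M2) together are equivalent to order preservation — is handled correctly by your chain of $t$ north-east covers followed by $s$ south-east covers, where your choice to perform the north-east steps first is precisely what keeps every intermediate point inside the triangular index set.
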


To put this initial observation to good use, we pass to \Emph{real-valued} order
preserving maps and we call an order preserving map $a : \GT_n \rightarrow
\R$ extending $\k$ a \Emph{monotone triangle} if it satisfies (M3). Hence, the
monotone triangles with bottom row $\k$ form a special subset of the marked
order polytope for $\GT_n$
\[
    \OGT_n(\k) \ := \ \ord_{\GT_n,A}(\k). 
\]
Let us denote by $\bad_n = \tset{ (i,j) }{ 1 \le j < i < n }$ and for $(i,j)
\in \bad_n$ define
\[
    Q_{ij} \ = \ \set{ a \in \ordC(\GT_n) }{ a_{i,j} = a_{i,j+1} }
\]
as the set of real-valued weak monotone triangles which fail (M3)
non-exclusively at position $(i,j)$. The Hasse diagram of the face partition
$G_{ij} = G(Q_{ij})$ of $Q_{ij}$ is a \Emph{diamond} in $\GT_n$:
\begin{center}
    \begin{overpic}[angle=45,width=2cm]%
        {Gij}
        \put(30,105){$\scriptscriptstyle(i,j+1)$}
        \put(38,-7){$\scriptscriptstyle(i,j)$}
        \put(-40,48){$\scriptscriptstyle(i-1,j)$}
        \put(-100,48){$G_{ij} = $}
        \put(100,48){$\scriptscriptstyle(i+1,j+1)$}
    \end{overpic}
\end{center}
It is easy to see that $G_{ij}$ is a compatible face partition for strictly
increasing bottom row $\k$ and together with a count of parameters yields the
following geometric result.

\begin{prop}\label{prop:bad_face}
    Let $\k = (k_1 < k_2 < \cdots < k_n)$. For $(i,j) \in
    \bad_n$, the set $Q_{ij}(\k) \subseteq \OGT_n(\k)$ is a face of
    codimension $3$.
\end{prop}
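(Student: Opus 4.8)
The plan is to realize $Q_{ij}(\k)$ as the face of $\OGT_n(\k)$ cut out by the diamond face partition $G_{ij}$ and then to read off its dimension from Corollary~\ref{cor:dim}. First I would compute the interval $[(i,j),(i,j+1)]_{\GT_n}$ directly from the defining relation of $\GT_n$: an element $(k,l)$ lies in it exactly when $l \in \{j,j+1\}$ and $i+l-j-1 \le k \le i+l-j$, which produces precisely the four vertices $(i,j)$, $(i-1,j)$, $(i+1,j+1)$, $(i,j+1)$ of the diamond. Since any $a \in \ordC(\GT_n)$ with $a_{i,j} = a_{i,j+1}$ satisfies $a_{i,j} \le a(x) \le a_{i,j+1} = a_{i,j}$ for every $x$ in this interval, it is forced to be constant on the diamond; hence $Q_{ij} = F_{\GT_n}(G_{ij})$, and by Proposition~\ref{prop:face_graph} the diamond is a genuine face partition, its unique nontrivial interval being contained in itself.

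Next I would verify that $G_{ij}$ is compatible with $\k$ in the sense of Proposition~\ref{prop:compat_face_graph}. Because $\k$ is strictly increasing, all marks are distinct and condition (ii) is vacuous. For condition (i), observe that the only nontrivial component of $G_{ij}$ is the diamond $D$, whose minimum is $(i,j)$ and whose maximum is $(i,j+1)$, so that $\bigcup_{q \in D} (\GT_n)_{\succeq q} = (\GT_n)_{\succeq (i,j)}$ and $\bigcup_{p \in D}(\GT_n)_{\preceq p} = (\GT_n)_{\preceq (i,j+1)}$. When $i \le n-2$ no vertex of $A = \{\kappa_1,\dots,\kappa_n\}$ lies in $D$, so every $G_a$ is a singleton and condition (i) reduces to the automatic fact that $\l(a) < \l(b)$ forbids $b \preceq a$. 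When $i = n-1$ the vertex $(i+1,j+1) = \kappa_{j+1}$ lies in $D$, and the two nontrivial instances of condition (i) amount to checking that the intervals $[(i,j),\kappa_m]_{\GT_n}$ for $m \le j$ and $[\kappa_{m'},(i,j+1)]_{\GT_n}$ for $m' \ge j+2$ are empty; this is a short computation with the $\GT_n$-relation, since $(n-1,j) \preceq (n,m)$ forces $m \ge j+1$ and $(n,m') \preceq (n-1,j+1)$ forces $m' \le j+1$.

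With compatibility established, Proposition~\ref{prop:compat_face_graph} identifies $Q_{ij}(\k)$ with the marked order polytope $\ord_{\GT_n/G_{ij},\,A/G_{ij}}(\k_{G_{ij}})$. The diamond collapses four elements to one, so $|\GT_n/G_{ij}| = \binom{n+1}{2} - 3$, while $A/G_{ij}$ still has $n$ elements carrying pairwise distinct marks (when $i = n-1$ the block containing $\kappa_{j+1}$ simply inherits the mark $k_{j+1}$). Applying Corollary~\ref{cor:dim} to the quotient, where the distinctness of the marks forces the excluded set to be exactly $A/G_{ij}$, gives $\dim Q_{ij}(\k) = (\binom{n+1}{2} - 3) - n = \binom{n}{2} - 3$. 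The same corollary applied to $\OGT_n(\k)$ itself yields $\dim \OGT_n(\k) = \binom{n+1}{2} - n = \binom{n}{2}$, so $Q_{ij}(\k)$ has codimension $3$.

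I expect the main obstacle to be the compatibility check in the boundary case $i = n-1$, where a diamond vertex lies in the marked chain $A$ and one must confirm that collapsing it with interior vertices does not violate order preservation of the induced marking; once the explicit interval computations above are in hand, everything else is bookkeeping with the dimension formula.
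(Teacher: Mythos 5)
Your proposal is correct and follows exactly the route the paper indicates (the paper itself offers no detailed proof, only the remark that $G_{ij}$ is a compatible face partition and that a parameter count gives codimension $3$): you identify $Q_{ij}$ with $F_{\GT_n}(G_{ij})$ via the interval computation, verify compatibility including the boundary case $i=n-1$, and read off the dimension drop of $3$ from Corollary~\ref{cor:dim} applied to the quotient. The details you supply, in particular the explicit check that $[(i,j),(i,j+1)]_{\GT_n}$ is precisely the diamond and the treatment of the case where $\kappa_{j+1}$ lies in the collapsed block, are accurate and complete.
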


This yields a geometric perspective on monotone triangles.

\begin{cor}\label{cor:mt_geometry}
    For $\k = (k_1 \le k_2 \le \cdots \le k_n)$, the set of monotone triangles
    with bottom row $\k$ are precisely the lattice points in
    \begin{equation}\label{eqn:mt_geometry}
     \OGT_n(\k) \ \setminus \
    \bigcup_{(i,j)\in\bad_n} Q_{ij}(\k).
    \end{equation}
\end{cor}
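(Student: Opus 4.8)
The plan is to translate the Corollary directly from the Observation together with the notation just set up. Recall the Observation identifies the monotone triangles with bottom row $\k$ as those integral order preserving extensions $a : \GT_n \to \Z$ of $\k$ satisfying the strict inequalities (M3), namely $a_{i,j} < a_{i,j+1}$ for all $(i,j) \in \bad_n$. Meanwhile, $\OGT_n(\k) = \ord_{\GT_n,A}(\k)$ is by definition the marked order polytope of all real-valued order preserving extensions of $\k$, and its lattice points are exactly the integral-valued such extensions, i.e.\ the weak monotone triangles. So the only gap to close is that passing from ``weak monotone triangle'' to ``monotone triangle'' amounts precisely to deleting the lattice points that violate (M3) at some position in $\bad_n$.

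First I would fix $\k = (k_1 \le k_2 \le \cdots \le k_n)$ and let $a \in \OGT_n(\k) \cap \Z^{\GT_n}$ be an arbitrary lattice point, i.e.\ an integral-valued weak monotone triangle extending $\k$. By the definition of $Q_{ij}$, for each $(i,j) \in \bad_n$ we have $a \in Q_{ij}(\k)$ if and only if $a_{i,j} = a_{i,j+1}$; since $a$ is order preserving we always have $a_{i,j} \le a_{i,j+1}$, so $a \notin Q_{ij}(\k)$ is equivalent to the strict inequality $a_{i,j} < a_{i,j+1}$. Therefore $a$ lies outside the union $\bigcup_{(i,j)\in\bad_n} Q_{ij}(\k)$ exactly when the strict inequality holds at every position $(i,j) \in \bad_n$, which is condition (M3). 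This is the whole content of the set equality in \eqref{eqn:mt_geometry}.

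To finish, I would invoke the Observation: an integral order preserving extension of $\k$ satisfying (M3) is the same datum as a monotone triangle with bottom row $\k$, the bijection being the obvious identification $a \leftrightarrow (a_{i,j})$. Combining this with the previous paragraph, the lattice points of $\OGT_n(\k) \setminus \bigcup_{(i,j)\in\bad_n} Q_{ij}(\k)$ are in bijection with monotone triangles with bottom row $\k$, as claimed. One point worth a remark is that the Observation was first phrased for strictly increasing conditions on integral extensions, whereas here $\k$ is only assumed weakly increasing; but this causes no difficulty, since (M3) only constrains the interior positions $(i,j) \in \bad_n$ (those with $1 \le j < i < n$), never the bottom row, and the marked order polytope $\OGT_n(\k)$ is defined for any order preserving marking $\k$ regardless of whether it is strictly or weakly increasing.

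I do not expect a genuine obstacle here, since the Corollary is essentially a restatement packaging the Observation in geometric language. The only thing demanding care is bookkeeping: making sure the indexing set $\bad_n$ matches the positions in (M3), and that ``lattice point of $\OGT_n(\k)$'' is correctly unwound to ``integral order preserving extension of $\k$'' using the identity $\Omega_{P,A}(\l) = \#(\ord_{P,A}(\l) \cap \Z^P)$ from the introduction. The substantive geometric input—that each $Q_{ij}(\k)$ is genuinely a proper face (indeed of codimension $3$ when $\k$ is strict, as in Proposition~\ref{prop:bad_face})—is not actually needed for this set-theoretic statement; it only matters later when one wants to apply inclusion–exclusion or Ehrhart reciprocity over this boundary subcomplex.
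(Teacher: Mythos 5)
Your argument is correct and is exactly the (omitted) justification the paper has in mind: the corollary is stated without proof because it follows immediately from the Observation once one notes that $(i,j)\preceq_{\GT_n}(i,j+1)$ forces $a_{i,j}\le a_{i,j+1}$ for every point of $\OGT_n(\k)$, so that avoiding $Q_{ij}(\k)$ is equivalent to the strict inequality (M3) at position $(i,j)$. Your side remarks -- that the weakly increasing bottom row causes no issue since $\bad_n$ excludes the row $i=n$, and that Proposition~\ref{prop:bad_face} is not needed for this set-theoretic identity -- are both accurate.
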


Notice that if $\k$ contains three identical elements $k_{j} = k_{j+1} =
k_{j+2}$, then $\OGT_n(\k) \subseteq Q_{n-1,j}(\k)$ and the above set
is empty. Hence, the number of monotone triangles with bottom row $\k$ can
only be non-zero if $\k$ contains at most pairs of identical elements. 

Corollary~\ref{cor:mt_geometry} allows us to write $\alpha(n;\k)$ as a
polynomial by inclusion-exclusion on the set of faces $\tset{ Q_{ij}(\k) }{
(i,j) \in \bad_n }$. More refined, we will consider the poset of non-empty
intersections of faces of the form $Q_{ij}$ and obtain $\alpha(n;\k)$ as a
polynomial by M\"obius inversion on that poset. This will be relatively easy
once we have a characterization of the face partitions of such finite
intersections. For that we call an subposet $G \subseteq \GT_n$ a
\Emph{diamond poset} if the Hasse diagram of $G$ is a union of graphs
$G_{i,j}$. In addition, we call a diamond poset \Emph{closed} if
$G_{i,j},G_{i,j+1} \subset G$ implies $G_{i-1,j},G_{i+1,j+1} \subset G$. That
is,
\begin{center}
    \begin{overpic}[angle=45,width=3cm]%
        {Closed-if}
        \put(35,70){$\scriptstyle G_{i,j+1}$}
        \put(40,25){$\scriptstyle G_{i,j}$}
        \put(60,00){$\scriptscriptstyle(i,j)$}
        \put(60,48){$\scriptscriptstyle(i,j+1)$}
        \put(110,48){$\displaystyle \in G \quad \Longrightarrow \quad $}
    \end{overpic}
    \hspace{2cm}
    \begin{overpic}[angle=45,width=3cm]%
        {Closed-then}
        \put(10,15){$\scriptscriptstyle(i-1,j)$}
        \put(55,15){$\scriptscriptstyle(i+1,j+1)$}
        \put(110,48){$\displaystyle \in G$}
    \end{overpic}
\end{center}

\begin{lem}\label{lem:closed_diamond}
    Let $F
    \subseteq \ordC(\GT_n)$ be a non-empty face. Then 
    \[
        F \ = \ \bigcap_{(i,j) \in I} Q_{ij}
    \]
    for some $I \subseteq \bad_n$ if and only if $G(F)$ is a closed
    diamond poset.
\end{lem}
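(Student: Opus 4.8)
The plan is to work in coordinates that turn $\GT_n$ into a grid. Writing each element $(i,j)$ as $(s,t)=(j,\,i-j)$ identifies $\GT_n$ with the triangular grid $V=\{(s,t):s\ge 1,\ t\ge 0,\ s+t\le n\}$, on which the order becomes the product-type order ``$s$ increases, $t$ decreases'': the cover relations are the unit steps $(s,t)\precC(s+1,t)$ and $(s,t)\precC(s,t-1)$, and each diamond $G_{ij}$ becomes the unit box with corners $(s,t),(s+1,t),(s,t-1),(s+1,t-1)$ where $s=j,\ t=i-j$. Thus a diamond poset is exactly a union of unit boxes. Throughout I use that faces of $\ordC(\GT_n)$ correspond bijectively to their face partitions and that, by Proposition~\ref{prop:face_graph}, a subposet is a face partition precisely when it is order-convex, i.e.\ closed under taking intervals $[p,q]_{\GT_n}$ of comparable elements lying in a common connected component. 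Moreover $Q_{ij}$ has face partition $G_{ij}$, and an intersection $\bigcap_{(i,j)\in I}Q_{ij}$ is again a face whose face partition is the smallest face partition $\overline{G_I}$ containing $G_I:=\bigcup_{(i,j)\in I}G_{ij}$. For the easy implication, suppose $G(F)$ is a closed diamond poset and put $I=\{(i,j)\in\bad_n:G_{ij}\subseteq G(F)\}$. Since $G(F)$ is a union of diamonds, $G(F)=G_I$; as $F$ is a face, $G(F)$ is itself a face partition, hence equals the smallest one containing $G_I$, so $\bigcap_{(i,j)\in I}Q_{ij}$ has face partition $G(F)$ and therefore $F=\bigcap_{(i,j)\in I}Q_{ij}$.

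For the converse, let $F=\bigcap_{(i,j)\in I}Q_{ij}$, so that $G(F)=\overline{G_I}$, and let $G^{*}$ be the closure of $G_I$ under the defining rule of closed diamond posets: starting from the boxes $G_{ij}$, $(i,j)\in I$, repeatedly adjoin $G_{i-1,j}$ and $G_{i+1,j+1}$ whenever $G_{i,j}$ and $G_{i,j+1}$ are present, adjoining nothing when the required index leaves $\bad_n$. By construction $G^{*}$ is a union of boxes and is closed. I will show $G(F)=G^{*}$, which exhibits $G(F)$ as a closed diamond poset. First, $G^{*}\subseteq G(F)$: in grid coordinates the two boxes adjoined in each step are exactly the intervals between corners already present in a common connected component (e.g.\ $G_{i-1,j}=[(s,t-1),(s+1,t-2)]_{\GT_n}$ with $s=j,\ t=i-j$, both endpoints lying in $G_{i,j}\cup G_{i,j+1}$), so order-convexity of $G(F)$ forces them in; by induction on the number of steps, $G^{*}\subseteq G(F)$. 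For the reverse inclusion it suffices to know that $G^{*}$ is itself a face partition, since then it contains the smallest face partition $\overline{G_I}=G(F)$ containing $G_I$.

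Everything therefore reduces to the Key Lemma: a closed diamond poset is order-convex, hence a face partition. I would prove this componentwise, by showing that a connected closed union of boxes $K$ has a unique minimal element $\hat p$ and a unique maximal element $\hat q$ and satisfies $K=[\hat p,\hat q]_{\GT_n}$; order-convexity is then immediate, because for comparable $p\preceq q$ in $K$ one has $\hat p\preceq p\preceq q\preceq\hat q$ and intervals nest. Existence and uniqueness of the extremes follow from connectedness together with closedness, since two distinct minimal (or maximal) corners of a connected union of boxes produce a $2\times2$ configuration whose filling merges them. The reverse containment $[\hat p,\hat q]_{\GT_n}\subseteq K$ is the main obstacle: one must check that the $2\times2$ rule fills the entire min--max region, and here the interplay with the boundary diagonal $s+t=n$ requires care, because near the bottom row of $\GT_n$ the would-be boxes fall outside $V$ and the interval $[\hat p,\hat q]_{\GT_n}$ is a rectangle truncated along that diagonal rather than a full rectangle. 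The fact to verify is that every unit step of this truncated rectangle is still an edge of a box of $K$---the boxes accounting for the diagonal corners being in $K$ supply precisely those edges---so that no cover relation lying outside a full diamond is ever contracted. Granting the Key Lemma, the two inclusions give $G(F)=G^{*}$ and complete the proof.
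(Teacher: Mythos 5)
Your forward implication and the inclusion $G^{*}\subseteq G(F)$ are both correct and match the paper's reasoning. The problem is the converse, where everything is made to rest on your ``Key Lemma'' that a closed diamond poset is a face partition --- and that lemma is not actually proved: you explicitly defer the containment $[\hat p,\hat q]_{\GT_n}\subseteq K$ as ``the main obstacle,'' and the structural claim you propose to prove it with is false. A connected closed diamond poset need \emph{not} have a unique minimal element, a unique maximal element, or be an interval. Take $G=G_{i,j}\cup G_{i+2,j+1}$ (e.g.\ $G_{2,1}\cup G_{4,2}$ in $\GT_5$): the two diamonds share only the vertex $(i+1,j+1)$, so $G$ is connected; the closedness condition is vacuous because the hypothesis $G_{i,j},G_{i,j+1}\subseteq G$ concerns the \emph{other} diagonal adjacency of boxes (in your grid coordinates, closedness fills $2\times 2$ blocks only for box pairs at positions $(s,t)$ and $(s+1,t-1)$, not $(s,t)$ and $(s+1,t+1)$); yet $G$ has the two incomparable minimal elements $(i,j)$ and $(i+2,j+1)$ and is certainly not an interval of $\GT_n$. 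So ``two distinct minimal corners produce a $2\times 2$ configuration whose filling merges them'' is wrong, and the componentwise interval description collapses at the first step. (This $G$ \emph{is} still order-convex, so the Key Lemma itself may survive, but it needs a genuinely different proof --- e.g.\ showing that for any monotone path in $G$ from $p$ to $q$ the closedness rule forces every unit box of the rectangle $[p,q]$ into $G$, which is a nontrivial rerouting argument you have not supplied.)

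For comparison, the paper avoids this issue entirely by arguing in the opposite direction: writing $G'$ for the union of all diamonds contained in $G(F)$, it assumes $G(F)\neq G'$, extracts a straight path $P$ in $G(F)$ meeting $G'$ only at its endpoints, and derives a contradiction by taking a path $R$ in $G'$ joining the same endpoints that minimizes the deviation $\rho(R)=\sum_{r\in R}|r_2-\ell|$ and rerouting $R$ through diamonds to decrease $\rho$. That shows every cover relation of $G(F)$ lies in a diamond (closedness of $G(F)$ then being immediate from Proposition~\ref{prop:face_graph}), with no need to determine which unions of diamonds are face partitions. If you want to keep your architecture, you must either prove the Key Lemma by the rectangle-filling argument indicated above or switch to an argument of the paper's type.
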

\begin{proof}
    The face $F$ is exactly the intersection of all facets for which the
    corresponding cover relation is in $G(F)$. If $G(F)$ is a closed diamond
    poset, then every cover relation is contained in at least one diamond and
    hence $F$ is exactly the intersection of all $Q_{ij}$ for which $G_{ij}
    \subseteq G(F)$.

    For the converse, we can assume that $G=G(F)$ is
    connected and we let $G^\prime = \bigcup\{ G_{ij} : F \subseteq Q_{ij} \}$
    be the largest diamond poset contained in $G$. If $G \not= G^\prime$, then by Proposition \ref{prop:face_graph}
    there is a non-trivial directed path $P = p_0p_1\dots p_k$ that meets $G^\prime$
    only in a  connected component containing $p_0$ and $p_k$.  In
    particular no edge of $P$ is contained in a diamond of $G$ and,
    furthermore, $P$ cannot contain vertices $(i,j)$ and $(i,j+1)$.  Indeed,
    by Proposition~\ref{prop:face_graph}, this would imply that $G_{ij}
    \subset G^\prime$ which contradicts $P \cap G^\prime = \{p_0,p_k\}$. It
    follows that $c = p_{i+1} - p_{i} \in \Z^2$ is a constant direction for
    all $i = 0,1,...,k-1$.
    
    Let us assume that $c = (+1,0)$. Thus, every vertex $p_h$ along $P$ has
    constant second coordinate $\ell = (p_h)_2$. Let $R$ %
    be an undirected(!) path connecting $p_0$ and $p_k$ in $G^\prime$
    such that 
    \[
        \rho(R) \ = \ \sum_{r \in R} |r_2 - \ell|
    \]
    is minimal. Such a path exists as $p_0$ and $p_k$ are in the same
    connected component of the underlying undirected graph of $G^\prime$, and $\rho(R) > 0$.
     (Indeed, we have $\rho(R) = 0$ iff $R = P$ after orienting
    edges). But then $R$ contains a sequence of vertices $(i,j),(i-1,j),(i,j+1)$ with $j<l$ or $(i,j),(i+1,j+1),(i,j+1)$ with $j\geq l$ and the value
    of $\rho(R)$ can be reduced by rerouting along $G_{i,j}$. 
    \begin{center}
        \begin{overpic}[width=1.5cm,angle=45]%
            {Corner}
            \put(30,105){$\scriptscriptstyle(i,j+1)$}
            \put(38,-10){$\scriptscriptstyle(i,j)$}
            \put(-40,47){$\scriptscriptstyle(i-1,j)$}
            \put(100,47){$\scriptscriptstyle(i+1,j+1)$}
        \end{overpic}
        \hspace{3cm}
        \begin{overpic}[width=1.5cm,angle=315]%
            {CornerR}
            \put(30,105){$\scriptscriptstyle(i,j+1)$}
            \put(38,-10){$\scriptscriptstyle(i,j)$}
            \put(-40,47){$\scriptscriptstyle(i-1,j)$}
            \put(100,47){$\scriptscriptstyle(i+1,j+1)$}
        \end{overpic}
    \end{center}
    Hence, by contradiction, $R = P$ and $G = G^\prime$.   
\end{proof}

Let us define $\Q$ as the set of all closed diamond subposets of $\GT_n$
ordered by reverse inclusion. In light of the above lemma, we have
\[
    \Q \ \cong \ \biggl\{ \bigcap_{(i,j)\in I} Q_{ij} \;:\; I
    \subseteq \bad_n \biggr\}
\]
is a meet-semilattice with greatest element $\hat{1} = \hat{1}_\Q :=
\emptyset$ corresponding to $\ordC(\GT_n)$.  The M\"obius function of $\Q$ can
now be dealt with in the language of diamond posets. Let us write
\[
I(G) \ = \ \set{ (i,j) \in \bad_n }{ G_{ij} \subseteq G}
\]
for $G \in \Q$.

\begin{lem} \label{lem:mobius}
    Let $G \in \Q$ and $I = I(G)$.  Then
    \[
        \mu_\Q(G,\hat{1}) \ = \ 
        \begin{cases}
            0, & \text{if } (i,j),(i,j+1) \in I\\
            (-1)^{|I|}, &  \text{otherwise.}
        \end{cases}
    \]
\end{lem}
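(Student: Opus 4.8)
The plan is to identify the interval $[G,\hat{1}]$ in $\Q$ with a lattice of closed subsets of $I=I(G)$ and then to compute its Möbius function by an atom argument. Since $\Q$ is ordered by reverse inclusion with top $\hat{1}=\emptyset$, the interval $[G,\hat{1}]$ consists of all closed diamond subposets $H\subseteq G$, that is, all $H\in\Q$ with $I(H)\subseteq I$. By Lemma~\ref{lem:closed_diamond} the assignment $H\mapsto I(H)$ is a bijection between $\Q$ and the closed subsets of $\bad_n$, where a subset $I'$ is called \emph{closed} when $(k,l),(k,l+1)\in I'$ implies $(k-1,l),(k+1,l+1)\in I'$. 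This bijection reverses orders, so $[G,\hat{1}]$ is dual to the family $L_I$ of closed subsets $I'\subseteq I$ ordered by inclusion (bottom $\emptyset$, top $I$), and since the Möbius function is invariant under reversing all orders, $\mu_\Q(G,\hat{1})=\mu_{L_I}(\emptyset,I)$. First I would record that $L_I$ is a lattice: closed sets are stable under intersection, so the meet is intersection and the join of $I_1',I_2'$ is the closure $\mathrm{cl}(I_1'\cup I_2')$, the smallest closed set containing both.

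If $I$ contains no two horizontally adjacent diamonds $(k,l),(k,l+1)$, then the closure rule never applies to any subset of $I$, so every $I'\subseteq I$ is closed and $L_I$ is the Boolean lattice $2^{I}$. Consequently $\mu_{L_I}(\emptyset,I)=(-1)^{|I|}$, which is the ``otherwise'' case of the claim. For the remaining case, where $I$ contains a horizontally adjacent pair and I must show $\mu_{L_I}(\emptyset,I)=0$, I would apply Rota's crosscut theorem. The atoms of $L_I$ are exactly the singletons $\{(k,l)\}$ with $(k,l)\in I$ (a singleton carries no horizontal pair and hence is closed), and the join of a set of atoms is the closure of the corresponding subset of $I$, so the crosscut theorem yields
\[
    \mu_{L_I}(\emptyset,I)\ =\ \sum_{\substack{B\subseteq I\\ \mathrm{cl}(B)=I}}(-1)^{|B|}.
\]

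I would then kill this sum with a fixed-point-free, sign-reversing involution $B\mapsto B\mathbin{\triangle}\{e\}$ for a carefully chosen $e\in I$. Choose a horizontally adjacent pair $(i,j),(i,j+1)\in I$ in the \emph{topmost} row $i$ carrying such a pair, and set $e=(i-1,j)$, the upper vertex forced by the closure rule; since $I$ is closed we have $e\in I$, and by minimality of $i$ the row $i-1$ carries no horizontal pair of $I$, so $e$ lies in no horizontal pair and therefore never occurs as the trigger of a closure step. Because $e$ is never a trigger, a short induction on the length of a closure derivation shows $\mathrm{cl}(B)\setminus\{e\}\subseteq\mathrm{cl}(B\setminus\{e\})$ for every $B$: any derived element other than $e$ is forced by a horizontal pair not containing $e$, so its derivation survives the deletion of $e$. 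Hence if $\mathrm{cl}(B)=I$ then $\mathrm{cl}(B\setminus\{e\})\supseteq I\setminus\{e\}$, and as $(i,j),(i,j+1)\in I\setminus\{e\}$ force $e=(i-1,j)$ we in fact get $e\in\mathrm{cl}(B\setminus\{e\})$, so $\mathrm{cl}(B\setminus\{e\})=I$. Since enlarging $B$ only enlarges the closure (which stays inside the closed set $I$), both $B\cup\{e\}$ and $B\setminus\{e\}$ satisfy $\mathrm{cl}(\cdot)=I$ whenever $B$ does; thus $B\mapsto B\mathbin{\triangle}\{e\}$ is a fixed-point-free involution on the index set of the sum that changes $|B|$ by one, and the sum vanishes.

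The main obstacle is precisely this last step: the closure relation cascades, so a careless choice of $e$ (say, the upper vertex of a pair that is not topmost) can itself trigger a rule, and then deleting $e$ may destroy the derivation of some other element, breaking the involution. Selecting $e$ from a topmost pair—so that $e$ never triggers the rule—is what makes both the induction and the involution go through, and everything else (the lattice structure of $L_I$, the identification of the atoms, and the Boolean case) is routine.
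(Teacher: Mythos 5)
Your proof is correct, and while it rests on the same tool as the paper's (Rota's crosscut theorem) and singles out essentially the same distinguished diamond (the one in the row above the topmost horizontally adjacent pair), the execution of the vanishing case is genuinely different. The paper works directly with the atoms of the interval $[G,\hat{1}]_\Q$, i.e.\ the \emph{maximal} proper closed diamond subposets of $G$, and shows that they all still contain $G_{i_0,j_0}$; hence they have a common upper bound strictly below $\hat{1}$, every $N_k$ vanishes, and the crosscut sum is empty. You instead dualize to the lattice $L_I$ of closed subsets of $I$, take the crosscut at the opposite end (the singletons, i.e.\ the coatoms of $[G,\hat{1}]_\Q$), which turns $\mu_\Q(G,\hat{1})$ into the generating-set sum $\sum_{\mathrm{cl}(B)=I}(-1)^{|B|}$, and then cancel that sum with a sign-reversing involution toggling $e=(i-1,j)$. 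The paper's route is shorter because the sum is shown to have no terms at all, whereas yours requires verifying that the closure operator never uses $e$ as a trigger; in exchange, your reformulation as a closure system on index sets is more transparent and makes the Boolean case and the zero case fit into one uniform framework. One point you should not gloss over: identifying $[G,\hat{1}]_\Q$ with closed subsets of $I$ requires that $I\bigl(\bigcup_{(i,j)\in I'}G_{ij}\bigr)=I'$ for closed $I'$, which is not automatic since a union of diamonds can contain a diamond not in the indexing set (the four diamonds $G_{k-1,l-1},G_{k-1,l},G_{k+1,l},G_{k+1,l+1}$ together cover $G_{k,l}$). This is rescued precisely by closedness of $I'$ --- the horizontal pair $(k-1,l-1),(k-1,l)$ forces $(k,l)\in I'$ --- but it deserves a sentence; the paper's own proof leans on the same identification in its Boolean case, where it is immediate because no horizontal pair is present.
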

\begin{proof}
    \newcommand\A{\mathcal{A}}
    Let $\A $ be the collection of \Emph{atoms} of the interval
    $[G,\hat{1}]_\Q$, that is, the elements of $\Q$ covering $G$. To prove the
    first claim, we will use the Crosscut Theorem~\cite[Sec.~3.1.9]{rota}
    \[
        \mu_\Q(G,\hat{1}) \ = \ N_0 \ - \ N_1 \ + \ \cdots \ +(-1)^i N_i 
    \]
    where $N_k$ is the number of $k$-element subsets $S \subseteq \mathcal{A}$
    such that $\hat{1}$ is the smallest joint upper bound for the elements in
    $S$. Now if there is some $Q \prec \hat{1}_\Q$ such that every $H \in \A$
    is smaller than $Q$, then this implies $N_k = 0$ for all $k$ and the claim
    follows. 
    
    To that end, let $(i_0,j_0) \in I(G)$ with $(i_0+1,j_0),(i_0+1,j_0+1) \in
    I(G)$ and $i_0$ minimal. We claim that $(i_0,j_0) \in I(H)$ for every $H
    \in \mathcal{A}$. Indeed, assume that $(i_0,j_0) \not\in I(H)$. By
    Lemma~\ref{lem:closed_diamond}, we have that $H \cup
    G_{i_0,j_0}$ is a diamond poset but not closed, as $H\in \mathcal{A}$ by
    assumption. This forces $G_{i_0,j_{0}-1}$ or $G_{i_0,j_{0}+1}$ to be in
    $G$, and establishing then the  closedness condition has to
    introduce some $G_{i,j}\subseteq G$ with $i < i_0$.  However, this contradicts the
    choice of $i_0$ and we can take $Q = Q_{i_0j_0}$.

    For the other case, observe that the closedness condition for $G$ is
    vacuous. This stays true for every diamond subposet which are in
    bijection to the subsets of $I(G)$. Hence $[G,\hat{1}]_\Q$ is isomorphic
    to the boolean lattice on $|I(G)|$ elements.
\end{proof}

This yields a partial explanation of condition (W3): A weak monotone triangle
$a : \GT_n \rightarrow \R $ with strictly increasing bottom row satisfies (W3)
and (W4) if and only if $a \in \relint F$ for some face $F$ with $G = G(F) \in
\Q$ and 
$\mu_\Q(F,\hat{1}) \not= 0$. For that reason, let us define 
$\Qess \subseteq \Q$ as the \Emph{essential} subposet of $\Q$ with
\[
    \Qess \ = \ \set{ G \in \Q }{\mu_\Q(G,\hat{1}) \not= 0}
\]
Hence, we can identify $\Qess$ with
the collection of closed diamond posets $G$ of $\GT_n$ such that $G_{i,j} \cup
G_{i,j+1} \not\subseteq G$.
In particular, $\hat{1} \in \Qess$ and from the definition of M\"obius
functions it follows that
$\mu_{\Qess}(G,\hat{1}) = \mu_\Q(G,\hat{1})$ for all $G \in \Qess$.

With that knowledge, we can now write the number of lattice points in
\eqref{eqn:mt_geometry} as a polynomial in $\k$. For the sake of clarity, let
us emphasize that the combinatorics of $\Q_{\GT_n,A}(\k)$ is independent of
the actual choice of a strictly order preserving map $\k : A \rightarrow \R$.
In this case, every $G \in \Qess$ is a compatible face partition of a distinct
face of $\OGT_n(\k)$ which we can identify with 
the marked order polytope $\ord_{\GT_n/G,A/G}(\k)$.

\begin{thm}\label{thm:mt_polynomial}
    For $\k = (k_1 \le k_2 \le \cdots \le k_n)$, the number of monotone
    triangles with bottom row $\k$ is given by
    \[
    \alpha(n;\k) \ = \ \sum_{G \in \Qess} (-1)^{|I(G)|} \,
    \Omega_{\GT_n/G,A/G}(\k),
    \]
    and thus is a polynomial.  In particular, $\alpha(n;\k) = 0$ whenever $k_j =
    k_{j+1} = k_{j+2}$.
\end{thm}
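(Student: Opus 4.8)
The plan is to realise $\alpha(n;\k)$ as an alternating sum of lattice‑point counts over the faces $Q_{ij}(\k)$ via inclusion--exclusion, and then to reorganise this sum along the meet‑semilattice $\Q$ using its Möbius function. I would first reduce to the generic case of a \emph{strictly} increasing bottom row $\k=(k_1<k_2<\cdots<k_n)$, where Proposition~\ref{prop:bad_face} applies and, as noted before the theorem, every $G\in\Qess$ is a compatible face partition of a distinct face of $\OGT_n(\k)$, so that the combinatorial type of $\OGT_n(\k)$ together with the subcomplex $\bigcup_{(i,j)}Q_{ij}(\k)$ is independent of the chosen $\k$. For such $\k$, Corollary~\ref{cor:mt_geometry} identifies $\alpha(n;\k)$ with the number of lattice points of $\OGT_n(\k)$ avoiding all the faces $Q_{ij}(\k)$.

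Next I would run inclusion--exclusion on the intersection semilattice of the family $\tset{Q_{ij}(\k)}{(i,j)\in\bad_n}$. Lemma~\ref{lem:closed_diamond} shows this semilattice is exactly $\Q$: every finite intersection $\bigcap_{(i,j)\in S}Q_{ij}(\k)$ is the face $F_G(\k)$ whose face partition $G$ is the closed diamond poset generated by $S$. The Möbius form of inclusion--exclusion on this intersection semilattice (with top $\hat1$ corresponding to $\OGT_n(\k)$ itself) then yields
\[
    \alpha(n;\k) \ = \ \sum_{G\in\Q}\mu_\Q(G,\hat1)\,\bigl|F_G(\k)\cap\Z^{\GT_n}\bigr|.
\]
Lemma~\ref{lem:mobius} collapses this onto $\Qess$, replacing $\mu_\Q(G,\hat1)$ by $(-1)^{|I(G)|}$. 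Finally, since $\k$ is strictly increasing each such $G$ is compatible, so Proposition~\ref{prop:compat_face_graph} provides a lattice‑preserving linear isomorphism $F_G(\k)\cong\ord_{\GT_n/G,A/G}(\k)$, whence $|F_G(\k)\cap\Z^{\GT_n}|=\Omega_{\GT_n/G,A/G}(\k)$ and the claimed identity holds for strictly increasing $\k$.

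For polynomiality and the passage to weakly increasing $\k$, I would note that each summand $\Omega_{\GT_n/G,A/G}(\k)$ is, by Theorem~\ref{thm:piecew_poly}, a polynomial on each cell of the canonical subdivision of $\ordC(A)$; since $A$ is an $n$‑chain this subdivision has a single maximal cell whose relative interior is precisely the strictly increasing maps, so the entire right‑hand side restricts to one fixed polynomial there. Thus $\alpha(n;\k)$ agrees with a polynomial on the Zariski‑dense set of strictly increasing integer points, forcing equality of the two polynomials and hence persistence of the formula for all weakly increasing $\k$. The final assertion is immediate from the remark following Corollary~\ref{cor:mt_geometry}: if $k_j=k_{j+1}=k_{j+2}$ then $\OGT_n(\k)\subseteq Q_{n-1,j}(\k)$, so the set in \eqref{eqn:mt_geometry} is empty and $\alpha(n;\k)=0$.

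I expect the main obstacle to be the bookkeeping that turns the raw inclusion--exclusion over subsets $S\subseteq\bad_n$ into a sum indexed by $\Q$ with the correct Möbius coefficients: one must verify that the intersection semilattice of the $Q_{ij}(\k)$ is genuinely $\Q$ (this is the content of Lemma~\ref{lem:closed_diamond}) and that the assignment $S\mapsto F_G(\k)$ respects the order, so that the abstract Möbius‑inversion identity applies verbatim. A second essential point, easy to overlook, is checking that the combinatorial type — and hence every face identification $F_G(\k)\cong\ord_{\GT_n/G,A/G}(\k)$ — is uniform over the open cell of strictly increasing $\k$; this uniformity is exactly what legitimises treating the right‑hand side as a single polynomial before invoking density, and it is why the argument must be carried out in the generic (strict) regime rather than directly at weakly increasing arguments.
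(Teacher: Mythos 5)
Your treatment of strictly increasing $\k$ is essentially the paper's: inclusion--exclusion over the intersection semilattice $\Q$ of the faces $Q_{ij}(\k)$, collapsed onto $\Qess$ via Lemma~\ref{lem:mobius}, with each face identified with $\ord_{\GT_n/G,A/G}(\k)$ through Proposition~\ref{prop:compat_face_graph}. The gap is in the passage to weakly increasing $\k$. The theorem asserts an identity between \emph{counting functions}: the left side is the actual number of monotone triangles with bottom row $\k$, and each $\Omega_{\GT_n/G,A/G}(\k)$ is an actual lattice-point count. Your Zariski-density argument only shows that the polynomial interpolating the left side on strictly increasing arguments coincides with the polynomial interpolating the right side there; it says nothing about the values of either side at degenerate $\k$, because by Theorem~\ref{thm:piecew_poly} the summands are merely \emph{piecewise} polynomials on $\ordC(A)$, and their values on boundary cells (where some $k_j=k_{j+1}$) need not equal the evaluation of the open-cell polynomial. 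Moreover, once $\k$ is not strict the partitions $G\in\Qess$ are no longer compatible with $\k$, distinct $G$ can induce the same face of $\OGT_n(\k)$, and the intersection-semilattice bookkeeping you set up for strict $\k$ no longer applies.

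This is exactly where the paper does its real work: for $\k$ with pairs but no triples it completes each $G\in\Qess$ to a compatible face partition $\bar G$ and checks that $G\mapsto\bar G$ is injective with order-isomorphic image, so the formula still counts the lattice points of \eqref{eqn:mt_geometry}; and for $\k$ with three identical entries, where \eqref{eqn:mt_geometry} is empty, it shows that the right-hand side --- now a signed sum of strictly positive counts --- vanishes by constructing a sign-reversing perfect matching on $\set{H\in\Qess}{\bar H=\bar G}$ (adding or removing a suitably chosen diamond $G_{ij}$). Your proposal contains no substitute for either step; in particular the ``in particular'' clause is verified only for the count, not for the formula, and the vanishing of the alternating sum at triples is precisely what the subsequent reciprocity proof relies on. To repair the argument you would either have to carry out this combinatorial analysis or prove that each $\Omega_{\GT_n/G,A/G}$ is given by a single polynomial on the closure of the strict cell, which is not established anywhere in the paper.
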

\begin{proof}
    If $\k$ is strictly order preserving, then the above formula is exactly
    the M\"obius inversion of the function $f_G(\k) =
    \Omega_{\GT_n/G,A/G}(\k)$ for $G \in \Qess$ by
    Corollary~\ref{cor:mt_geometry} and Lemmas~\ref{lem:closed_diamond}
    and~\ref{lem:mobius}.

    If $\k$ has two, but no three identical entries, then $G \in \Qess$ is not
    compatible with $\k$ but can be completed to a compatible face partition
    $\bar{G}$. It is easy to see that $\bar{G}$ arises from $G$ by only adding
    the cover relations $(n,j) \prec_{\GT_n} (n-1,j)$ and $ (n-1,j)
    \prec_{\GT_n} (n,j+1)$ for every $1 \le j < n$ with $k_j = k_{j+1}$. The
    map $G \mapsto \bar{G}$ is injective on $\Qess$ and the image is a poset
    under reverse inclusion isomorphic to $\Qess$. Hence, the above formula
    counts the number of lattice points in \eqref{eqn:mt_geometry}.
    
    If $\k$ has three identical entries, then \eqref{eqn:mt_geometry} is the
    empty set and $\alpha(n;\k) = 0$. Consequently, we have to show that the
    right hand side is also identically zero for all such $\k$. It suffices to
    assume that $\k$ has exactly three identical entries as every bottom row with more than three identical elements belongs to the boundary of some cell for which the interior consists of bottom rows with exactly three identical elements. So, let us assume that $k_j = k_{j+1} = k_{j+2}$ are
    the only equalities for $\k$. Let $G \in \Qess$ and $\bar{G}$ its
    completion to a face partition compatible with $\k$. Then
    $\Omega_{\GT_n/\bar{G},A/\bar{G}}(\k)$ appears in the sum on the right hand side with multiplicity
    \[
        \sum\set{(-1)^{|I(H)|} }{ H \in \Qess, \bar{H} = \bar{G} }.
    \]
    For any such $H$, let $(i,j) \in \bad_n$ be the lexicographic smallest
    such that $G_{i+1,j} \cup G_{i+1,j+1} \subseteq \bar{H} = \bar{G}$ (existence follows from $k_j =
    k_{j+1} = k_{j+2}$).
    Hence $G_{i,j} \subseteq \bar{H}$ by closedness. We distinguish two
    cases:
    \begin{compactenum}[1.]
    \item Assume that $G_{ij} \subseteq H$, then the largest diamond subposet
    $H^\prime \subset H$ not containing $G_{ij}$ is closed as $H \in \Qess$,
    and $\bar{H^\prime} = \bar{G}$ as $G_{i+1,j} \cup G_{i+1,j+1} \subseteq \bar{H}$.
    \item If $G_{ij} \not\subseteq H$, then set $H^\prime = H \cup G_{ij}$. By
    the minimality of $(i,j)$ we have that $H^\prime$ is closed diamond and 
    $\bar{H^\prime} = \bar{G}$.
    \end{compactenum}
    This defines a perfect matching on $\set{H\in\Qess}{\bar{H}=\bar{G}}$ and
    $|I(H)| = |I(H^\prime)| \pm 1$ shows that the multiplicity of
    $\Omega_{\GT_n/\bar{G},A/\bar{G}}(\k)$ is zero.
\end{proof}

Coming back to the reciprocity statement for monotone triangles, we note that
$b = (b_{ij})_{1 \le j \le i \le n}$ is a decreasing monotone triangle
if and only if $-b : \GT_n \rightarrow \R$ is a weak monotone triangle
satisfying (W3) and (W4). 

\begin{prop}\label{prop:open_faces}
    Let $a = (a_{ij})_{1 \le j \le i \le n} \in \Z$ be a weak monotone 
    triangle with bottom row $\k = (k_1 \le k_2 \le \cdots \le k_n)$ with no three identical elements. 
    Then $-a$ is a DMT with bottom row $-\k$ if and only if there is a unique $G \in
    \Qess$ with corresponding face $F \subseteq \OGT_n(\k)$ such that
    $a \in \relint F$.
\end{prop}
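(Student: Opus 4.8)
The plan is to reduce the statement to a combinatorial analysis of the equality pattern of $a$ along the cover relations of $\GT_n$ and to match this pattern against the defining property of $\Qess$. First I would record that, by the remark preceding the proposition, $-a$ is a DMT with bottom row $-\k$ precisely when $a$ satisfies conditions (W3) and (W4); conditions (W1) and (W2) hold automatically, since they are nothing but order-preservation of $a$ along the two families of cover relations $(i,j)\precC(i-1,j)$ and $(i,j)\precC(i+1,j+1)$. On the geometric side, every point of a polytope lies in the relative interior of a unique face, so there is a unique minimal face $F\subseteq\OGT_n(\k)$ with $a\in\relint F$, and the uniqueness asserted in the statement is exactly this uniqueness. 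Writing $\hat G=G(F)$ for its face partition, the proposition becomes the equivalence
\[
    a \text{ satisfies (W3) and (W4)} \quad\Longleftrightarrow\quad \hat G = \bar G \text{ for some } G \in \Qess,
\]
where $G\mapsto\bar G$ is the completion from the proof of Theorem~\ref{thm:mt_polynomial}.

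Next I would describe $\hat G$ explicitly: its Hasse diagram consists of the cover relations $p\precC q$ with $a(p)=a(q)$. I would split these equality edges into the \emph{forced} ones — the edges $(n,j)\precC(n-1,j)$ and $(n-1,j)\precC(n,j+1)$ arising from a bottom-row pair $k_j=k_{j+1}$, which are forced by interlacing for every point of $\OGT_n(\k)$ and are exactly the edges adjoined by the completion — and the remaining \emph{free} equality edges, which form a subposet $G$. Two translations then fall out from the facts that each interior cover relation lies in exactly two diamonds (the NE edge $(i,j)\precC(i-1,j)$ in $G_{i,j}$ and $G_{i-1,j-1}$, the SE edge $(i,j)\precC(i+1,j+1)$ in $G_{i,j}$ and $G_{i+1,j}$) and that $G_{i,j}\subseteq\hat G$ is equivalent to the single equality $a_{i,j}=a_{i,j+1}$. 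Indeed, a triple $a_{i,j}=a_{i,j+1}=a_{i,j+2}$ in a row is the same as $G_{i,j},G_{i,j+1}\subseteq\hat G$, so (W3) holds iff $\hat G$ has no two horizontally adjacent diamonds; and $G$ fails to be a diamond poset exactly when some free equality edge is \emph{lonely}, i.e.\ completes neither of its two diamonds.

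The heart of the argument is the claim that, for $a$ satisfying (W3), a free equality edge between consecutive rows $i-1,i$ is lonely if and only if the common value $v$ occurs exactly once in row $i-1$ and exactly once in row $i$ — that is, iff it witnesses a violation of (W4). For the NE edge $a_{i,j}=a_{i-1,j}=v$, loneliness means $a_{i,j+1}>v$ (else $G_{i,j}\subseteq\hat G$) and $a_{i-1,j-1}<v$ (else $G_{i-1,j-1}\subseteq\hat G$); monotonicity of the rows together with (W3) then forbids any further $v$, since a second $v$ at $a_{i,j-1}$ would, through the diamond it forces, push $a_{i-1,j-1}$ up to $v$, while a second $v$ at $a_{i-1,j+1}$ would push $a_{i,j+1}$ down to $v$, both contradictions; hence $v$ appears exactly once in each of the two rows. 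The SE case is symmetric, and the converse uses the interlacing $a_{i,j}\le a_{i-1,j}=v\le a_{i,j+1}$ to locate the unique resulting equality edge and shows it is lonely. At the bottom boundary $i=n$ only the row-$(n-1)$ diamond of each edge exists, but the argument is unchanged, and a forced bottom edge is never lonely because its value fills the pair $k_j=k_{j+1}$ and hence occurs twice in row $n$.

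Assembling these, (W4) holds iff there are no lonely free edges iff $G$ is a diamond poset, and (W3) holds iff $G$ has no two horizontally adjacent diamonds; since the closedness requirement of Lemma~\ref{lem:closed_diamond} is then vacuous, this is precisely $G\in\Qess$, and $\hat G=\bar G$ by the description of the completion. The main obstacle I expect is the case analysis of the key lemma: keeping the interlacing inequalities and the forced-diamond implications straight, handling the boundary rows where only one diamond is available, and treating the forced edges from bottom-row pairs, where the no-three-identical hypothesis on $\k$ is exactly what keeps the forced $V$-shapes disjoint and prevents them from manufacturing spurious lonely edges.
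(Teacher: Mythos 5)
Your overall strategy is the same as the paper's: pass to the face partition $\hat G$ of the carrier face of $a$, observe that (W3) corresponds to the absence of two horizontally adjacent diamonds and (W4) to every relevant equality edge lying in a diamond; your key lemma (under (W3), a free equality edge is lonely iff its common value occurs exactly once in each of the two rows it joins) is a correct and usefully more detailed version of the one-sentence argument the paper gives for the (W4) step. The gap is in the assembly. You define $G$ to be the subposet of \emph{free} equality edges and claim that the absence of lonely free edges is equivalent to $G$ being a diamond poset. This fails whenever a diamond of $\hat G$ contains a forced edge, which happens exactly when $k_j=k_{j+1}$ and $a_{n-1,j}=a_{n-1,j+1}$: then $G_{n-1,j}\subseteq\hat G$, but one of its four edges, namely $(n-1,j)\precC(n,j+1)$, is forced, so your free-edge set contains only three of the four edges of $G_{n-1,j}$ and (since $(n,j+1)\precC(n-1,j+1)$ lies in no other diamond) is not a diamond poset --- yet no free edge is lonely and $-a$ is a perfectly good DMT. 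Concretely, for $n=3$, $\k=(1,1,2)$ and $a$ with rows $(1)$, $(1,1)$, $(1,1,2)$: here $-a$ is a DMT, the unique $G\in\Qess$ is $G_{2,1}$ (which contains the forced edge $(2,1)\precC(3,2)$), but your $G$ consists of the three free equality edges $(2,1)\precC(1,1)$, $(1,1)\precC(2,2)$, $(3,2)\precC(2,2)$ and is not a diamond poset, so your chain of equivalences would wrongly reject this $a$. The same conflation appears inside your definition of ``lonely'': in the key lemma you test whether a diamond lies in $\hat G$, but in the assembly you need it to lie in the free-edge set; these differ precisely in the situation above.

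The repair is what the paper actually does: take $G$ to be the union of all diamonds contained in $\hat G$ (equivalently, delete from $\hat G$ only those cover relations reaching into the bottom row that are \emph{not} contained in a diamond of $\hat G$), so that $G$ is allowed to contain forced edges. With that definition, ``no lonely free edge'' does yield $\hat G=G\cup\{\text{forced edges}\}=\bar G$, and your key lemma plugs in unchanged. A secondary point: the uniqueness asserted in the statement is uniqueness of $G\in\Qess$ with $\bar G=\hat G$, not merely uniqueness of the carrier face $F$; this requires the injectivity of $G\mapsto\bar G$ on $\Qess$ (used in the proof of Theorem~\ref{thm:mt_polynomial} and relying on $\k$ having no three identical entries), which your write-up does not address.
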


\begin{proof}
    Let $F$ be the face of $\OGT_n(\k)$ that has $a$ in the relative interior
    and let $G^\prime = G(F)$ be its compatible face partition. If $\k$ is not
    strictly increasing, then $G^\prime$ contains cover relations that reach
    into $A$.  Let $G \subseteq G^\prime$ be the subposet which arises by
    deleting those which are not contained in a diamond. Then $G$ is a face
    partition and $\Ext_{\GT_n,A}(\l) \cap F_{\GT_n}(G) = F$.

    Now (W4) is equivalent to the condition that every cover relation in $G$
    is contained in a diamond. Otherwise there are indices $(i,j),(i+1,k) \in
    \bad_n$ with $k
    \in \{j,j+1\}$ such that $b_{i,j} = b_{i+1,k}$ and $b_{i,j-1} < b_{i,j} <
    b_{i,j+1}$  and $b_{i,k-1} < b_{i,k} < b_{i,k+1}$ which contradicts (W4).
    Since $\k$ does not contain three identical elements, $G$ is the unique
    diamond poset that gives rise to $F$.  Moreover, $G \in \Qess$ if and only
    if every point in the relative interior of $F$ satisfies (W3).
\end{proof}

Let us extend the notion of duplicate-descendants to real-valued weak monotone
triangles satisfying (W3) and define $\dd(F)$ for a non-empty face $F \subseteq
\OGT_n(\k)$ as the number of duplicate-descendants for an arbitrary $a
\in \relint F$. 

\begin{lem}\label{lem:codim_mod2}
    Let $\k = (k_1 \le k_2 \le \cdots \le k_n)$ with no three identical
    elements and let %
    $m$ be the number of pairs of identical elements. 
    Let $G \in \Qess$ with corresponding face $F
    \subseteq \OGT_n(\k)$.  Then
    \[
    |I(G)| \ + \ \codim F  + m \ \equiv \ \dd(F) \quad \mod\,2
    \]
\end{lem}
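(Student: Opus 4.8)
The plan is to turn the congruence into a parity statement that I can verify by inserting the diamonds of $G$ one at a time. First I would handle the bottom row. Since $\k$ has no three equal entries, its $m$ equal pairs are disjoint and each produces an adjacent equal pair in the last row of every $a\in\relint F$; such pairs are duplicate-descendants by definition and are unaffected by the choice of $a$. Hence $\dd(F)=m+d$, where $d$ counts the \emph{interior} duplicate-descendants, and the assertion is equivalent to
\[
    \codim F \ \equiv \ |I(G)| + d \pmod 2 .
\]

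Before the induction I would record two facts. Because $G\in\Qess$ has no two horizontally adjacent diamonds, the closedness condition never applies, so $\Qess$ is exactly the collection of unions $\bigcup_{(i,j)\in I}G_{ij}$ over subsets $I\subseteq\bad_n$ containing no pair $(i,j),(i,j+1)$; every subfamily is again admissible, so I may build $G$ by inserting its diamonds one by one, and I would do so in order of \emph{decreasing} row index. For the codimension I would use the class count: if $\bar G$ denotes the completion of $G$ by the forced cover relations at equal bottom entries, then $\dim F$ is the number of connected components of $\bar G$ avoiding $A$, so inserting a diamond changes $\codim F$ by the number of independent class-merges it produces, corrected by the change in the number of $A$-meeting components.

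The heart of the argument is a single insertion. Adding $D=G_{i_0,j_0}$ merges its four corners $(i_0,j_0)$, $(i_0,j_0{+}1)$, $(i_0{-}1,j_0)$ and $\rho=(i_0{+}1,j_0{+}1)$ into one class. Inserting bottom-up is what keeps this clean: the diamonds sharing the two \emph{upper} edges of $D$ and the upper corner $(i_0{-}1,j_0)$ have not yet been inserted, so that corner is still isolated, and the only way two of the four corners can already lie in a common class is a merge of $\rho$ with $(i_0,j_0)$ or with $(i_0,j_0{+}1)$, coming from a diamond --- or, in the bottom two rows, a forced equal pair --- sitting directly below $D$ along the edge $\{(i_0,j_0),\rho\}$ or $\{(i_0,j_0{+}1),\rho\}$. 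This is precisely the condition that $D$ be a duplicate-descendant. Consequently the number $c_D$ of distinct classes among the four corners is $3$ when $D$ is a duplicate-descendant and $4$ otherwise. Moreover any corner joined to $A$ is joined through $\rho$, so at most one of these classes meets $A$; the number of $A$-meeting components therefore does not change and $\codim F$ increases by exactly $c_D-1$.

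Finally I would assemble the induction. Inserting $D$ raises $|I(G)|$ by $1$; it raises $d$ by $1$ exactly when $D$ is a duplicate-descendant --- its status is fixed once the rows beneath it are present and it does not alter the status of diamonds inserted earlier below it --- and it raises $\codim F$ by $c_D-1\in\{2,3\}$. In both cases $\Delta\codim F-\Delta|I(G)|-\Delta d$ is even, and as the identity holds trivially for $G=\emptyset$, where all three quantities vanish, it holds throughout $\Qess$. I expect the insertion step to be the only real obstacle, and within it the diamonds in rows $n-1$ and $n-2$, where the forced bottom-row edges may attach $\rho$ (and possibly a row-$i_0$ corner) to $A$; the bottom-up order, together with the fact that every such attachment passes through $\rho$, is what keeps the $A$-meeting count inert and confines the parity change to the dichotomy $c_D\in\{3,4\}$.
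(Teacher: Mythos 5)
Your proof is correct and follows essentially the same route as the paper's: an induction on $|I(G)|$ that inserts one diamond at a time and matches the parity dichotomy $(\Delta\codim F,\Delta\dd)\in\{(3,0),(2,1)\}$, with the $m$ bottom-row pairs absorbed into the base case. Your bottom-up insertion order is a nice touch, since it explicitly justifies the existence of a diamond sharing at most one edge with the rest of the configuration (the new diamond's upper edges and upper corner are untouched, and its two lower edges cannot both be shared because the corresponding diamonds below are horizontally adjacent), a point the paper's proof asserts without argument.
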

\begin{proof}
    We induct on $l = |I(G)|$. For $l = 0$, we have $F =
    \OGT_n(\k)$ which is of codimension $0$ and $\dd(F) = m$ by
    definition.
    
    For $l > 0$ there is a diamond $G_{ij} \subseteq G$ which shares at most
    one edge with another diamond or a ``half-diamond'' coming from a pair of
    equal numbers at the bottom row. Let $G^\prime \subset G$ be the largest
    diamond poset not containing $G_{ij}$ and let $F^\prime$ be the
    corresponding face. By induction, the claim holds for $G^\prime$ and
    $|I(G)| = |I(G^\prime)| + 1$.

    If $G_{ij} \cap G(F^\prime)$ does not contain an edge, then $\dd(F) =
    \dd(F^\prime)$ and $\codim F = \codim F^\prime + 3$.  In the remaining
    case, $G_{ij}$ shares exactly one edge with $G(F^\prime)$ and
    thus $\dd(F) = \dd(F^\prime) + 1$. On the other hand, adding $G_{ij}$ to
    $G(F^\prime)$ binds two degrees of freedom and $\codim F = \codim F^\prime +
    2$.
\end{proof}

\begin{proof}[Proof of Theorem~\ref{thm:MT_reciprocity}]
    By Theorem~\ref{thm:mt_polynomial}, $\alpha \equiv 0$ restricted to
    the set of order preserving maps $-\k : A \rightarrow \Z$ with three identical
    entries. As $\alpha$ is a polynomial, it follows that this extends to
    $\alpha(n;\k)$. This proves the claim in this case as $\mathcal{W}_n(\k) =
    \emptyset$.

    Let us assume that $\k$ has $m$ pairs of identical elements. Then $\dim
    \OGT_n(-\k) = \tbinom{n}{2} - m$.  For \mbox{$G \in \Qess$} let us denote
    by $F_G(-\k)$ the corresponding non-empty face of $\OGT_n(-\k)$.  By
    Theorem~\ref{thm:mt_polynomial} and Theorem~\ref{thm:reciprocity}, we have
    \[
    \alpha(n;\k) \ = \ (-1)^{\tbinom{n}{2}}
        \sum_{G \in \Qess} (-1)^{|I(G)| + m + \codim F_G(-\k)} | \relint
        F_G(-\k) \cap \Z^{\GT_n}| 
    \]
    where we use $\codim F_G(-\k) = \tbinom{n}{2} -m - \dim F_G(-\k)$. The claim
    now follows from Proposition~\ref{prop:open_faces} and Lemma~\ref{lem:codim_mod2}.
\end{proof}

\section{Extending partial graph colorings}\label{sec:color}

Let $G=(V,E)$ be a graph and $k$ a positive integer. A \Emph{$k$-coloring} of
$G$ is simply a map $c\colon V\rightarrow [k]$. The coloring is called
\Emph{proper} if $c(u) \not= c(v)$ for every edge $uv \in E$.  It is well-known
that the number of proper $k$-colorings of $G$ is given by a polynomial in
$k$, the \Emph{chromatic polynomial} $\chi_G(k)$.  Generalizing these notions,
Murty and Herzberg \cite{herzberg} considered the problem of counting
extensions of partial colorings of $G$.
 For a given subset $A \subseteq V$ and a partial
coloring $c : A \rightarrow [k]$ an \Emph{extension} of $c$ of size $m$ is an
$m$-coloring $\widehat{c} : V \rightarrow [m]$ such that $\widehat{c}(a) =
c(a)$ for all $a \in A$. If $\widehat{c}$ is moreover a proper coloring, then
$\widehat{c}$ is called a proper extension. Such extensions only exist for $m
\ge k$.

\begin{thm}[{\cite[Thm.~1]{herzberg}}] \label{thm:col_ext}
    Let $G = (V,E)$ be a graph and $c : A \rightarrow [k]$ a partial coloring
    for $A \subseteq V$. Then either there are no proper extensions or there
    is a polynomial $\chi_{G,c}(m)$ of degree $|V| - |A|$ such that 
    \[
        \chi_{G,c}(m) \ = \ \#\set{ \widehat{c} : V \rightarrow [m] }{
        \widehat{c} \text{ is a proper coloring with } \widehat{c}(a) = c(a)
        \text{ for all } a \in A}
    \]
    for all $m \ge k$.
\end{thm}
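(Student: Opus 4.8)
The plan is to reduce this to a deletion–contraction or inclusion–exclusion argument, but the cleanest route is to recognize the count of proper extensions as an evaluation of a chromatic-type polynomial and then argue about its degree. First I would set up the right ambient object: given the partial coloring $c : A \rightarrow [k]$, a proper extension $\widehat{c} : V \rightarrow [m]$ is a proper coloring of $G$ that restricts to the fixed coloring $c$ on $A$. The natural move is to introduce, for each $m \ge k$, the quantity $\chi_{G,c}(m)$ as defined in the statement and to prove it agrees with a fixed polynomial. The first obstacle is that the ``either no proper extensions or a polynomial'' dichotomy must be handled: no proper extension exists precisely when $c$ already fails to be proper on $A$ (i.e.\ $c(a)=c(a')$ for some edge $aa'$ with $a,a' \in A$), so I would dispose of that case immediately and assume henceforth that $c$ is a proper coloring of the induced subgraph $G[A]$.

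Assuming $c$ is proper on $A$, I would prove polynomiality by inclusion–exclusion over the ``bad'' events, following the standard derivation of the chromatic polynomial. For each edge $uv \in E$, let $B_{uv}$ be the set of colorings $\widehat{c} : V \rightarrow [m]$ with $\widehat{c}(u) = \widehat{c}(v)$ that extend $c$; then the number of proper extensions is
\[
    \chi_{G,c}(m) \ = \ \sum_{S \subseteq E} (-1)^{|S|}\, N_S(m),
\]
where $N_S(m)$ counts extensions of $c$ that are constant on every connected component of the subgraph $(V,S)$. The count $N_S(m)$ is either $0$ (if merging components forces two distinct $A$-values to coincide, or forces an $A$-value to collide with a prescribed color in an inconsistent way) or equals $m^{r(S)}$, where $r(S)$ is the number of components of $(V,S)$ that contain no vertex of $A$ — because components meeting $A$ have their color forced by $c$, while each free component can be colored arbitrarily in $[m]$. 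Since each $N_S(m)$ is a monomial in $m$ (or zero), $\chi_{G,c}(m)$ is a polynomial in $m$, and this polynomial identity, once established for all $m \ge k$, is exactly what the theorem asserts.

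The remaining and genuinely delicate point is the degree claim $\deg \chi_{G,c} = |V| - |A|$, which is where I expect the main work to lie. The degree is the largest value of $r(S)$ arising with a nonzero coefficient after cancellation. Taking $S = \emptyset$ gives the term $m^{r(\emptyset)}$ with $r(\emptyset) = |V| - |A|$ (the free vertices, assuming each lies in its own trivial component and no singleton free vertex is also in $A$); I would argue this is the unique top-degree contribution, so no cancellation can lower the degree. Concretely, every nonempty $S$ merges at least one pair of vertices, and one checks that contracting an edge strictly decreases the number of free components unless it joins two $A$-vertices, but such $S$ contribute $N_S = 0$ whenever the merge is color-inconsistent and otherwise contract within $A$; a careful bookkeeping shows $r(S) < |V| - |A|$ for all $S \neq \emptyset$ that contribute. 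The hard part will be ruling out accidental cancellation of the leading coefficient and handling edges internal to $A$ cleanly; I would manage this by passing to the reduced graph $G/{\sim}$ obtained by contracting each color class of $c$ on $A$ to a single vertex, reducing the problem to the chromatic polynomial of a graph on $|V| - |A| + (\text{number of colors used})$ vertices with certain vertices pre-colored, whose leading term is transparently $m^{|V|-|A|}$.
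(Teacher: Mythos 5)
Your argument is correct, but it takes a genuinely different route from the paper. You prove polynomiality by the Whitney-style inclusion--exclusion over edge subsets $S \subseteq E$, observing that $N_S(m)$ is $0$ or $m^{r(S)}$ with $r(S)$ the number of components of $(V,S)$ disjoint from $A$; the degree claim then follows because $S=\emptyset$ contributes $m^{|V|-|A|}$ with coefficient $+1$, while every other $S$ with $N_S \not\equiv 0$ must contain an edge meeting $V\setminus A$ (edges inside $A$ force $N_S=0$ since $c$ is proper on $G[A]$) and hence has $r(S)<|V|-|A|$ --- so there is no competing top-degree term and no cancellation issue at all; your worry about ``accidental cancellation'' dissolves once you note the leading term is unique. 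The paper instead takes the suspension $G'$ of $G$, decomposes the proper extensions according to the compatible acyclic orientation $\sigma$, identifies the colorings inducing $\sigma$ with strict order-preserving extensions on the poset $P_\sigma$, and invokes the marked-order-polytope reciprocity (Theorem~\ref{thm:reciprocity}) to write $\chi_{G,c}(m)$ as a sum of polynomials $(-1)^{|V\setminus A|}\Omega_{P_\sigma,A'}(-c'_m)$ with positive leading coefficients, exhibiting one summand of degree $|V|-|A|$. Your approach is more elementary and self-contained, and even pins down the leading coefficient as exactly $1$; the paper's detour through marked order polytopes buys the reciprocity statement (Theorem~\ref{thm:col_recip}) essentially for free from the same decomposition, which your expansion does not provide. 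One small point of care: state explicitly that ``no proper extensions'' means for every $m$, which happens iff $c$ is improper on $G[A]$ (properness of $c$ on $G[A]$ guarantees an extension only for sufficiently large $m$, but the polynomial identity still holds for all $m \ge k$, possibly evaluating to $0$).
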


We give an alternative proof of their result and a
combinatorial interpretation for $\chi_{G,c}(-m)$ extending the combinatorial
reciprocity of Stanley~\cite{stanley73} for the ordinary chromatic polynomial.
Recall that an \Emph{orientation} $\sigma$ of $G$ assigns every edge $e$ a
head and a tail.  An orientation is \Emph{acyclic} if there are no directed
cycles. An orientation $\sigma$ is weakly compatible with a given coloring $c:
V \rightarrow [m]$ if $\sigma$ orients an edge $e = uv$ along its color
gradient, that is, from $u$ to $v$
whenever $c(u) < c(v)$. 

\begin{thm} \label{thm:col_recip}
    Let $G=(V,E)$ be a graph and let $c: A\rightarrow [k]$ be a partial
    coloring for $A \subseteq V$. Let $A_1, A_2,\dots, A_k$ be the partition
    of $A$ into color classes induced by $c$. For $m \ge k$ we have that
    $(-1)^{|V\setminus A|}\,\chi_{G,c}(-m)$ is the number of pairs
    $(\widehat{c},\sigma)$ where $\widehat{c} : V \rightarrow [m]$ is a
    coloring extending $c$ and $\sigma$ is a weakly compatible acyclic
    orientation such that there is no directed path with both endpoints in
    $A_i$ for some $i = 1,2,\dots,k$.
\end{thm}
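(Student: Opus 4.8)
The plan is to connect partial graph coloring to the marked order polytope machinery developed in Section~\ref{sec:mop}, thereby deriving Theorem~\ref{thm:col_recip} as an instance of the combinatorial reciprocity in Theorem~\ref{thm:reciprocity}. The key bridge is Stanley's classical dictionary: proper colorings of $G$ correspond to pairs (acyclic orientation, order preserving map on the resulting poset). More precisely, a proper $m$-coloring $\widehat{c}: V \to [m]$ refines into data consisting of an acyclic orientation $\sigma$ of $G$ together with a strictly order preserving map into an $m$-chain on the poset $P_\sigma$ whose cover relations are the oriented edges. Summing over acyclic orientations that are compatible with the partial coloring $c$, I would express $\chi_{G,c}(m)$ as a sum of functions $\Omega_{P_\sigma, A}$ counting strict extensions, where $A \subseteq P_\sigma$ carries the fixed values $c$.

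\textbf{First I would} set up the decomposition. For a fixed orientation $\sigma$ weakly compatible with $c$, the oriented graph defines a preorder; acyclicity makes it a poset $P_\sigma$ on $V$ (possibly after identifying the structure appropriately). The partial coloring $c$ prescribes integer values on $A$, but crucially $c$ need not be injective on $A$: two vertices $a, a' \in A_i$ share a color. The weak compatibility condition ensures $c$ is order preserving on $A$ with respect to $\sigma$. A proper coloring extending $c$ and inducing orientation $\sigma$ is then exactly a \emph{strict} order preserving extension of $c: A \to \Z$ to $P_\sigma$, in the sense defined before Theorem~\ref{thm:reciprocity}: equality of values across a cover relation is permitted only between vertices sandwiched between two equally-colored elements of $A$. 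Since properness forbids equal colors on adjacent vertices except when forced by $A$, this matches the strictness notion precisely. After adjoining a global minimum and maximum (as in the order polynomial reduction) to bound the extensions, I obtain $\chi_{G,c}(m) = \sum_\sigma \Omega_{P_\sigma, A}(\l_m)$, summed over acyclic orientations weakly compatible with $c$, which immediately re-proves Theorem~\ref{thm:col_ext} via Theorem~\ref{thm:piecew_poly}.

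\textbf{The hard part will be} the sign bookkeeping when applying reciprocity, and correctly identifying which orientation-coloring pairs survive on the negative side. Applying Theorem~\ref{thm:reciprocity} to each summand, $(-1)^{\dim \ord_{P_\sigma,A}(\l_m)}\Omega_{P_\sigma,A}(-\l_m)$ counts strict extensions, i.e.\ relative-interior lattice points, and the dimension equals $|V \setminus A|$ uniformly (every unmarked vertex is a free coordinate, since $c$ has no forced coincidences of the sandwiching type generically). The subtle point is that evaluating at $-\l_m$ and summing over $\sigma$ must recombine into a count of pairs $(\widehat{c}, \sigma)$ where $\widehat{c}$ is now an \emph{arbitrary} (not necessarily proper) coloring and $\sigma$ is a weakly compatible acyclic orientation. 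The mechanism is the standard one behind Stanley's negative-value interpretation: on the reciprocal side, a given coloring $\widehat{c}$ may be compatible with several acyclic orientations, and the alternating signs over orientations collapse to count each compatible pair exactly once. I expect the genuinely new technical content to be the condition excluding directed paths with both endpoints in a common color class $A_i$: this is the shadow of the strictness condition, since a directed path between two vertices of $A$ with equal prescribed color would force a chain of equalities that the strict-extension definition disallows unless the endpoints sandwich equal marks---which they do, being in the same $A_i$. I would verify carefully that ruling out such paths is exactly equivalent to the relative-interior condition surviving the reciprocity, handling the inclusion-exclusion over orientations so that only acyclic orientations with no monochromatic directed $A_i$-path contribute a net $+1$ per compatible coloring.
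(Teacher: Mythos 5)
Your overall strategy is the paper's: decompose proper extensions over the acyclic orientations they induce, realize the colorings inducing a fixed $\sigma$ as strict order preserving extensions on the poset $P_\sigma$ (after adjoining $\hat 0,\hat 1$ so that $A'=A\cup\{\hat0,\hat1\}$ contains all minima and maxima), and apply Theorem~\ref{thm:reciprocity} summand by summand. But there is a genuine gap at the decisive step, the one you wave at with ``the mechanism is the standard one.'' Theorem~\ref{thm:reciprocity} concerns $\Omega_{P_\sigma,A'}(-\l)$, i.e.\ the polynomial evaluated with \emph{all} marks negated. The quantity $\chi_{G,c}(-m)$, by contrast, negates only the single variable $m$ (the top of the color range), while the marks $c(a)\in[k]$ stay fixed. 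So after writing $\chi_{G,c}(m)=\sum_\sigma(-1)^{|V\setminus A|}\Omega_{P_\sigma,A'}(-c'_m)$, substituting $-m$ produces $\Omega_{P_\sigma,A'}(-c'_{-m})$, and $-c'_{-m}$ sends $\hat0\mapsto 0$, $\hat1\mapsto m-1$, but $a\mapsto -c(a)$ for $a\in A$ --- this is \emph{not} an order preserving mark on $A'$, and no form of Ehrhart reciprocity applies to it directly. The paper closes this gap with a separate argument: since relabeling colors by a permutation of $[k]$ bijects extensions, the polynomial is independent of the intermediate mark values $c_1,\dots,c_{r-1}$, so one may replace $-c'_{-m}$ by the order preserving mark $c'_{m-2}-\chi_A$, whose extension count is exactly the number of weakly compatible pairs $(\widehat c,\sigma)$. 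Without this independence observation your proof does not go through; and your description of the mechanism as ``alternating signs over orientations collapse'' is also off --- every summand carries the same sign $(-1)^{|V\setminus A|}$ and no cancellation among orientations occurs.

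Two smaller points. First, your displayed identity $\chi_{G,c}(m)=\sum_\sigma\Omega_{P_\sigma,A}(\l_m)$ is wrong as written: $\Omega$ counts all order preserving extensions, whereas proper colorings correspond to the \emph{strict} ones, so the correct forward identity is already the reciprocal form $\sum_\sigma(-1)^{|V\setminus A|}\Omega_{P_\sigma,A'}(-c'_m)$ (you do write this later, so this may be a slip rather than a misunderstanding). Second, you leave the non-injective case of $c$ --- the source of the ``no directed $A_i$-path'' condition --- as something to ``verify carefully.'' You correctly locate where it comes from, but the clean way to do it (and the paper's way) is to contract each non-adjacent equally colored pair $a,b$ upfront, noting that acyclic orientations of $G$ descending to acyclic orientations of $G_{ab}$ are exactly those without a directed path between $a$ and $b$; this also keeps $A'$ a chain in each $P_\sigma$, which is what makes the independence-of-intermediate-marks argument available.
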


In the case that no two vertices of $A$ get the same color, the result
simplifies.

\begin{cor}
    Let $G=(V,E)$ be a graph and $A \subseteq V$. If $c : A \rightarrow [k]$
    is injective and $m \ge k$, then $|\chi_{G,c}(-m)|$ equals the number of
    pairs $(\widehat{c},\sigma)$ where $\widehat{c}$ is an $m$-coloring
    extending $c$ and $\sigma$ is an acyclic orientation weakly compatible
    with $\widehat{c}$.
\end{cor}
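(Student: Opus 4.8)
The plan is to derive the corollary as a direct specialization of Theorem~\ref{thm:col_recip}, whose only new ingredient is that the side condition on directed paths becomes vacuous under injectivity. First I would record that if $c : A \rightarrow [k]$ is injective, then each color class $A_i = c^{-1}(i)$ is either empty or a singleton $\{a\}$. Hence a ``directed path with both endpoints in $A_i$'' can only arise as a directed path from $a$ to $a$ for a single vertex $a$.

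The key step is to observe that for a weakly compatible \emph{acyclic} orientation $\sigma$ no such (nontrivial) path can exist. Any directed path from $a$ to $a$ that uses at least one edge is a directed closed walk and therefore contains a directed cycle, contradicting acyclicity of $\sigma$; the degenerate one-vertex path does not witness a violation. Thus, when $c$ is injective, every acyclic orientation weakly compatible with an extension $\widehat{c}$ automatically satisfies the condition that there be no directed path with both endpoints in any $A_i$, so that condition may be dropped from the count in Theorem~\ref{thm:col_recip} without changing it.

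Finally I would combine this with the sign bookkeeping. By Theorem~\ref{thm:col_recip}, $(-1)^{|V\setminus A|}\chi_{G,c}(-m)$ equals the number $N$ of pairs $(\widehat{c},\sigma)$ with $\widehat{c} : V \rightarrow [m]$ extending $c$ and $\sigma$ an acyclic orientation weakly compatible with $\widehat{c}$, the extra condition being vacuous by the previous step. Since $N$ is a nonnegative integer, $(-1)^{|V\setminus A|}\chi_{G,c}(-m) = N \ge 0$ forces $|\chi_{G,c}(-m)| = N$, which is exactly the asserted count. I do not expect a genuine obstacle here: the statement is a clean specialization, and the only point requiring care is the interpretation of the path condition under injectivity, namely confirming that a nontrivial directed path returning to its start is a directed cycle and is therefore excluded by acyclicity.
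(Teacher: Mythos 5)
Your proof is correct and follows exactly the route the paper intends: the corollary is stated there as an immediate specialization of Theorem~\ref{thm:col_recip}, with the path condition becoming vacuous because each color class is a singleton and a nontrivial directed path from a vertex to itself would contain a directed cycle. Your handling of the sign, concluding $|\chi_{G,c}(-m)| = N$ from $(-1)^{|V\setminus A|}\chi_{G,c}(-m) = N \ge 0$, is also the intended bookkeeping.
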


It is also possible to give an interpretation for the evaluations at $-m$ for
$m < k$. Here, we constrain ourselves to one particularly interesting
evaluation.

\begin{cor} \label{cor:special_acyclic}
    Let $G = (V,E)$ be a graph and $c : A \rightarrow [k]$ a partial coloring
    for $A \subseteq V$. Then $|\chi_{G,c}(-1)|$ equals the number of acyclic
    orientations of $G$ for which there is no directed path from $a$ to $b$
    whenever $a, b \in A$ with
    $c(a) \geq c(b)$.     
\end{cor}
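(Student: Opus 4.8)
The plan is to interpret $\lvert\chi_{G,c}(-1)\rvert$ as the number of regions of a hyperplane arrangement and to match those regions with the acyclic orientations in the statement. First I would dispose of the degenerate case: if $c$ is not proper on the subgraph induced by $A$, then there are no proper extensions at all and $\chi_{G,c}\equiv0$; simultaneously, an edge $aa'$ inside a color class is a directed path between two vertices of equal color, and whichever way it is oriented it violates the path condition, so the claimed count is also $0$. Hence I may assume that $c$ is proper on $A$.

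Next I would realize $\chi_{G,c}$ as the characteristic polynomial of the affine arrangement $\mathcal{A}$ in $\R^{V\setminus A}$ whose hyperplanes are $x_u=x_v$ for every edge $uv$ with $u,v\in V\setminus A$, together with $x_v=c(a)$ for every edge $av$ with $a\in A$. Counting over a box establishes this: for every integer $m$ exceeding both $k$ and $|V|$, the lattice points of $[m]^{V\setminus A}$ avoiding $\mathcal{A}$ are exactly the proper extensions $\widehat c$ of $c$, so $\chi_{G,c}(m)$ and $\chi_{\mathcal{A}}(m)$ agree for infinitely many $m$ and therefore as polynomials. This is the point where the fixed, non-scaling positions $c(a)$ of the hyperplanes $x_v=c(a)$ must be accommodated, which is harmless since they lie inside the box for all large $m$.

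I would then apply the region-counting principle of Zaslavsky, which in this setting is precisely the Ehrhart-Macdonald reciprocity already in use, applied cell by cell to the subdivision of $\R^{V\setminus A}$ induced by $\mathcal{A}$, as in the proof of Theorem~\ref{thm:col_recip}. It yields that the number of regions of $\mathcal{A}$ equals $(-1)^{|V\setminus A|}\chi_{\mathcal{A}}(-1)=(-1)^{|V\setminus A|}\chi_{G,c}(-1)$, so $\lvert\chi_{G,c}(-1)\rvert$ is the number of regions. It remains to biject the regions of $\mathcal{A}$ with the desired orientations. A point of a region assigns a real value $x_v$ to each free vertex and the value $c(a)$ to each $a\in A$; orienting every edge from its smaller to its larger coordinate produces an acyclic orientation $\sigma$, and a directed path from $a$ to $b$ with $a,b\in A$ would force $c(a)=x_a<x_b=c(b)$, so $\sigma$ has no such path when $c(a)\ge c(b)$. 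Conversely any acyclic orientation meeting this condition is realized by choosing coordinates along a linear extension respecting the prescribed heights $c(a)$, and two points induce the same orientation exactly when they lie in the same region.

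The main obstacle I anticipate is making this last dictionary exact: showing that ``no directed path from $a$ to $b$ with $c(a)\ge c(b)$'' is precisely the nonemptiness criterion for a sign vector of $\mathcal{A}$ to be a region, i.e. that every orientation satisfying it is realizable by real coordinates respecting the heights $c(a)$, whereas any violating orientation contains a directed path forcing a contradictory chain $c(a)<\cdots<c(b)\le c(a)$. Once this realizability criterion is in place, the region count translates verbatim into the stated orientation count. I note that one can instead stay entirely within the orientation decomposition used for Theorem~\ref{thm:col_recip} and evaluate its polynomial identity at $-1$; the subtlety then is that individual orientations contribute strict order polynomials whose values at $-1$ are not uniformly $\pm1$, so the region count only emerges from the full sum, which is exactly what the characteristic-polynomial viewpoint makes transparent.
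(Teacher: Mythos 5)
Your argument is correct in outline and takes a genuinely different route from the paper's. You realize $\chi_{G,c}$ as the characteristic polynomial of the affine graphical arrangement $\mathcal{A}$ in $\R^{V\setminus A}$ and apply Zaslavsky's region count at $-1$, whereas the paper stays inside its own machinery: it passes to the suspension $G^\prime$, decomposes proper extensions by the acyclic orientation they induce, writes $\chi_{G,c}(m)=\sum_\sigma(-1)^{|V\setminus A|}\Omega_{P_\sigma,A^\prime}(-c^\prime_m)$ via Theorem~\ref{thm:reciprocity}, and then exploits the independence of this expression from the intermediate marking values to evaluate at the all-zero marking, where $\Omega_{P_\sigma,A^\prime}(\mathbf{0})=1$ for every admissible $\sigma$. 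In particular, your closing worry about the orientation decomposition --- that individual orientations contribute values at $-1$ that are not uniformly $\pm 1$ --- does not materialize: after the shift to the zero marking each summand is exactly $1$, so the paper's count is orientation-by-orientation with no aggregation needed. What your route buys is independence from the marked-order-polytope formalism and a very direct picture of the orientations as regions; what it costs is that the $c(a)\ge c(b)$ versus $c(a)>c(b)$ issue must be absorbed into the arrangement rather than handled by the paper's contraction of equal-colored nonadjacent vertices (your version does handle it, since a directed path between equal-colored marked vertices in either direction already forces a strict chain $c(a)<\cdots<c(b)$).

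Two steps remain assertions in your write-up. First, the identity $\chi_{G,c}(m)=\chi_{\mathcal{A}}(m)$ for large $m$ requires the Whitney/M\"obius computation that every nonempty flat $X$ of $\mathcal{A}$ meets $[m]^{V\setminus A}$ in exactly $m^{\dim X}$ lattice points; this is true here because each flat is a coordinate identification with some blocks pinned to values in $[k]$, but it should be said. Second, the realizability criterion you rightly flag as the main obstacle: an acyclic orientation of $G$ with no directed path from $a$ to $b$ whenever $c(a)\ge c(b)$ is induced by a point of some region. This is true and can be proved by processing vertices of $V\setminus A$ in topological order and choosing $x_v$ strictly above all already-placed in-neighbors and above $L(v)=\max\{c(a): a\in A \text{ reaches } v\}$, but strictly below $U(v)=\min\{c(b): v \text{ reaches } b\in A\}$; the path condition is exactly the statement that $L(v)<U(v)$, and $U$ is weakly monotone along directed edges, so the interval never closes. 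Until that lemma is written out your proof has a genuine (though acknowledged and fillable) gap at its crucial bijection.
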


Furthermore, choosing $A = \emptyset$, we see that $\chi_{G,c} = \chi_G$ and
the above theorem specializes to the classical reciprocity for chromatic
polynomials.

\begin{cor}[{\cite[Thm.~1.2]{stanley73}}] 
    For a graph $G$, $|\chi_G(-m)|$ equals the number of pairs $(c,\sigma)$
    for which  $c$ is an $m$-coloring and $\sigma$ is a weakly compatible
    acyclic orientation. In particular, $|\chi_G(-1)|$ is the number of
    acyclic orientations of $G$.
\end{cor}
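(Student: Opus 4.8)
The plan is to obtain this statement as the special case $A = \emptyset$ of Theorem~\ref{thm:col_recip}. I would first observe that when $A$ is empty the partial coloring $c$ is the empty map, so extending it imposes no constraint and $\chi_{G,c}$ coincides with the ordinary chromatic polynomial $\chi_G$; formally one takes $k = 0$, so that the hypothesis $m \ge k$ is satisfied by every positive integer $m$. Since $A = \emptyset$ there are no color classes $A_i$, and hence the side condition of Theorem~\ref{thm:col_recip} forbidding a directed path with both endpoints in some $A_i$ is vacuous. Consequently $(-1)^{|V\setminus A|}\chi_{G,c}(-m) = (-1)^{|V|}\chi_G(-m)$ counts exactly the pairs $(c,\sigma)$ with $c$ an $m$-coloring and $\sigma$ a weakly compatible acyclic orientation.

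Next I would pass from this signed identity to the absolute value claimed in the statement. Because the right-hand side is a cardinality it is a nonnegative integer, so $(-1)^{|V|}\chi_G(-m) \ge 0$ and therefore equals $|\chi_G(-m)|$. This establishes the first assertion.

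For the ``in particular'' part I would specialize to $m = 1$. There is a unique $1$-coloring, the constant map $V \rightarrow \{1\}$, and for this coloring no edge $uv$ satisfies $c(u) < c(v)$; hence every orientation is weakly compatible and the compatibility requirement drops out entirely. The pairs $(c,\sigma)$ counted by $|\chi_G(-1)|$ thus reduce to the acyclic orientations $\sigma$ of $G$, which gives the stated count.

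There is essentially no obstacle here, since the full content already resides in Theorem~\ref{thm:col_recip}. The only points that require care are the bookkeeping of the empty index family $\{A_i\}$---so that the directed-path condition is genuinely vacuous and $k$ may be taken to be $0$ in order to accommodate the evaluation at $m = 1$---and the sign identity $(-1)^{|V|}\chi_G(-m) = |\chi_G(-m)|$, which follows at once from nonnegativity of the count.
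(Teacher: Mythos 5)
Your proof is correct and follows exactly the paper's route: the paper obtains this corollary by setting $A = \emptyset$ in Theorem~\ref{thm:col_recip}, noting that $\chi_{G,c} = \chi_G$ and that the directed-path condition becomes vacuous. Your additional care with the sign and the $m=1$ specialization only makes explicit what the paper leaves implicit.
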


\begin{proof}[Proofs]
    First observe that we may assume that no two vertices of $A$ are assigned
    the same color by $c$. Indeed, assume that  $c(a) = c(b)$ for some $a,b
    \in A$. If $ab$ is an edge of $G$, then no proper coloring can extend $c$
    and $\chi_{G,c} \equiv 0$. Moreover, in any orientation of $G$ there is a
    directed path between $a$ and $b$.  If $ab \not\in E$, let $G_{ab}$
    be obtained from $G$ by identifying $a$ and $b$. Then $c$ descends to a
    partial coloring $c_{ab}$ on $G_{ab}$ and it is easy to see
    that there is a bijective correspondence between extensions of size $m$ of
    $c$ and $c_{ab}$. As for acyclic orientations, note that an acyclic
    orientation of $G$ yields an acyclic orientation of $G_{ab}$  if
    and only if there is no directed path between $a$ and $b$. So, henceforth
    we assume that $c: A \rightarrow [k]$ is injective.

    Let $G^\prime$ be the \Emph{suspension} of $G$, that is, the graph $G$
    with two additional vertices $\hat{0}, \hat{1}$ that are connected to all
    vertices of $G$. For $m \ge k$, let us consider all extensions of $c$ to proper colorings
    $\widehat{c} : V^\prime \rightarrow \{0,1,\dots,m+1\}$ 
    such that $\widehat{c}(\hat{0}) = 0$ and $\widehat{c}(\hat{1}) = m+1$. Every such coloring
    $\widehat{c}$ gives rise to a unique compatible acyclic orientation
    $\sigma$ by directing every edge along its color gradient. By definition,
    $\hat{0}$ is a source and $\hat{1}$ is a sink.  The acyclicity of $\sigma$
    implies that we can define a partially ordered set $P_\sigma$ on
    $V^\prime$ by setting $u \preceq_{P_\sigma} v$ if there is directed path
    from $u$ to $v$.  Extending $A$ to $A^\prime = A \cup \{\hat{0},\hat{1}\}$
    and $c$ to $c^\prime_{m}$ by 
    \[
    c^\prime_m(a) \ =  \
    \begin{cases} 
        0, & \text{if } a = \hat{0},\\
        m+1, & \text{if } a = \hat{1}, \text{ and}\\
        c(a), & \text{otherwise},
    \end{cases}
    \]
    it follows that every proper coloring $\widehat{c}$ of $G^\prime$ that
    extends $c^\prime_m$ and induces $\sigma$ is a strict order
    preserving map $\widehat{c} : P_\sigma \rightarrow \{0,1,\dots,m+1\}$ extending 
    $c^\prime_m$ and vice versa. By Theorem~\ref{thm:reciprocity} 
    \begin{equation}\label{eqn:col_ext}
        \chi_{G,c}(m) \ = \ \sum_\sigma
        (-1)^{|V\setminus A|}\Omega_{P_\sigma,A^\prime}(-c^\prime_m)
    \end{equation}
    where the sum is over all acyclic orientations of $G^\prime$ such that
    for every $a,b \in A^\prime$ there is no directed path from $a$ to $b$
    whenever $c(a) > c(b)$. This shows that $\chi_{G,c}(m)$ is a sum of
    polynomials in $m$ with positive leading coefficients. For $m$ sufficiently
    large, there is an extension of $c$ such that every vertex $V \setminus A$
    gets a color $>k$. For the corresponding poset $P_\sigma$ the summand $\Omega_{P_\sigma,A^\prime}(-c^\prime_m)$ is of
    degree $|V|-|A|$ in $m$ which completes the proof of
    Theorem~\ref{thm:col_ext}.

    Let $A^\prime = \{ \hat{0} = a_0, a_1, \dots, a_{r-1},a_r = \hat{1} \}$ so
    that $i < j$ implies $c^\prime_m(a_i) < c^\prime_m(a_j)$. That is,
    $c^\prime_m$ is a strictly order preserving map for the chain $A^\prime$
    with $c^\prime_m(\hat{0})=0$ and $c^\prime_m(\hat{1})=m+1$.  Hence, we can
    consider the right hand side of \eqref{eqn:col_ext} as a polynomial in $(0
    = c_0 < c_1 < c_2 < \cdots < c_r = m)$.  However, the number of proper
    extensions of $c$ is independent of the actual values of $c : A
    \rightarrow [k]$. Indeed, if $d : A \rightarrow [k]$ is a different
    injective partial coloring, then the permutation $\pi : [k] \rightarrow
    [k]$ that takes $c$ to $d$ extends to a relabeling on every extension of
    $c$ to $d$. It follows that the right hand side of \eqref{eqn:col_ext} is
    a polynomial independent of $c_1,\dots,c_{r-1}$ and
    \[
        (-1)^{|V\setminus A|}\chi_{G,c}(-m) 
        \ = \ \sum_\sigma \Omega_{P_\sigma,A^\prime}(-c^\prime_{-m})
        \ = \ \sum_\sigma \Omega_{P_\sigma,A^\prime}(c^\prime_{m-2} - \chi_A)
    \]
    where $\chi_A : A \rightarrow \{0,1\}$ is the characteristic
    function on $A$.
    Every summand is the number of order preserving maps $P_\sigma
    \rightarrow \{0,1,\dots,m-1\}$ extending $c^\prime_{m-2} - \chi_A$.
    Translating back, this is exactly the number of pairs of (not necessarily
    proper) extensions $\widehat{c}$ of $c^\prime_m$ and a weakly compatible
    acyclic orientation $\sigma$ which yields Theorem~\ref{thm:col_recip}.
    As the right hand side of \eqref{eqn:col_ext} is independent of
    $c_1,\dots,c_{r-1}$ we get that \[
    (-1)^{|V\setminus A|}\chi_{G,c}(-1) 
        \ = \ \sum_\sigma \Omega_{P_\sigma,A^\prime}(-c^\prime_{-1})
        \ = \ \sum_\sigma \Omega_{P_\sigma,A^\prime}(\mathbf{0})
    \]
    Here every summand is one, so the right hand side counts the number of
    acyclic orientations such that for every $a,b \in A$ there is no directed
    path from $a$ to $b$ whenever $c(a) > c(b)$ which proves
    Corollary~\ref{cor:special_acyclic}.
\end{proof}

\begin{example}
Consider the following graph $G$ with $A=\{a,b\}$:

\begin{figure}[h]
\includegraphics[width=1.7cm]{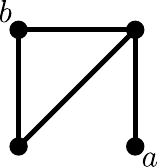}
\end{figure}

Let $c\colon A\rightarrow [k]$ be a coloring. If $c(a)=c(b)$ then for all $m\geq k$ the number of extension of $c$ to a proper $m$-coloring of $G$ is
\[
\chi _{G,c}(m)=(m-1)(m-2)
\]
and $(-1)^2 \chi _{G,c}(-1)=6$ is the number of acyclic orientations of $G$ where there is no directed path between $a$ and $b$:

\begin{figure}[h]
\includegraphics[width=14cm]{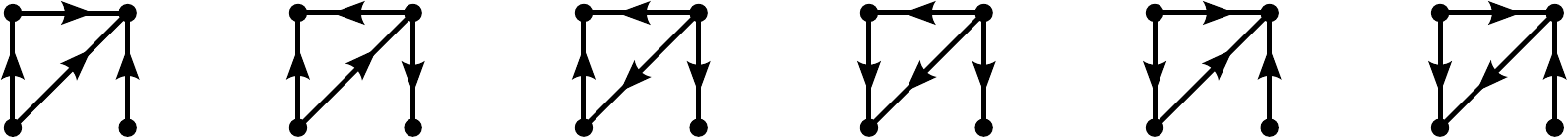}
\end{figure}

If $c(a)> c(b)$ then 
\[
\chi _{G,c}(m)=(m-2)(m-3)+(m-2)=(m-2)^2
\]
and $(-1)^2 \chi _{G,c}(-1)=9$ counts the number of acyclic orientations where
there is no directed path from $a$ to $b$, i.e.\ there are three additional
acyclic orientations:
\begin{figure}[h]
\includegraphics[width=6.0cm]{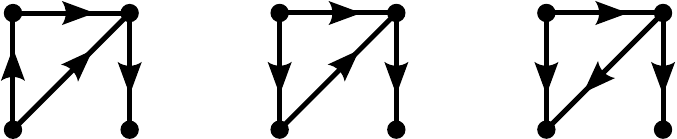}
\end{figure}

The case $c(a) < c(b)$ is clearly analogous.

\end{example}

\bibliographystyle{siam}
\bibliography{MarkedOrderEhrhart}

\begin{thebibliography}{10}

\bibitem{ABS}
{\sc F.~Ardila, T.~Bliem, and D.~Salazar}, {\em {G}elfand--{T}setlin polytopes
  and {F}eigin--{F}ourier--{L}ittelmann--{V}inberg polytopes as marked poset
  polytopes}, J. Combin. Theory Ser. A, 118 (2011), pp.~2454--2462.

\bibitem{BR}
{\sc M.~Beck and S.~Robins}, {\em Computing the continuous discretely:
  {I}nteger-point enumeration in polyhedra}, Undergraduate Texts in
  Mathematics, Springer, New York, 2007.

\bibitem{bliem}
{\sc T.~Bliem}, {\em Chopped and sliced cones and representations of
  {K}ac-{M}oody algebras}, J. Pure Appl. Algebra, 214 (2010), pp.~1152--1164.

\bibitem{DCP}
{\sc C.~De~Concini and C.~Procesi}, {\em Topics in hyperplane arrangements,
  polytopes and box-splines}, Universitext, Springer, New York, 2011.

\bibitem{DLRS}
{\sc J.~A. De~Loera, J.~Rambau, and F.~Santos}, {\em Triangulations:
  {S}tructures for algorithms and applications}, vol.~25 of Algorithms and
  Computation in Mathematics, Springer-Verlag, Berlin, 2010.

\bibitem{fischer06}
{\sc I.~Fischer}, {\em The number of monotone triangles with prescribed bottom
  row}, Adv. in Appl. Math., 37 (2006), pp.~249--267.

\bibitem{FR}
{\sc I.~Fischer and L.~Riegler}, {\em Combinatorial reciprocity for {M}onotone
  {T}riangles}, J. Combin. Theory Ser. A, 120 (2013), pp.~1372--1393.

\bibitem{herzberg}
{\sc A.~M. Herzberg and M.~R. Murty}, {\em Sudoku squares and chromatic
  polynomials}, Notices Amer. Math. Soc., 54 (2007), pp.~708--717.

\bibitem{rota}
{\sc J.~P.~S. Kung, G.-C. Rota, and C.~H. Yan}, {\em Combinatorics: the {R}ota
  way}, Cambridge Mathematical Library, Cambridge University Press, Cambridge,
  2009.

\bibitem{mcmullen73}
{\sc P.~McMullen}, {\em Representations of polytopes and polyhedral sets},
  Geometriae Dedicata, 2 (1973), pp.~83--99.

\bibitem{MRR}
{\sc W.~H. Mills, D.~P. Robbins, and H.~Rumsey, Jr.}, {\em Alternating sign
  matrices and descending plane partitions}, J. Combin. Theory Ser. A, 34
  (1983), pp.~340--359.

\bibitem{stanley70}
{\sc R.~P. Stanley}, {\em A chromatic-like polynomial for ordered sets}, in
  Proc. {S}econd {C}hapel {H}ill {C}onf. on {C}ombinatorial {M}athematics and
  its {A}pplications ({U}niv. {N}orth {C}arolina, {C}hapel {H}ill, {N}.{C}.,
  1970), Univ. North Carolina, Chapel Hill, N.C., 1970, pp.~421--427.

\bibitem{stanley73}
\leavevmode\vrule height 2pt depth -1.6pt width 23pt, {\em Acyclic orientations
  of graphs}, Discrete Math., 5 (1973), pp.~171--178.

\bibitem{stanley81}
\leavevmode\vrule height 2pt depth -1.6pt width 23pt, {\em Two combinatorial
  applications of the {A}leksandrov-{F}enchel inequalities}, J. Combin. Theory
  Ser. A, 31 (1981), pp.~56--65.

\bibitem{stanley86}
\leavevmode\vrule height 2pt depth -1.6pt width 23pt, {\em Two poset
  polytopes}, Discrete Comput. Geom., 1 (1986), pp.~9--23.

\bibitem{ziegler}
{\sc G.~M. Ziegler}, {\em Lectures on {P}olytopes}, vol.~152 of Graduate Texts
  in Mathematics, Springer-Verlag, New York, 1995.

\end{thebibliography}

\end{document}